\setlist[enumerate]{leftmargin=1.5em}
\setlist[itemize]{leftmargin=1.5em}
\definecolor{green}{rgb}{0,0.8,0} 
\newcommand{\Red}[1]{\begingroup\color{red} #1\endgroup} 
\newtheorem{theorem}{Theorem}[section]
\newtheorem{thm}{Theorem}[section]
\newtheorem{corollary}[theorem]{Corollary}
\newtheorem{lemma}[theorem]{Lemma}
\newtheorem{proposition}[theorem]{Proposition}
\newtheorem*{corollary*}{Corollary}
\newenvironment{customthm}[1]
{\innercustomthm}
{\endinnercustomthm}
\theoremstyle{definition}
\theoremstyle{remark}
\newtheorem{remark}[theorem]{Remark}
\numberwithin{equation}{section}
\newcommand{\nrm}[1]{\Vert#1\Vert}
\newcommand{\nnrm}[1]{{\vert\kern-0.25ex\vert\kern-0.25ex\vert #1 
		\vert\kern-0.25ex\vert\kern-0.25ex\vert}}
\newcommand{\supp}{{\mathrm{supp}}\,}
\newcommand{\rd}{\partial}
\newcommand{\nb}{\nabla}
\newcommand{\alp}{\alpha}
\newcommand{\dlt}{\delta}
\newcommand{\eps}{\epsilon}
\newcommand{\varep}{\varepsilon}
\newcommand{\lmb}{\lambda}
\newcommand{\tht}{\theta}
\newcommand{\omg}{\omega}
\newcommand{\bbR}{\mathbb R}
\newcommand{\calK}{\mathcal K}
\begin{document}
\bibliographystyle{plain}
 \title{Filamentation near Hill's vortex}
\author{Kyudong Choi\thanks{Department of Mathematical Sciences, Ulsan National Institute of Science and Technology.  
		E-mail: kchoi@unist.ac.kr} 
	\and In-Jee Jeong\thanks{Department of Mathematical Sciences and RIM, Seoul National University.   E-mail: injee\_j@snu.ac.kr}
}

\date\today

\maketitle

\renewcommand{\thefootnote}{\fnsymbol{footnote}}

\footnotetext{\emph{Key words: incompressible Euler, vortex patch, nonlinear stability, long-time dynamics, filamentation, particle trajectory.} 
\quad\emph{2020 AMS Mathematics Subject Classification:} 76B47, 35Q31 }

\renewcommand{\thefootnote}{\arabic{footnote}}

\begin{abstract}  
	For the axi-symmetric incompressible Euler equations, we prove linear in time filamentation near Hill's vortex: there exists an arbitrary small outward perturbation growing linearly for all times. This is based on combining the recent nonlinear orbital stability obtained in \cite{Choi2020} with a dynamical bootstrapping scheme for particle trajectories.  These results rigorously confirm numerical simulations by Pozrikidis \cite{pozrikidis_1986} in 1986. 
\end{abstract}\vspace{1cm} 

\section{Introduction}
\subsection{axi-symmetric Euler equations and Hill's spherical vortex  } 

Filamentation, or formation of long tails, for vortex rings has been frequently observed both in laboratory experiments and numerical simulations (\cite{MM78,pozrikidis_1986,Fukuya,YeChu,Rozi,RoFu,Niel2,GeHe,KX}). In this work, we rigorously confirm such behavior for certain vortex rings employing the three-dimensional  incompressible Euler equations, given by \begin{equation*}\label{euler_velocity}
	\left\{
	\begin{aligned}
		&\partial_t u+( u\cdot \nabla)u+\nabla p =0, \\
		&\nabla \cdot u = 0,\quad x\in\mathbb{R}^3,\quad t \ge 0,
	\end{aligned}
	\right.
\end{equation*} where $u(t,\cdot):\bbR^3\rightarrow\bbR^3$ is the fluid velocity. 
Taking the curl, the above system can be written in terms of the vorticity ($\omg := \nb\times u$) as \begin{equation}\label{euler_vor}
	\partial_t \omega+( u\cdot \nabla)\omega= ( \omega\cdot \nabla) u. \\
\end{equation} 
We assume that the flow is \textit{axi-symmetric without swirl},  which means that 
$$\omg(x)=\omega^\theta(r,z)e_\theta(\theta),\quad x\in\mathbb{R}^3,\quad (r,\theta,z):\mbox{cylindrical coordinate in }\mathbb{R}^3$$(see Subsection \ref{subsec_axi-sym} for the detail). Then, the equations \eqref{euler_vor} simplify to an active scalar equation  \begin{equation}\begin{split}\label{3d_Euler_eq_intro}
		{\partial_t}\xi+ u \cdot \nabla \xi&=0, 
	\end{split}
\end{equation} where  the corresponding \textit{relative} vorticity $\xi$ is defined by 
$$\xi(t,x)=\xi(t,r,z) :=\frac {\omega^\theta(t,r,z)}{r}.$$ 
 In \eqref{3d_Euler_eq_intro},  the velocity field   $u$ is determined from the relative vorticity $\xi$ at each moment of time by the axi-symmetric Biot-Savart law $u=\mathcal{K}[\xi]$, which is discussed in detail later (see \eqref{form_omega}, \eqref{form_psi}, \eqref{form_u} in Subsection \ref{subsec_axi-sym}). In this setting, the Hill's vortex $\xi_H$ (of unit strength on the unit ball) is simply defined by
\begin{equation}\label{defn_hill_intro}
 {\xi_H(x)}:=\mathbf{1}_{B }(  x),
\end{equation}
 where
$B$ is the unit ball in $\mathbb{R}^3$ centered at the origin. 
Then, as it was shown in the classical 1894 paper of M. Hill \cite{Hill}, 
$$\xi(t,x) =\xi_H(x+t u_\infty )=\mathbf{1}_{B }(  x+t u_\infty )$$ is a traveling wave solution of
\eqref{3d_Euler_eq_intro} with $$  u_\infty
=-W_H e_{x_3}, \quad W_H:=(2/15).$$  That is, 
$$\omega(t,x)=r \mathbf{1}_B(x-tW_He_{x_3})e_\theta(\theta)=-x_2 \mathbf{1}_B(x-tW_He_{x_3})e_{x_1}+x_1 \mathbf{1}_B(x-tW_He_{x_3})e_{x_2}$$
is a solution of  \eqref{euler_vor}. 

\subsection{Main results}

Using the cylindrical coordinate system $(r,\theta,z)$, 
we say that a scalar function $f:\mathbb{R}^3\to\mathbb{R}$ is axi-symmetric
if it has the form of $f(x)=f(r,z)$, 
 and  
a subset $A\subset\mathbb{R}^3$ is  axi-symmetric if the characteristic function
${\mathbf{1}}_A:\mathbb{R}^3\to\mathbb{R}$ is axi-symmetric. Let us now state the main result of this paper: 
\begin{customthm}{A}\label{thm:A}
	There exists an arbitrarily $L^1$-small and localized perturbation from  Hill's vortex such that the corresponding solution exhibits linear in time filamentation for all times. 
\end{customthm} The precise (and quantitative) statement 
will become clearer as the text progresses.
 By a $L^1$-small and localized perturbation from Hill's vortex, we mean that $$\nrm{\xi_{0}-\xi_{H}}_{L^1(\mathbb{R}^3)} \le \eps \quad\mbox{and}\quad  \supp(\xi_{0}-\xi_{H}) \subset \left(B(1+\eps)\setminus B(1-\eps)\right)$$ for some small $\eps>0$, respectively. Here, $B(a), a>0$ is the ball of radius $a$ centered at the origin. Moreover, by linear in time {filamentation}, we simply mean that \begin{equation*}
\begin{split}
	\mathrm{diam} (\supp(\xi(t,\cdot))) \gtrsim t. 
\end{split}
\end{equation*} In the statement of Theorem \ref{thm:A}, we can take the initial data to be a smooth patch, i.e. $\xi_0 = {\mathbf{1}}_{A_0}$ with $C^{\infty}$--smooth boundary $\partial A_{0}$. Since $\xi$ is simply being transported by the flow in \eqref{3d_Euler_eq_intro}, the solution takes the form $\xi(t) = {\mathbf{1}}_{A(t)}$ with some open set $A(t)$. We shall refer to such a solution as a vortex patch. It is known that the patch boundary $\partial A(t)$ remains $C^\infty$--smooth for all times (\cite{Hu,Raymond}). As an immediate consequence of Theorem \ref{thm:A}, we have the following:
\begin{corollary}[Infinite time perimeter growth for patches; {{cf. \cite[Figure 2]{pozrikidis_1986}}}]\label{cor:peri_growth}
	{For any $\dlt'>0$, there exists an axi-symmetric open set  $A_{0}\subset\mathbb{R}^3$ with $C^\infty$-smooth boundary satisfying	
	 $$	B(1-\dlt') \subset A_{0} \subset B(1+\dlt') $$}
	 such that the corresponding patch solution $\xi(t)={\mathbf{1}}_{A(t)}$ of \eqref{3d_Euler_eq_intro} for the initial data $\xi_0={\mathbf{1}}_{A_0}$ satisfies 
	 \begin{equation*}
		\begin{split}
			\mathrm{diam}(A(t)),	\quad \mathrm{perim}(A_{cs}(t)) \gtrsim t,\quad \forall t\geq 0,
		\end{split}
	\end{equation*} where	
$A_{cs}(t)$ is the cross-section
 of the axi-symmetric set  $A(t)$  in the half-space 
 $\mathbb{R}^2_+ =\{(r,z)\in\mathbb{R}^2\,|\, r>0,\quad z\in\mathbb{R}\}$.
\end{corollary}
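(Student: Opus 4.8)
The plan is to read off the diameter bound from Theorem~\ref{thm:A} for free, and to spend the real work upgrading the three–dimensional spreading to a linear lower bound for the planar cross-sectional perimeter, using connectedness together with a little integral geometry. \emph{Set-up.} Given $\delta'>0$, I would apply Theorem~\ref{thm:A} with smallness parameter $\eps=\eps(\delta')$ chosen small, obtaining a smooth patch $\xi_0={\mathbf 1}_{A_0}$ with $\supp\xi_0\subset B(1+\eps)\subset B(1+\delta')$ and, the perturbation being outward, $B(1-\delta')\subset A_0$; the construction also gives $A_0$ connected. Since $\xi$ is transported in \eqref{3d_Euler_eq_intro}, $\xi(t)={\mathbf 1}_{A(t)}$ with $A(t)=\Phi_t(A_0)$ for the volume–preserving homeomorphic flow map $\Phi_t$, so $|A(t)|=|A_0|$, $A(t)$ stays connected, and $\partial A(t)$ stays $C^\infty$ by \cite{Hu,Raymond}. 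As $\supp\xi(t,\cdot)=\overline{A(t)}$, Theorem~\ref{thm:A} gives at once $\mathrm{diam}(A(t))=\mathrm{diam}(\supp\xi(t,\cdot))\gtrsim t$, which is the first assertion.

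\emph{Reduction of the perimeter bound.} I would use two elementary facts. First, for a bounded connected open set $U\subset\mathbb R^2$ with rectifiable boundary and any unit vector $e$, the projection $\pi_e(U)$ is an interval and every line in the direction $e^{\perp}$ meeting $U$ crosses $\partial U$ at least twice, so by the coarea (Cauchy) formula $\mathrm{perim}(U)\ge 2\,|\pi_e(U)|\ge 2\,\mathrm{diam}(U)$. Second, $A_{cs}(t)$ is a bounded planar domain whose boundary is the smooth curve $\partial A(t)\cap\overline{\mathbb R^2_+}$ together with at most a segment of the symmetry axis, so $\mathrm{perim}(A_{cs}(t))$ is finite and the first fact applies; and $A_{cs}(t)$ is connected, being the image of the connected open set $A(t)$ under $(x_1,x_2,x_3)\mapsto(\sqrt{x_1^2+x_2^2},x_3)$ (deleting the axis does not disconnect it, since near an axis point of $A(t)$ the image already fills a relatively open half-disk). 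Hence it suffices to prove $\mathrm{diam}(A_{cs}(t))\gtrsim t$.

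\emph{From the $3$d diameter to the $2$d cross-sectional diameter.} Pick $P_1,P_2\in\overline{A(t)}$ with $|P_1-P_2|\ge c\,t$ and let $p_i=(r_i,z_i)\in\overline{A_{cs}(t)}$ be their cross-sectional coordinates. From $|P_1-P_2|^2\le(r_1+r_2)^2+(z_1-z_2)^2$ one of the two terms on the right is $\ge\tfrac12 c^2t^2$. If $(z_1-z_2)^2\ge\tfrac12 c^2t^2$, then $\mathrm{diam}(A_{cs}(t))\ge|p_1-p_2|\ge|z_1-z_2|\gtrsim t$. If instead $(r_1+r_2)^2\ge\tfrac12c^2t^2$, then (relabelling) $r_1\ge\tfrac{c}{2\sqrt2}t$, and here I would invoke the nonlinear orbital stability of Hill's vortex (the ingredient underlying Theorem~\ref{thm:A}): $A(t)$ stays within small $L^1$-distance of $\mathbf 1_{B+z_\ast(t)e_{3}}$ for some $z_\ast(t)\in\mathbb R$, hence has positive measure inside $\{r\le1\}$ for every $t$, so $\overline{A_{cs}(t)}$ contains a point $q$ with $r(q)\le1$, whence $\mathrm{diam}(A_{cs}(t))\ge r_1-r(q)\ge\tfrac{c}{2\sqrt2}t-1\gtrsim t$. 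Either way $\mathrm{diam}(A_{cs}(t))\gtrsim t$, so $\mathrm{perim}(A_{cs}(t))\gtrsim t$ by the reduction above; for $t$ in a bounded range both lower bounds are trivial, the left sides being bounded below by fixed positive constants.

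\emph{Main obstacle.} The one genuinely delicate point is the last step: a large three-dimensional diameter does not by itself force a large cross-sectional diameter, since the two extremal points may both sit at radius $r\sim t$ with almost coincident $(r,z)$–coordinates, so that their projections need not be far apart. One therefore has to inject independent information pinning some point of the cross-section at bounded radius, and this is exactly what orbital stability supplies, by preventing the bulk of the vortex from escaping radially. The remaining ingredients — propagation of connectedness and $C^\infty$ regularity of the patch, finiteness of $\mathrm{perim}(A_{cs}(t))$, and the planar inequality $\mathrm{perim}\ge2\,\mathrm{diam}$ — are routine.
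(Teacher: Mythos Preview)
Your proof is correct but takes a genuinely different route from the paper's. The paper's argument is much shorter and more direct: it chooses $A_0$ to contain the axis segment $L=\{r=0,\ z\in[-1-\delta'/2,\,1]\}$ and then applies Theorems~\ref{thm:fila-patch} and~\ref{thm:fila-patch-2} (the precise content of Theorem~\ref{thm:A}) to the two endpoints of $L$. Since the axis $\{r=0\}$ is flow-invariant, $\phi(t,L)$ remains a segment on the axis; one endpoint falls behind linearly while the other stays near $\tau(t)+1$, so $\mathrm{length}(\phi(t,L))\gtrsim t$. This segment sits in $\partial A_{cs}(t)$, giving both the diameter and perimeter bounds in one stroke.

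You instead treat Theorem~\ref{thm:A} as a black box delivering only the $3$D diameter growth, then work harder to recover the cross-sectional statement: you invoke the planar inequality $\mathrm{perim}\ge 2\,\mathrm{diam}$ for connected sets, check connectedness of $A_{cs}(t)$, and --- the genuinely nontrivial step --- handle the case where the two extremal $3$D points might have nearly coincident $(r,z)$-coordinates by using orbital stability to anchor a point of $A_{cs}(t)$ at bounded radius. All of this is correct. What the paper's approach buys is that the filamentation in Theorems~\ref{thm:fila-patch}--\ref{thm:fila-patch-2} happens \emph{along the symmetry axis}, so the long segment is already explicitly in the cross-section boundary and no passage from $3$D to $2$D is needed; what your approach buys is modularity --- it would apply verbatim to any filamentation mechanism, axis-aligned or not, as long as orbital stability holds.
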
 
On the other hand, in Theorem \ref{thm:A}, we can take the initial data $\xi_{0}$ to be a $C^{\infty}$-smooth axi-symmetric function in $\bbR^{3}$. (It implies that the vorticity $\omg = \omg^\tht e_\tht = \xi re_\tht$ is a $C^{\infty}$--smooth function in $\bbR^{3}$.) This gives the infinite time growth for the gradient of $\xi$:
\begin{corollary}[Infinite time gradient growth for smooth solutions]
\label{cor:hessian_growth}
		 For any $\dlt'>0$, there exists a $C^{\infty}$-smooth  and compactly supported axi-symmetric non-negative initial data $\xi_{0}$
with 	
$$
\supp(\xi_{0}-\xi_{H}) \subset \left(B(1+\dlt')\setminus B(1-\dlt')\right)
$$
		  such that the associated solution $\xi(t)$ of \eqref{3d_Euler_eq_intro} and the corresponding vorticity $\omg(t)$ of \eqref{euler_vor} satisfy the growth \begin{equation*}
		\begin{split}
			\nrm{ \rd_{r}\xi(t,\cdot)}_{L^{\infty}} , \quad \nrm{\nb^2\omg(t,\cdot)}_{L^{\infty}}\gtrsim  {\sqrt{t}},\quad \forall t\geq 0.
		\end{split}
	\end{equation*}  
\end{corollary}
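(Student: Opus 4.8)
\emph{Overview and reduction.} The plan is to reduce the second–derivative bound for $\omg$ to a first–derivative bound for $\xi$, and then to derive the latter from Theorem~\ref{thm:A} by combining the linear stretching of the trailing axial filament with the degeneracy of the volume element $\ud x=2\pi r\,\ud r\,\ud z$ at the symmetry axis. For the reduction: writing $\omg=\omg^\tht e_\tht$ with $\omg^\tht=r\xi$, in Cartesian coordinates $\omg_2(x)=x_1\,\xi(\sqrt{x_1^2+x_2^2},x_3)$, and expanding in $x_2$ about $x_2=0$ on the ray $\{x_2=0,\ x_1=r>0\}$ gives $\omg_2=r\,\xi(r,x_3)+\tfrac{x_2^2}{2}\,\rd_r\xi(r,x_3)+O(x_2^4)$, hence the pointwise identity $\rd_{x_2}^2\omg_2=\rd_r\xi$ there. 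Since $\{x_2=0,\ x_1>0\}$ meets every orbit of the rotation and $\rd_r\xi(0,z)=0$ by axisymmetry, this yields $\nrm{\nb^2\omg(t,\cdot)}_{L^\infty(\bbR^3)}\ge\nrm{\rd_r\xi(t,\cdot)}_{L^\infty}$, so it suffices to prove $\nrm{\rd_r\xi(t,\cdot)}_{L^\infty}\gtrsim\sqrt t$.

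\emph{Choice of data and the filamentation input.} Given $\dlt'>0$, I would take $\xi_0$ to be a nonnegative $C^\infty$ axisymmetric function with $\supp\xi_0\subset B(1+\dlt')$, lying in the class to which the quantitative form of Theorem~\ref{thm:A} applies, and arranged so that for some fixed $a_0>0$ there is a nondegenerate interval $I_0\subset(1,1+\dlt')$ on the symmetry axis, lying just beyond the rear pole of $B$, on which $\xi_0(0,\cdot)\ge a_0$ --- concretely, $\xi_0$ a tight $C^\infty$ regularization of $\mathbf 1_{B}$ together with a thin axial ``finger'' running from the rear pole of $B$ out to radius $\approx1+\dlt'/2$, which keeps $\nrm{\xi_0-\xi_H}_{L^1}$ as small as required. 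Let $\Phi_t$ be the flow map of \eqref{3d_Euler_eq_intro}. The axis $\{r=0\}$ is flow–invariant (no radial velocity, no swirl), and $\xi$ is transported, so $\{z:\xi(t,0,z)\ge a_0/2\}$ is the image of the fixed set $\{z_0:\xi_0(0,z_0)\ge a_0/2\}\supseteq I_0$ under the one–dimensional axial flow, and in particular contains the interval $[z_1^\ast(t),z_2^\ast(t)]$ equal to the image of $I_0$. The key input from (the quantitative form of) Theorem~\ref{thm:A} is that this ``finger'' is drawn into the trailing rear filament along the axis and stretched linearly:
\begin{equation}\label{eq:plan-stretch}
z_2^\ast(t)-z_1^\ast(t)\ \ge\ c\,t\qquad(t\ge 1)
\end{equation}
for some $c>0$; this is the ``$\mathrm{diam}(\supp\xi(t))\gtrsim t$'' mechanism of Theorem~\ref{thm:A}, read off along the axis for material carrying vorticity $\ge a_0/2$.

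\emph{Conclusion via incompressibility.} Since $\nb\cdot u=0$, $\Phi_t$ preserves Lebesgue measure, so $|\supp\xi(t)|=|\supp\xi_0|\le|B(1+\dlt')|=:C_0$. For $z\in(z_1^\ast,z_2^\ast)$ let $\rho(z)\in(0,\infty)$ be the largest radius with $\xi(t,\cdot,z)>0$ on $[0,\rho(z)]$; it is finite since $\supp\xi(t)$ is compact, and positive since $\xi(t,0,z)\ge a_0/2$. The solid of revolution $\{\,x_1^2+x_2^2<\rho(x_3)^2,\ x_3\in(z_1^\ast,z_2^\ast)\,\}$ lies in $\{\xi(t)>0\}$, hence $\pi\int_{z_1^\ast}^{z_2^\ast}\rho(z)^2\,\ud z\le C_0$; with \eqref{eq:plan-stretch} there is $z_\#\in(z_1^\ast,z_2^\ast)$ with $\rho(z_\#)\le M_0 t^{-1/2}$, $M_0:=\sqrt{C_0/(\pi c)}$. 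Finally $\xi(t,0,z_\#)\ge a_0/2$ while $\xi(t,\rho(z_\#),z_\#)=0$, so
\begin{equation}
\tfrac{a_0}{2}\ \le\ \int_0^{\rho(z_\#)}\big|\rd_r\xi(t,r,z_\#)\big|\,\ud r\ \le\ \rho(z_\#)\,\nrm{\rd_r\xi(t,\cdot)}_{L^\infty}\ \le\ M_0\,t^{-1/2}\,\nrm{\rd_r\xi(t,\cdot)}_{L^\infty},
\end{equation}
which gives $\nrm{\rd_r\xi(t,\cdot)}_{L^\infty}\ge\frac{a_0}{2M_0}\sqrt t$ for $t\ge1$ (for $t<1$ one adjusts the implicit constant, using that $\nrm{\rd_r\xi(t,\cdot)}_{L^\infty}>0$ and depends continuously on $t$); combined with the reduction, both asserted bounds follow.

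\emph{Main obstacle.} The crux is \eqref{eq:plan-stretch}: upgrading Theorem~\ref{thm:A} from ``the support has diameter $\gtrsim t$'' to ``a nondegenerate interval of \emph{on–axis} material carrying a fixed amount of vorticity is stretched to length $\gtrsim t$''. This requires entering the trajectory–bootstrapping proof of Theorem~\ref{thm:A} rather than using its statement as a black box, together with the fact that the outward perturbation may be placed at the rear pole so that its material enters (and is stretched along) the rear axial tail; one must also exclude the scenario in which this material merely translates coherently, which ``filamentation'' is meant to preclude but which the bare diameter bound does not. The axis is also what produces the exponent $\tfrac12$ rather than $1$: the conserved volume $\ud x=2\pi r\,\ud r\,\ud z$ degenerates there, so it controls $\int\rho(z)^2\,\ud z$ and hence a radial width $\sim t^{-1/2}$; a filament remaining at positive radius would have width $\sim t^{-1}$ and would give linear growth instead.
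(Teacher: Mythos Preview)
Your overall architecture --- reduce $\nrm{\nb^2\omg}_{L^\infty}$ to $\nrm{\rd_r\xi}_{L^\infty}$, produce a long axial interval on which $\xi$ is bounded below, use conservation of $|\supp\xi|$ to find a cross-section of width $\lesssim t^{-1/2}$, and apply the mean value theorem --- is precisely the paper's approach, and your Cartesian reduction $\rd_{x_2}^2\omg_2=\rd_r\xi$ on $\{x_2=0,\,x_1>0\}$ is correct (and arguably tidier than the paper's identity via $\rd_{rr}\omg^\tht$).

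The real gap is the one you flag yourself, but your proposed $I_0$ makes it worse than you seem to realize. You place $I_0$ entirely in the finger, i.e.\ entirely \emph{outside} $B$ (there is also a front/rear confusion: $I_0\subset(1,1+\dlt')$ in the $z$-variable is in front, whereas the filamentation mechanism of Theorem~\ref{thm:fila-patch} operates behind, for $z_0<-1$). For two axial points both behind the ball, the Hill dynamics in the moving frame reads $\dot y=W(1-y^{-3})$ with $y=-z>1$; both trajectories satisfy $y(t)=Wt+c+o(1)$ for constants $c$ depending on the initial point, so their separation stays \emph{bounded}. That is, the ``coherent translation'' scenario you worry about actually occurs for this choice of $I_0$, and your linear stretching claim \eqref{eq:plan-stretch} fails outright for the exact Hill flow; the $O(\varep^{1/2})$ velocity perturbation can at best produce separation of order $\varep^{1/2}t$, not $ct$ with $c$ independent of $\varep$.

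The paper's fix is simple: track an interval that \emph{straddles} the rear pole. Concretely it takes $\xi_0(0,z)=1$ on $[-1-\dlt'/2,\,0]$ and follows the two endpoints. The point $z_0=-1-\dlt'/2$ falls behind by Theorem~\ref{thm:fila-patch}, with $\phi^z(t)<(W-\mu)t-1$; the interior point $z_0=0$ is, by Theorem~\ref{thm:fila-patch-2} case~1, carried to within $O(\varep^{1/2})$ of $\tau(t)+1\approx Wt+1$. Their separation is then $\gtrsim\mu t$, which is exactly the stretching you need. With that single modification to the data and to which interval you track, your remaining steps go through verbatim.
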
 To the best of our knowledge, infinite time perimeter and derivative growth for the incompressible Euler equations on $\bbR^{3}$ have not been obtained before, with compactly supported and smooth vorticity
(\textit{cf}. see \cite{CJ} for perimeter growth in $\mathbb{R}^2$ up to finite time).
\\ 

A key step in the proof of Theorem \ref{thm:A} is to establish that small perturbations of Hill's vortex travel at about the same speed as Hill's vortex  for \textit{all} times. 

\begin{customthm}{B}\label{thm:B} We have the following statements.\\  \ \\ 
	(I) For any $\varep>0$, there exists $\dlt=\dlt(\varep)>0$ such that the following holds. Given  axi-symmetric $\xi_{0}$ satisfying $\xi_{0}, r\xi_{0} \in L^{\infty}(\bbR^3)$, $\xi_0\geq 0$, 
	 and \begin{align*}
		\|\xi_0-\xi_H\|_{L^1\cap L^2(\mathbb{R}^3)}
		+\|r^2(\xi_0-\xi_H)\|_{L^1(\mathbb{R}^3)} 
		\leq \delta,
	\end{align*} the corresponding solution $\xi(t)$ of \eqref{3d_Euler_eq_intro} 
	satisfies
	\begin{align}\label{orb_est}
		\|\xi(\cdot+\tau(t) e_{z},t)-\xi_H\|_{L^1\cap L^2(\mathbb{R}^3)}
		+\|r^2(\xi(\cdot+\tau(t) e_{z},t)-\xi_H)\|_{L^1(\mathbb{R}^3)} \leq \varepsilon,\quad t\geq 0
	\end{align}  for some shift function $\tau:[0,\infty)\to\mathbb{R}$ satisfying
	$\tau(0)=0$. 	Here, $\|\cdot\|_{L^1\cap L^2}$ means $\|\cdot\|_{L^1}+\|\cdot\|_{L^2}$.
\ \\
	
	\noindent (II) 
	There exists an absolute constant $\varepsilon_0 >0$ such that, for each $M>0$, 
	if   $\|\xi_0\|_{L^\infty}\leq M$ and if $\varepsilon\in(0,\varepsilon_0)$, then the shift function $\tau$ above satisfies   
	 \begin{equation}\label{main_est}
		\begin{split}
			|\tau(t)-W_Ht| \le C_M {\varep^{\frac12}(t +1)},\quad t\geq 0,
		\end{split}
	\end{equation}  where $C_M>0$ is a constant depending only on $M$. 
\end{customthm} 

{The orbital stability statement $(I)$ above was already proved in \cite{Choi2020} based on the classical variational method (in Eulerian description). Our contribution lies on  the key estimate \eqref{main_est} of  $(II)$ based on a dynamical  bootstrap argument on particle trajectories (\textit{i.e.}  Lagrangian approach).\\

As a last application, we prove exponential in time growth of $\nrm{\omg}_{L^\infty}$ for finite (but arbitrarily large) time:
\begin{corollary}[Growth in vorticity for smooth solutions]\label{cor:vor:grow}
 For each $L>1$, there exists a $C^{\infty}$--smooth  and compactly supported initial data $\omega_{0}=\omega_{0}^\theta e_\theta$ satisfying \begin{equation}\label{cor_vor_grow_concl}
		\begin{split}
	\omega_{0}^\theta\geq 0,\quad	 \nrm{\omg_0}_{L^{\infty}(\mathbb{R}^3)}\leq 1, \quad\mbox{and}\quad   {\sup}_{t\in[0,c\log(L)]}\nrm{ \omg(t,\cdot)}_{L^{\infty}(\mathbb{R}^3)}\geq L,
		\end{split}
	\end{equation}    where $\omega(t)$ is the corresponding smooth solution of \eqref{euler_vor}  and $c>0$ is an absolute constant. 
\end{corollary}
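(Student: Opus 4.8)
\medskip
\noindent\textit{Proof plan for Corollary~\ref{cor:vor:grow}.}
The starting point is the elementary identity $\abs{\omg(t,x)}=\abs{\omg^{\tht}(t,x)}=r\abs{\xi(t,x)}$ combined with the transport equation \eqref{3d_Euler_eq_intro}: if $X(t)$ is a fluid trajectory then $\abs{\omg(t,X(t))}=r_{X}(t)\,\abs{\xi_{0}(X(0))}$, where $r_{X}(t)$ is the cylindrical radius of $X(t)$. So it suffices to exhibit a single particle carrying a nonzero value of $\xi_{0}$ whose cylindrical radius is amplified by a factor $\ageq L$ within time $\aeq\log L$; exponential amplification on a logarithmic time scale is precisely the behaviour near a hyperbolic stagnation point, and Hill's vortex supplies one. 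In the co-moving frame, the pole of the spherical boundary that faces the incoming exterior stream is a hyperbolic stagnation point at which, in the $(r,z)$ cross-section, $\dot r\approx\alp r$ and $\dot z\approx-2\alp(z-z_{\mathrm{pole}})$ for a strain rate $\alp>0$ read off from the classical Hill stream function. Since the system \eqref{3d_Euler_eq_intro} is scale-covariant, Theorems~\ref{thm:A}--\ref{thm:B} hold for a Hill's vortex of any radius $R$ and strength $s$ with all norms and constants rescaled accordingly; we take $R\aeq L$ and $s\aeq 1/L$, so that the background vorticity $r\,s\mathbf{1}_{B_{R}}$ has $L^{\infty}$ norm $sR\le\tfrac12$, while the strain rate at the pole is $\alp\aeq sR\aeq 1$, an \emph{absolute} constant. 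As initial data we take a $C^{\infty}$ mollification of $s\mathbf{1}_{B_{R}}$ plus a small smooth vorticity bump $b$, supported in a ball of radius $\rho_{b}$ centered just outside that pole, off the axis at cylindrical radius $r_{0}\aeq 1$, with $\xi$-value $\aeq 1$. Then $\nrm{\omg_{0}}_{L^{\infty}}\le 1$, $\nrm{\xi_{0}}_{L^{\infty}}\aeq 1$ is an absolute constant, and every perturbation norm appearing in Theorem~\ref{thm:B} is $\aleq\rho_{b}$, hence as small as desired.

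With this data, Theorem~\ref{thm:B} (in the rescaled form; here $M\aeq 1$ is absolute, so $C_{M}$ is absolute) produces, once $\rho_{b}$ is small, a shift $\tau(t)=sR^{2}W_{H}t+O(\varep^{1/2}(t+1))$ such that $\xi(t,\cdot)$ stays within $\varep$, in the Theorem~\ref{thm:B} norm, of the translate $s\mathbf{1}_{B_{R}}(\cdot+\tau(t)e_{z})$ for \emph{all} $t\ge 0$, with $\varep$ arbitrarily small; moreover $\nrm{\xi(t)}_{L^{\infty}}=\nrm{\xi_{0}}_{L^{\infty}}\aeq 1$ is conserved. By the axisymmetric Biot--Savart estimates --- interpolating the $L^{2}$ control against the (conserved, bounded) $L^{\infty}$ norm near the pole, and handling the far field via the weighted $L^{1}$ control --- closeness of $\xi$ to a translate of the Hill profile yields $\nrm{u(t,\cdot)-u_{H}^{R,s}(\cdot+\tau(t)e_{z})}_{L^{\infty}(\Omg_{R})}\aleq\varep^{\beta}$ for some $\beta>0$, uniformly in $t$, on a fixed neighbourhood $\Omg_{R}$ of the pole, where $u_{H}^{R,s}$ is the \emph{exact} Hill velocity. (Since only the finite window $[0,c\log L]$ is used, one could instead invoke continuous dependence of the global smooth axisymmetric-without-swirl flow on this interval, at the cost of letting the permitted size of $\rho_{b}$ depend on the fixed number $L$.)

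Let $X(t)$ be the trajectory through the centre of $b$; since $\xi$ is transported, $\xi(t,X(t))=\xi_{0}(X(0))\aeq 1$ for all $t$. In the frame translating with $\tau(t)e_{z}$, as long as $X$ stays in $\Omg_{R}$ the radial equation reads $\dot r_{X}=\bigl(u_{H}^{R,s}\bigr)^{r}(X)+O(\varep^{\beta})=\alp\,r_{X}+O(\varep^{\beta})+O(r_{X}^{2}/R)$, together with the companion contracting equation for the axial component keeping $X$ near the pole. Because $r_{0}\aeq 1$ lies far above the $O(\varep^{\beta})$ threshold (with $\varep$ as small as we wish), this forces $\dot r_{X}\ge\tfrac{\alp}{2}r_{X}$ while $r_{X}\aleq R$, so $r_{X}$ grows from $\aeq 1$ to $\aeq R$ by a time $t_{*}\aeq\alp^{-1}\log R\aeq\log L$, and $X$ stays in the linear regime throughout $[0,t_{*}]$. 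Hence $\abs{\omg(t_{*},X(t_{*}))}=r_{X}(t_{*})\,\abs{\xi_{0}(X(0))}\ageq R\aeq L$; after fixing the numerical constants so that this lower bound is genuinely $\ge L$ with $t_{*}\le c\log L$ for an absolute $c$, and recalling that $\omg_{0}=\omg_{0}^{\tht}e_{\tht}$ is $C^{\infty}$ and compactly supported thanks to the mollification, \eqref{cor_vor_grow_concl} follows.

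The principal obstacle is the trajectory control \emph{over a time window of length $\aeq\log L$ that grows with $L$}: the velocity perturbation near the pole, although arbitrarily small in $L^{\infty}$, is amplified by a factor $\aeq e^{\alp t_{*}}\aeq L^{O(1)}$ along the flow, so one must carry the cumulative smallness carefully and choose $\rho_{b}$ (equivalently $\varep$) sufficiently small \emph{as a function of the fixed} $L$, while simultaneously keeping $\nrm{\omg_{0}}_{L^{\infty}}\le 1$ and the $M\aeq 1$ normalization that makes $C_{M}$ absolute --- this is exactly where the all-times conclusion of Theorem~\ref{thm:B}, rather than mere finite-time continuous dependence, gives the cleanest accounting. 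A secondary, purely computational point is to extract from the explicit Hill stream function the hyperbolicity of the upstream pole --- the strain-rate lower bound $\alp>0$ and the size $\aeq R$ of the neighbourhood on which the linearization is valid --- and to verify the elementary ODE estimate that a trajectory starting just outside the pole, slightly off the axis, escapes to $r\aeq R$ in time $\aeq\alp^{-1}\log(R/r_{0})$.
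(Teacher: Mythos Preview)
Your approach is, after undoing the rescaling $x\mapsto x/R$, $\xi\mapsto\xi/s$ (which, with $sR\aeq 1$, leaves the time variable unchanged), exactly the paper's proof. The paper works with the \emph{unit} Hill vortex, places a $\xi$-peak of height $M/2\aeq L$ at a point on the boundary sphere with cylindrical radius $r_0=1/(10L)$ near the front pole, and tracks that particle along the sphere via a finite-time trajectory lemma (Lemma~\ref{lem_finite_gro}, itself derived from Theorem~\ref{thm:B}) until $r=1/10$, at which moment $\abs{\omg}=r\abs{\xi}\ge L$. Your ``big Hill of radius $L$, bump of $\xi$-height $\aeq 1$ at $r_0\aeq 1$'' is the same initial data viewed through that scaling, and the hyperbolic mechanism at the front pole is identical; your observation that $sR\aeq1$ makes the strain rate $\alp$ an absolute constant is precisely why $c$ in $c\log L$ is absolute.

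The one place your exposition goes wrong is the claim that in the rescaled setting $M\aeq1$ and hence $C_M$ is absolute. Theorem~\ref{thm:B} is stated only for the \emph{unit} Hill vortex; applying it to $s\mathbf{1}_{B_R}$ requires the rescaling $\tilde\xi(\tilde x)=\xi(R\tilde x)/s$, under which the $L^\infty$ bound becomes $\tilde M=\nrm{\xi_0}_{L^\infty}/s\aeq L$, not $1$. (Equivalently: if you re-derive Theorem~\ref{thm:B} directly for the big Hill vortex, the constants pick up $R,s$-dependence.) So $C_M$ is \emph{not} absolute here --- it depends on $L$, exactly as in the paper's own proof, where $M=20L$. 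This is not fatal, since you correctly acknowledge at the end that $\rho_b$ (hence $\varep$) must be chosen small as a function of the fixed $L$; but the advertised advantage of the rescaled picture (``$C_M$ absolute, so the all-times Theorem~\ref{thm:B} gives the cleanest accounting'') is illusory, and your argument is no cleaner than the paper's.
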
 
} 

\subsection{Related works}\label{subsec:related}

\medskip

\noindent \textbf{Experiments and numerical simulations}. Axi-symmetric regions of fluid with {highly concentrated} vorticity can be observed in a variety of settings including jets and atmospheric plumes. Especially at high Reynolds number, such vortex structures are effectively modelled by Hill's vortex (\cite{WePa,harper_moore_1968,Turner}). Linearized dynamics near Hill's vortex was studied by Moffatt-Moore in \cite{MM78}, and Pozrikidis carried out detailed numerical simulations for axi-symmetric perturbations from Hill's vortex (\cite{pozrikidis_1986}).  The simulations show that an initially prolate perturbation develops into a long tail (see \cite[Figure 2]{pozrikidis_1986}), whereas an oblate perturbation catches up to the front boundary of the vortex (see \cite[Figure 7]{pozrikidis_1986}). {This behavior is rigorously confirmed in the current work; see the statements of Corollary \ref{cor:peri_growth} and \ref{cor:ins_growth} which correspond to Figure 2 and 7 in \cite{pozrikidis_1986}, respectively.} Further numerical computations demonstrating filamentation near the Hill's vortex can be found in \cite{Fukuya,YeChu,Rozi,RoFu}.

\medskip

\noindent  \textbf{Nonlinear orbital stability}. 
	The nonlinear \textit{orbital} stability (stability up to a \textit{translation}) obtained in \cite{Choi2020} is based on the variational principles suggested by Benjamin \cite[Section I]{Ben76}  in 1976. The variational setting became more  concrete     in Friedman-Turkington \cite{FT81}, who showed existence of a maximizer which is  a vortex ring. Independently, Amick-Fraenkel \cite{AF86} proved uniqueness of Hill's vortex among vortex rings.  Wan \cite{Wan88} in 1988 showed that Hill’s vortex  is a nondegenerate local maximum of the kinetic energy under certain constraints. Very recently, \cite{Choi2020} proved that Hill's vortex is Lyapunov orbitally stable in $(L^1+L^2+\mbox{impulse})$-norm. The key idea is to connect  the above classical results (existence \cite{FT81}, uniqueness \cite{AF86}) as well as to employ concentrated compactness argument due to \cite{Lions84a} and a recent existence result \cite{NoSe} of (renormalized sense) weak solutions  (see also \cite{BNL13},  \cite{AC2019}).\\
	
	In solitary waves,  
	  an orbital stability   appeared first in \cite{Ben72}, \cite{Bona75} for the   generalized  KdV equation (see the survey paper \cite{Tao09} for related references).    
   Stability up to a translation  can be found even  for inviscid/viscous shocks in   conservation laws. For instance,  we refer to the classical paper \cite {MR0101396} for asymptotic stability up to a translation  in $L^1$-setting (also see \cite{MR2217605} and references therein).      In $L^2$-setting, we mention     \cite{LegerV},   \cite{Kang-V-NS20}  for   systems  such as
  compressible Euler/Navier-Stokes systems  
 and \cite{ckkv2019} for certain Keller-Segel type systems.   
 
\medskip

\noindent \textbf{Vortex patch dynamics: growth of the support}.  { 
When considering \textit{non-negative} compactly supported relative vorticity in 3d axi-symmetric setting \eqref{3d_Euler_eq_intro}, 
 vorticities must be  confined in the region 
$\{r\leq C t^{1/4}\log t\}$, which was proved in \cite{Maffei2001}. Later,  \cite{Ren2008} showed that  vorticities cannot move in the opposite direction ($-z$ direction) faster than the rate $(t\log t)^{1/2}$. In sum, such vorticities  are confined in the region
$$
\{
r\leq C t^{1/4}\log t,\quad -C(t\log t)^{1/2}\leq z\leq Ct
\}
$$ The ideas of   confinement  for such \textit{one-signed} vorticities were going back to  \cite{M94}, \cite{ISG99}, \cite{Serfati_pre} in the planar case $\mathbb{R}^2$. The sign-condition makes 
the conservation of impulse $\int_{\mathbb{R}^2}|x|^2\omega dx$ to   control confinement of vorticities. It is notable that there is an example of linear growth when dropping the condition (see  \cite[Sec. 3]{ISG99}).
For confinement on other   two-dimensional  domains, we refer to \cite{ILL2003}, \cite{ILL2007}, \cite{CD2019}.}
 
\medskip

\noindent \textbf{Growth of vorticity in three-dimensional Euler equations}. Compared with the two-dimensional case, there are fewer rigorous results pertaining to the growth of the vorticity in some norm for the three-dimensional Euler equations. Infinite time linear growth of $\nrm{\nb^2\omg}^2_{L^\infty}$ and arbitrarily long but finite time exponential growth of $\nrm{\omg}_{L^\infty}$ are achieved in the current work for $C^\infty_c(\bbR^3)$--vorticity. In the presence of a physical boundary, obtaining growth is strictly simpler since the vorticity may not vanish on the boundary, leading to a stable growth mechanism; see \cite{EJE,EJO,Do,ChenHou} and the references therein. When the boundary is absent but if the domain is periodic with respect to an axis, then it is well-known that one can use the  $2\frac{1}{2}$--dimensional flow construction to obtain linear in time growth of the 3D vorticity (see \cite{EM1,MaB,BT,JY}). In the case of $\bbR^{3}$, a recent groundbreaking work of Elgindi \cite{Elgindi19} achieved the finite-time singularity formation for $C^\alp$--vorticity, in the class of axi-symmetric no-swirl flows. In the work \cite{Elgindi19}, it is essential that the vorticity is of limited smoothness. Still, it is expected that the ``hyperbolic flow scenario'' utilized in \cite{Elgindi19} is responsible for an infinite time growth of $\nrm{\omg}_{L^{\infty}}$; see \cite{Child,ChildGil}. 

\subsection{Ideas of the proof}\label{subsec:ideas}

Let us explain the main ingredients of the proof.  
\begin{itemize}
	
	\item \textbf{Traveling speed of the perturbation (Theorem \ref{thm:B} $(II)$).}
	The orbital stability from \cite{Choi2020} (or just Theorem \ref{thm:B} $(I)$) does not say where   the orbit element at each time is, since the position $\tau(t)$ in \eqref{orb_est} is \textit{implicitly} given from a contradiction argument.\\

In this paper, we overcome this weak point by applying a bootstrap argument to obtain \eqref{main_est}. We take two bootstrap hypotheses for each time $t_0$:\\
One is for the difference of the shift function in time $$|\tau(t) - (\tau(t_0) + {W_H}(t-t_0))|$$ while the other is for the particle trajectory $\phi(t,(t_0,{B}^{\tau(t_0)}))$ initiated from the ball $$B^{\tau(t_0)}=\{
|x-\tau(t_0)e_{x_3}|<1
\}.$$ In Proposition \ref{prop:travel-speed_hill}, we prove that they are well-controlled for short time by assuming smallness at time $t_0$. To finish the proof, a technicality appears since a shift function satisfying \eqref{orb_est} is not uniquely determined. For instance, arbitrary but small perturbations of a given shift satisfying \eqref{orb_est} can also satisfy the required stability \eqref{orb_est}. Such perturbed shift functions need not be   continuous   at all.
 This difficulty is removed by noting that   discontinuity (if exists) is limited to at most small jumps (Lemma \ref{lem:no_high_jump}).

	\item \textbf{Dynamics of perturbations}.
	Once we show that perturbation behaves like   Hill's vortex (due to \eqref{main_est}), we freely use the explicit information on the velocity of   Hill's vortex \eqref{vel_explicit}. In particular, a fluid particle in a perturbation which was initially behind   Hill's vortex moves further and further away from the vortex core. It produces a long tail (filamentation), which gives Theorem \ref{thm:A} with the following consequences:
\begin{itemize}

			\item {perimeter growth in infinite time (Corollary \ref{cor:peri_growth})},
			\item {growth in Hessian of smooth vorticity in infinite time (Corollary \ref{cor:hessian_growth})}.
\end{itemize}
On the other hand, a particle in a perturbation which was very close to the front of the core   gives 
\begin{itemize}
	\item {growth in  vorticity up to finite time (Corollary \ref{cor:vor:grow})}. \\
\end{itemize} In the proof of this corollary, the relative vorticity $\xi(t)$ producing the growth \eqref{cor_vor_grow_concl} is chosen to satisfy 
$$\|\xi(t)\|_{L^\infty}=\|\xi_0\|_{L^\infty}\sim L$$ while
$$		 		\|\xi_0-\xi_H\|_{L^1\cap L^2(\mathbb{R}^3)}		 
+\|r^2(\xi_0-\xi_H)\|_{L^1(\mathbb{R}^3)} 
\quad\mbox{is sufficiently small.}
$$ Roughly speaking, we set a peak of order $L$ on the initial data $\xi_0$ near 
the front point i.e. $r=0, z=1$. Then, this peak climbs up the front surface of Hill's vortex at least for a finite time. Recalling $\omega=\omega^\theta e_\theta= r \xi e_\theta$, we obtain growth in the vorticity vector $\omega$; see Figure \ref{fig1}.
\end{itemize}

\begin{remark}

	We remark that the main estimate \eqref{main_est} can  be   proved  in a simpler argument if we are allowed to assume further that the initial data $\xi_0$ is supported in $\{|z|\leq Z_0\}$ for some constant $Z_0>0$. The corresponding estimate, of course, depends on the parameter $Z_0$. The key idea would be to use the identity on the the speed of the center of mass in $z$-direction:
	$$
	\frac{d}{dt}\int_{\mathbb{R}^3} z\xi (t,x)\,dx=\int_{\mathbb{R}^3} u^z(t,x)\xi (t,x)\,dx.
	$$  
\end{remark}

\subsection{Organization of the paper} In the rest of paper, we introduce preliminaries in Section 2 including orbital stability due to \cite{Choi2020}. Then, in Section 3, we prove the main estimate \eqref{main_est}. As a consequence, Corollary \ref{cor:vor:grow} follows. In Section 4, we present filamentation results (Corollaries \ref{cor:peri_growth}, \ref{cor:hessian_growth}).  Unless otherwise specified, the $L^p$ norms are always taken with respect to the Lebesgue measure on $\bbR^3$.

\section{Preliminaries}

\subsection{Axi-symmetric Biot-Savart law} \label{subsec_axi-sym} 
A  vector field $ u
$ is  called \textit{axi-symmetric without swirl}
if 
it has the form of   $$ u(x)=u^r(r,z)e_r(\theta)  +u^z(r,z)e_z,
$$ for 
\begin{equation*}
e_r(\theta)=(\cos\theta,\sin\theta,0),
\quad e_\theta(\theta)=(-\sin \theta, \cos \theta,0),
\quad e_z=(0,0,1),
\end{equation*}
where $(r,\theta,z)$ is the cylindrical coordinate to the Cartesian coordinate $x=(x_1,x_2,x_3)$, i.e. $x_1=r\cos\theta,\, x_2=r\sin\theta,\,x_3=z.$\\

For given axi-symmetric \textit{nice} function $\xi=\xi(r,z)$, we set 
the vorticity vector field 
\begin{equation}\label{form_omega}
 {\omega} : =  \omega^\theta e_\theta(\theta),
\quad \omega^\theta:=r\xi,
\end{equation} and  set
\begin{equation}\label{form_psi}
 \phi:
=\frac{1}{4\pi|x|}*_{\mathbb{R}^3}\omega.
 \end{equation}
 Then $\phi$ has only its angular component $\phi^\theta e_\theta$,
 and the velocity 
\begin{equation}\label{form_u}
 u=\mathcal{K}[\xi] :=\nabla \times\phi
\end{equation} 
 is axi-symmetric without swirl and divergence-free. In particular, it satisfies  ${\omega} =\nabla\times  u$, i.e. $$
(\partial_z u^r-\partial_r u^z) =  \omega^\theta . 
$$ 
For an explicit representation of the axi-symmetric Biot-Savart law $u=\mathcal{K}[\xi]$, we refer to 
 \cite[Section 1]{FT81}, \cite{Sverak_lecture}, \cite{Do}.\\

We frequently use the following  estimate on velocity in terms of relative vorticity which was   proved very recently:
\begin{lemma}[{{\cite[Feng--Sverak (1.23)]{FengSverak}}}]  \label{lem:FS} For $\xi \in L^1 \cap L^\infty$ with $r^2\xi \in L^1$, $\calK[\xi]$ satisfies 
	\begin{equation}\label{est_vel}
		\|\mathcal{K}[\xi]\|_{L^\infty(\mathbb{R}^3)}\leq C_{0}\|r^2\xi\|_{L^1(\mathbb{R}^3)}^{1/4}
		\|\xi\|_{L^1(\mathbb{R}^3)}^{1/4}\|\xi\|_{L^\infty(\mathbb{R}^3)}^{1/2},
	\end{equation} where $C_0>0$ is an absolute constant.
\end{lemma}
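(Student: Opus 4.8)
\emph{Proof proposal.} The plan is to reduce the $L^\infty$ bound on $u=\calK[\xi]$ to pointwise estimates on the Stokes stream function and its gradient, using the ``fifth-dimension'' form of the axisymmetric stream-function equation (this essentially reconstructs the Feng--Sverak estimate). For an axisymmetric no-swirl field $u$ with vector potential $\phi=\phi^\theta e_\theta$ one has $u^r=-\partial_z\phi^\theta$, $u^z=\partial_r\phi^\theta+\phi^\theta/r$, and $\phi^\theta=\psi/r$, where the Stokes stream function $\psi=\psi(r,z)$ solves $\big(\partial_r^2-\tfrac1r\partial_r+\partial_z^2\big)\psi=-r^2\xi$. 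Substituting $\psi=r^2\Phi$ turns this into $\Delta_5\Phi=-\xi$, that is, $\Phi$ is the Newtonian potential in $\bbR^5$ of $\xi$, regarded as a function on $\bbR^5$ depending only on $(r,z)$ with $r=|x'|$, $x'\in\bbR^4$. Then $\phi^\theta=r\Phi$, so $u^r=-r\,\partial_z\Phi$ and $u^z=2\Phi+r\,\partial_r\Phi$, and therefore
\[
|u(x)|\ \lesssim\ |\Phi(X)|+r\,|\nabla_X\Phi(X)|,\qquad X=(x',z)\in\bbR^5,\quad |x'|=r .
\]
Since $\|\xi\|_{L^1(\bbR^5)}\simeq\|r^2\xi\|_{L^1(\bbR^3)}$ and $\|\xi\|_{L^\infty(\bbR^5)}=\|\xi\|_{L^\infty(\bbR^3)}$, the hypotheses make $\Phi$ and $\nabla_X\Phi$ absolutely convergent integrals with $|\Phi(X)|\lesssim\int_{\bbR^5}|X-Y|^{-3}|\xi(Y)|\,dY$ and $|\nabla_X\Phi(X)|\lesssim\int_{\bbR^5}|X-Y|^{-4}|\xi(Y)|\,dY$.

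The core of the argument is to integrate out the $S^3$ of directions in $\bbR^4$ that $\xi$ does not see. Writing $Y=(r_Y\omega,\zeta)$ with $\omega\in S^3$ and using the density of $\omega\cdot e_1$ (proportional to $(1-t^2)^{1/2}$ on $[-1,1]$), splitting the angular integral near and far from the diagonal $\{\omega\cdot e_1=1\}$ yields
\[
\int_{S^3}\frac{d\sigma(\omega)}{|X-Y|^{3}}\ \lesssim\ \frac{\log\!\big(2+r r_Y/d^{2}\big)}{(d+\sqrt{r r_Y}\,)^{3}},
\qquad
\int_{S^3}\frac{d\sigma(\omega)}{|X-Y|^{4}}\ \lesssim\ \frac{1}{d\,(d+\sqrt{r r_Y}\,)^{3}},
\]
where $d:=\big((r-r_Y)^2+(z-\zeta)^2\big)^{1/2}$ is the distance in the meridian half-plane $\{(r_Y,\zeta):r_Y>0\}$. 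The gain over the trivial bound from $|X-Y|\ge d$ is the weight $(1+\sqrt{r r_Y}/d)^{-3}$ (up to the logarithm), which is the quantitative form of the cancellation along a vortex ring. Inserting these and $dY=r_Y^{3}\,dr_Y\,d\sigma$, the velocity estimate becomes a weighted two-dimensional inequality,
\[
|u(x)|\ \lesssim\ \int_0^\infty\!\!\int_{\bbR}\Big(\frac{\log(2+r r_Y/d^{2})}{(d+\sqrt{r r_Y}\,)^{3}}+\frac{r}{d\,(d+\sqrt{r r_Y}\,)^{3}}\Big)\,|\xi(r_Y,\zeta)|\,r_Y^{3}\,d\zeta\,dr_Y .
\]

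It then remains to bound the right-hand side by $\|r^2\xi\|_{L^1}^{1/4}\|\xi\|_{L^1}^{1/4}\|\xi\|_{L^\infty}^{1/2}$. I would do this by a dyadic decomposition of the meridian half-plane according to the size of $d$ --- and separately according to whether $r_Y\lesssim r$ or $r_Y\gtrsim r$, which fixes $\sqrt{r r_Y}$ relative to $r_Y$: on the near-diagonal pieces use $|\xi|\le\|\xi\|_{L^\infty}$ times the area of the piece, on the far pieces use the two moment bounds $\int r_Y|\xi|\,dr_Y\,d\zeta\simeq\|\xi\|_{L^1(\bbR^3)}$ and $\int r_Y^{3}|\xi|\,dr_Y\,d\zeta\simeq\|r^2\xi\|_{L^1(\bbR^3)}$, interpolated by Cauchy--Schwarz, $\int r_Y^{2}|\xi|\le(\int r_Y|\xi|)^{1/2}(\int r_Y^{3}|\xi|)^{1/2}$ --- the same inequality that gives $\|\omega\|_{L^1(\bbR^3)}\le\|\xi\|_{L^1}^{1/2}\|r^2\xi\|_{L^1}^{1/2}$ --- and then optimize the cut-off scale. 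The exponents $(\tfrac14,\tfrac14,\tfrac12)$ are in any case forced by the scaling symmetry of the estimate, and the logarithm is harmless since it is beaten by any power.

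The point worth stressing is why some such argument is necessary. The elementary three-dimensional Biot--Savart bound $|u(x)|\lesssim\int_{\bbR^3}|x-y|^{-2}|\omega(y)|\,dy$ is useless here, since it would require control of $\|\omega\|_{L^\infty}=\|r\xi\|_{L^\infty}$, which is not among the hypotheses and is genuinely infinite for relative vorticities that do not decay in $r$. All the difficulty is therefore concentrated in the region where $x$ is far from the symmetry axis; there the estimate can only hold because of the $\sqrt{r r_Y}$ gain in the averaged kernels above --- heuristically, a thin vortex ring of radius $R$ with circulation $\Gamma\sim R$ and $O(1)$ core radius $a$ induces a velocity $\sim\Gamma/a$ of size $\|r^2\xi\|_{L^1}^{1/4}\|\xi\|_{L^1}^{1/4}\|\xi\|_{L^\infty}^{1/2}$, whereas the naive bound would give the strictly larger $\|\omega\|_{L^1}^{1/3}\|\omega\|_{L^\infty}^{2/3}$. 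Carrying the $r_Y$-dependent scale $\sqrt{r r_Y}$ through the near/far optimization cleanly is the one genuinely delicate step; the rest is bookkeeping. (One can instead run the argument directly with the classical axisymmetric Biot--Savart kernel expressed through complete elliptic integrals, whose near-diagonal logarithm and $\sqrt{r r_Y}$ decay are precisely the $S^3$-averages above; the fifth-dimension route only streamlines the accounting.)
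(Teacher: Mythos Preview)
The paper does not prove this lemma at all: it is simply quoted from Feng--\v{S}ver\'ak \cite{FengSverak} (their inequality~(1.23)), with no argument supplied. So there is no ``paper's own proof'' to compare against.

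Your proposal is a faithful reconstruction of the Feng--\v{S}ver\'ak argument itself: the fifth-dimension substitution $\psi=r^2\Phi$ turning the stream-function equation into $\Delta_5\Phi=-\xi$, the reduction of $|u|$ to $|\Phi|+r|\nabla_X\Phi|$, the $S^3$-averaging of the Newton kernels to extract the $\sqrt{rr_Y}$ gain, and the near/far splitting optimized against $\|\xi\|_{L^\infty}$, $\|\xi\|_{L^1}$, $\|r^2\xi\|_{L^1}$. The scaling check for the exponents $(\tfrac14,\tfrac14,\tfrac12)$ is correct, and your diagnosis of why the naive 3D Biot--Savart bound fails (it would need $\|r\xi\|_{L^\infty}$) is on point. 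The one place where your write-up is genuinely a sketch rather than a proof is the dyadic optimization in the meridian half-plane: you assert that splitting on the scale of $d$ and on $r_Y\lessgtr r$, together with the Cauchy--Schwarz interpolation $\int r_Y^2|\xi|\le(\int r_Y|\xi|)^{1/2}(\int r_Y^3|\xi|)^{1/2}$, yields the claimed product, but you do not actually carry it out, and the logarithmic factor in the $|X-Y|^{-3}$ average has to be absorbed somewhere concrete. This step is routine but not entirely trivial; if you want a self-contained proof you should write out at least one representative case of the dyadic sum. Otherwise the outline is sound and matches the source.
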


\subsection{Revisited Hill's spherical vortex}\label{subsec_def_hill}
The  axi-symmetric velocity $u=(u_H^re_r+  u_H^ze_z)$ of Hill's vortex $\xi_H=\mathbf{1}_{\{|x|\leq1\}}$ \eqref{defn_hill_intro} is explicitly written by 
 \begin{equation}\label{vel_explicit}\begin{split}
		&u_H^r=-\frac{\partial_z\psi_H}{r}=
		\begin{cases}
			&\frac{3}{2}Wrz,\quad |x|\leq 1,\\
			&\frac{3}{2}W\frac{rz}{|x|^5},\quad |x|>1, \end{cases}\\
		&u_H^z=\frac{\partial_r\psi_H}{r}=
		\begin{cases}
			&\frac{W}{2}\left(5-3|x|^2-3r^2\right),\quad |x|\leq 1,\\
			&\frac{W}{|x|^3}\left(1-\frac{3r^2}{2|x|^2}\right),\quad |x|>1, \end{cases}
	\end{split}
\end{equation}  where
the traveling speed $W=W_H$ is equal to ${2}/{15}$. 
\subsection{Lyapunov orbital stability }

We borrow the following orbital stability theorem from \cite{Choi2020}:
\begin{thm}[{{\cite[Theorem 1.2]{Choi2020}}}]\label{thm_hill_gen} 
	For  $\varepsilon>0$, 
	there exists $\delta=\delta(\varepsilon)>0$ such that 
	for 
	any non-negative axi-symmetric  function
	$\xi_0$ satisfying
	\begin{equation}\label{assump_uniq}
		\xi_0, \, r\xi_0\in L^\infty(\mathbb{R}^3)
	\end{equation} and
	\begin{align*}
		\|\xi_0-\xi_H\|_{L^1\cap L^2(\mathbb{R}^3)}
		+\|r^2(\xi_0-\xi_H)\|_{L^1(\mathbb{R}^3)} 
		\leq \delta,
	\end{align*} the corresponding
	solution $\xi(t)$ of \eqref{3d_Euler_eq_intro} 
	for the initial data $ \xi_0$ 
	satisfies
	\begin{align*}\label{conclu_orb_patch_gen}
		\inf_{\tau\in\mathbb{R}}\left\{ 
		\|\xi(\cdot+\tau e_{z},t)-\xi_H\|_{L^1\cap L^2(\mathbb{R}^3)}
		+\|r^2(\xi(\cdot+\tau e_{z},t)-\xi_H)\|_{L^1(\mathbb{R}^3)}  
		\right\}
		\leq \varepsilon \quad \mbox{for all}\quad t\geq0.
	\end{align*} 
\end{thm}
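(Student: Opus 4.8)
The plan is to establish this Lyapunov orbital stability by the classical variational method together with a concentration-compactness argument, following the strategy indicated in the introduction: combine the existence theorem of Friedman--Turkington \cite{FT81}, the uniqueness theorem of Amick--Fraenkel \cite{AF86}, and Lions' concentration-compactness principle \cite{Lions84a}, while invoking the renormalized weak-solution theory \cite{NoSe} to legitimize the conservation laws for the rough data under consideration.

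\emph{Conserved quantities.} First I would record the functionals preserved along the axisymmetric flow \eqref{3d_Euler_eq_intro}: the kinetic energy $E[\xi]=\tfrac12\int_{\bbR^3}|\calK[\xi]|^2\,dx$, which is a positive-definite quadratic form in $\xi$ with a positive interaction kernel; the impulse $\mathcal I[\xi]=\int_{\bbR^3} r^2\xi\,dx$; and the entire distribution function $\lmb\mapsto|\{\xi>\lmb\}|$, since $\xi$ is transported by the measure-preserving flow of a divergence-free field. In particular every $L^p$ norm of $\xi$ is constant in $t$. Because the data are only assumed to satisfy $\xi_0,r\xi_0\in L^\infty$ and $\xi_0\ge0$, I would appeal to the weak/renormalized solution theory of \cite{NoSe} to guarantee that these quantities are genuinely conserved; the bound \eqref{est_vel} of Lemma \ref{lem:FS} keeps $\calK[\xi(t)]$ uniformly bounded, which is convenient for the a priori estimates.

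\emph{Variational characterization and compactness.} Next I would recall the variational problem solved by Hill's vortex: maximize $E[\xi]$ over non-negative axisymmetric $\xi$ with the prescribed rearrangement class of $\xi_H$ and the prescribed impulse $\mathcal I[\xi]=\mathcal I[\xi_H]$, calling $I_\ast$ the maximal value. Existence of a maximizer, forced to be a vortex ring through its Euler--Lagrange relation, is \cite{FT81}, and its uniqueness---that it equals $\xi_H$ up to a $z$-translation---is \cite{AF86}. The analytic core is then to show that \emph{every} maximizing sequence is relatively compact modulo $z$-translations and converges to $\xi_H$ in the $L^1\cap L^2+\text{impulse}$ norm. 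For this I would run concentration-compactness on $E$: vanishing is excluded since it drives $E\to0<I_\ast$, and dichotomy is excluded by a strict super-additivity (binding-type) inequality for the constrained energy---driven by the positivity of the interaction kernel, two widely separated pieces forfeit their positive cross-energy and cannot be optimal. The surviving alternative, compactness after recentering in $z$, together with the uniqueness \cite{AF86}, identifies the limit as $\xi_H$; weak $L^2$ convergence is upgraded to strong convergence using conservation of $\|\xi\|_{L^2}$, and the impulse term is controlled by the impulse constraint.

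\emph{Lyapunov argument and main obstacle.} Finally I would argue by contradiction: if the claim failed, there would exist $\varep_0>0$, data $\xi_0^{(n)}\to\xi_H$ in the stated norm, and times $t_n$ with $\inf_{\tau}\{\cdots\}>\varep_0$ at $t=t_n$. Convergence of the data forces convergence of all conserved quantities to those of $\xi_H$, so by the conservation laws recorded above the family $\xi^{(n)}(t_n)$ is an almost-maximizing sequence for the variational problem whose constraint values converge to those of $\xi_H$. Applying the compactness established above, in the form stable under small perturbations of the constraint values, produces shifts $\tau_n$ with $\xi^{(n)}(\cdot+\tau_n e_z,t_n)\to\xi_H$, contradicting $\inf_{\tau}\{\cdots\}>\varep_0$. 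I expect the principal obstacle to be the concentration-compactness analysis: proving the quantitative binding inequality that rules out dichotomy, and handling the genuine $z$-translation non-compactness by recentering, while the identification of the limit rests essentially on the rigidity theorem \cite{AF86}. A secondary but real difficulty is justifying exact conservation of energy, impulse, and the distribution function for merely $L^\infty$ (non-smooth) data, which is precisely why the renormalized weak-solution framework \cite{NoSe} is needed.
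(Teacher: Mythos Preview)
The paper does not give its own proof of this statement: Theorem~\ref{thm_hill_gen} is simply quoted from \cite[Theorem 1.2]{Choi2020}, as the preceding sentence ``We borrow the following orbital stability theorem from \cite{Choi2020}'' makes explicit. Your outline accurately reconstructs the strategy of the cited proof as summarized in Subsection~\ref{subsec:related}: the variational characterization via \cite{FT81}, uniqueness via \cite{AF86}, Lions' concentration-compactness \cite{Lions84a}, and the renormalized weak-solution framework \cite{NoSe} for the conservation laws (cf.\ Remark~\ref{rem_uniq_sol}). So there is nothing to compare against here, and your sketch is consistent with what the paper says about the proof in \cite{Choi2020}.
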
 

\begin{remark} \label{rem_uniq_sol} 
	For any
	axi-symmetric initial data 
	\begin{equation*}
		0\leq \xi_0\in 
		(L^1\cap L^2)(\mathbb{R}^3)\quad\mbox{with}\quad
		r^2\xi_0\in   L^1(\mathbb{R}^3),
	\end{equation*}
	 the corresponding global-in-time weak solution exists uniquely if we assume    the  extra condition \eqref{assump_uniq} on the relative vorticity $\xi_0$ (by  \cite{UI}, or  see \cite{Raymond}, \cite{Danchin}).  
In particular, the   solution  preserves in time  the 
quantities
 below (\textit{e.g.} see \cite{NoSe} or  see \cite[Lemma 3.4]{Choi2020}):
\begin{enumerate}
	\item     Impulse
	$ \quad \frac{1}{2}\int_{\mathbb{R}^3}r^2\xi  dx.
	$
	\item Kinetic energy $\quad 		\frac{1}{2}\int_{\mathbb{R}^3} |u|^2 dx. $
	\item  	
	$L^p-$norm   $\quad \|\xi\|_{L^p(\mathbb{R}^3)},\quad p\in[1,\infty].$\end{enumerate}
\end{remark}

\subsection{Flow map}

\begin{proposition}
	For axi-symmetric initial data satisfying $\xi_0 \in L^1\cap L^\infty$ and $\omg^\tht_0 = r\xi_0 \in L^\infty$, there exists a unique flow $\phi(t,x): [0,\infty) \times \bbR^3\rightarrow\bbR^3 $ satisfying $$\frac{d}{dt}\phi(t,x)=u(t,\phi(t,x)),\quad \phi(0,x)=x $$  and hence 
	$$\xi(t,\phi(t,x))=\xi_0(x).$$ For each $t>0$, $\phi(t,\cdot)$ is a H\"older continuous homeomorphism of $\bbR^3$.  Moreover, the symmetry axis $\{r=0\}$ is invariant by the flow.
\end{proposition}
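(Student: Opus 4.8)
The plan is to establish the existence, uniqueness and regularity of the flow map in three stages: first obtaining enough regularity of the velocity field $u = \calK[\xi]$ to run an ODE argument, then constructing the flow and proving the transport identity, and finally addressing the symmetry properties.

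\textbf{Step 1: Regularity of the velocity field.} Under the hypotheses $\xi_0 \in L^1\cap L^\infty$ and $r\xi_0 \in L^\infty$, the solution $\xi(t)$ of \eqref{3d_Euler_eq_intro} exists and is unique by Remark \ref{rem_uniq_sol}, and all the conserved quantities listed there are preserved; in particular $\|\xi(t)\|_{L^1\cap L^\infty}$ and $\|r^2\xi(t)\|_{L^1}$ stay bounded uniformly in $t$, while $\|r\xi(t)\|_{L^\infty}$ is bounded because $\omega^\tht = r\xi$ is transported by a measure-preserving axisymmetric flow. By Lemma \ref{lem:FS} this gives $\|u(t,\cdot)\|_{L^\infty}$ bounded uniformly in time. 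The key additional input is log-Lipschitz regularity of $u$ in space: the axisymmetric Biot--Savart kernel is, up to the usual $2$D-type structure, a Calderón--Zygmund operator applied to a bounded compactly-supported vorticity, so one obtains the classical bound $|u(t,x)-u(t,y)| \le C\, \omega(|x-y|)$ with $\omega(s) = s(1+\log_+(1/s))$ and $C$ depending only on the conserved norms above (this is the standard Yudovich estimate; one may cite \cite{Raymond}, \cite{Danchin} or reprove it from the kernel representation in \cite{FT81}, \cite{FengSverak}). Continuity in time of $u$ follows from weak-$*$ continuity of $\xi(t)$ together with the compactness of the kernel.

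\textbf{Step 2: Construction of the flow and the transport identity.} With $u \in L^\infty_t(L^\infty_x)$ and log-Lipschitz in $x$, the Osgood uniqueness criterion applies: the ODE $\dot\phi = u(t,\phi)$, $\phi(0,x)=x$ has a unique solution for every $x$, defined for all $t \ge 0$ since $|u|$ is bounded. Uniqueness and the group property make $\phi(t,\cdot)$ a bijection of $\bbR^3$ with inverse $\phi(t,\cdot)^{-1}$ solving the backward ODE, and the log-Lipschitz bound upgrades this to: $\phi(t,\cdot)$ and its inverse are Hölder continuous with exponent $e^{-Ct}$, hence $\phi(t,\cdot)$ is a Hölder homeomorphism. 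The transport identity $\xi(t,\phi(t,x)) = \xi_0(x)$ is then obtained by the usual argument: it holds for smooth approximations (mollify $\xi_0$, run the regularized problem, pass to the limit using stability of both the PDE solution and the flow under the approximation), or equivalently one checks that $t \mapsto \xi(t,\phi(t,x))$ is constant in the sense of distributions using that $\xi$ is a renormalized/weak solution of the transport equation with log-Lipschitz (in particular $W^{1,1}_{loc}$, divergence-free, bounded) velocity, invoking DiPerna--Lions/Ambrosio theory.

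\textbf{Step 3: Symmetry of the axis.} Since $u$ is axisymmetric without swirl, $u^r(0,z) = 0$ for all $z$ (the radial component vanishes on the axis because $u$ is a continuous vector field invariant under rotation about the $z$-axis). Hence for $x$ on the axis $\{r=0\}$ the curve $t\mapsto (0, z(t))$ with $\dot z = u^z(t,0,z)$ solves the flow ODE, and by uniqueness it is \emph{the} trajectory; thus $\phi(t,\cdot)$ maps $\{r=0\}$ into itself, and being a homeomorphism, onto itself.

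\textbf{Main obstacle.} The only genuinely delicate point is the log-Lipschitz estimate for the axisymmetric Biot--Savart law with merely $L^1\cap L^\infty$ relative vorticity (note the weight $r$ makes $\omega^\tht = r\xi$ only locally bounded, so one must be a little careful near the axis and at spatial infinity); everything downstream — Osgood uniqueness, the Hölder modulus $e^{-Ct}$, the transport identity, and the axis invariance — is then routine and can largely be cited from \cite{Raymond}, \cite{Danchin}, \cite{UI}. I would therefore spend the bulk of the proof either carefully deriving, or precisely citing, this velocity regularity, and treat the remaining assertions briefly.
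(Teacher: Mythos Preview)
Your overall architecture is the same as the paper's: obtain log-Lipschitz regularity of $u$, invoke Osgood to construct the unique H\"older flow, and read off axis invariance from $u^r|_{r=0}=0$. However, Step~1 contains a genuine error that undermines the rest.

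You write that ``$\|r\xi(t)\|_{L^\infty}$ is bounded because $\omega^\theta = r\xi$ is transported by a measure-preserving axisymmetric flow.'' This is backwards: it is $\xi$, not $\omega^\theta$, that is transported by \eqref{3d_Euler_eq_intro}. The equation for $\omega^\theta$ carries a stretching term,
\[
\partial_t \omega^\theta + u\cdot\nabla \omega^\theta = \frac{u^r}{r}\,\omega^\theta,
\]
so $\|\omega^\theta(t)\|_{L^\infty}$ is \emph{not} conserved. Since your log-Lipschitz bound (and the standard Yudovich/BMO argument you cite) requires exactly $\omega^\theta(t)\in L^\infty\cap L^1$, this gap must be closed before anything downstream works. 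The paper handles this by first invoking Danchin's estimate
\[
\frac{|u^r(t)|}{r}\le C\|\xi(t)\|_{L^{3,1}}\le C\|\xi_0\|_{L^1\cap L^\infty},
\]
which controls the stretching coefficient uniformly in time, and then Gronwall to obtain
\[
\|\omega^\theta(t)\|_{L^\infty\cap L^1}\le \|\omega^\theta_0\|_{L^\infty\cap L^1}\exp\!\bigl(Ct\,\|\xi_0\|_{L^1\cap L^\infty}\bigr).
\]
Only then does one get $\nabla u(t)\in \mathrm{BMO}$ and hence the log-Lipschitz bound. Two smaller points: the hypotheses of the proposition do not include $r^2\xi_0\in L^1$, so your appeal to conservation of impulse to bound $\|u\|_{L^\infty}$ via Lemma~\ref{lem:FS} is unjustified here; and your ``main obstacle'' remark that $\omega^\theta$ is ``only locally bounded'' is off---by assumption $\omega^\theta_0\in L^\infty$ globally, the real issue is propagation in time, which is precisely the missing Danchin--Gronwall step.
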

\begin{proof}
 We recall the following estimate from \cite[Lemma 2]{Danchin} \begin{equation*}
		\begin{split}
			\frac{|u^r(t)|}{r} \le C\nrm{\xi(t)}_{L^{3,1}} \le C\nrm{\xi(t)}_{L^1\cap L^\infty} \le C\nrm{\xi_0}_{L^1\cap L^\infty} .
		\end{split}
	\end{equation*}  Then, from the vorticity equation, we have  \begin{equation*}
	\begin{split}
		\frac{d}{dt} \nrm{\omg^\tht}_{L^\infty \cap L^1} \le C\nrm{\xi_0}_{L^1\cap L^\infty}\nrm{\omg^\tht}_{L^\infty  \cap L^1}, 
	\end{split}
\end{equation*} which gives \begin{equation*}
\begin{split}
	\nrm{\omg^\tht(t)}_{L^\infty \cap L^1} \le \nrm{\omg^\tht_0}_{L^\infty \cap L^1} \exp( Ct \nrm{\xi_0}_{L^1\cap L^\infty} ).
\end{split}
\end{equation*} We have used that the velocity is divergence free. Using the above, we obtain  \begin{equation*}
\begin{split}
	\nrm{u(t)}_{logLip} := \sup_{x \ne x'} \frac{|u(t,x)-u(t,x')|}{|x-x'| \ln(10 + |x-x'|^{-1})}  \le C\nrm{\omg^\tht(t)}_{L^\infty \cap L^1} \le \nrm{\omg^\tht_0}_{L^\infty \cap L^1} \exp( Ct \nrm{\xi_0}_{L^1\cap L^\infty} ). 
\end{split}
\end{equation*} This follows from combining the standard estimate for singular integral operators (\cite{Stein70,MaB}) which gives $\nabla u (t) \in BMO(\bbR^3)$ with the observation that an anti-derivative of a $BMO$ function is log-Lipschitz (see for instance \cite{Bed}). Thanks to the log-Lipschitz estimate, there exists a unique and global-in-time solution to the ODE $$\frac{d}{dt}\phi(t,x)=u(t,\phi(t,x)),\quad \phi(0,x)=x $$ for each $x\in \bbR^3$ (\textit{e.g.} see \cite[Lemma 3.2]{MaPu}). Then one can proceed to show that $\phi(t,\cdot)$ for each $t\ge 0$ is a bijection which is H\"older continuous with H\"older continuous inverse. See for instance \cite[Section 8.2]{MaB}. Lastly, thanks to the axis-symmetry with divergence-free condition, it is easy to see 
$$
u^r|_{r=0}\equiv 0,
$$  which gives the last assertion.
\end{proof} 

From now on, for given time-dependent velocity field $u(\cdot_t,\cdot_x)$, we denote $\phi$ by  the particle trajectory map $\phi(t,(t_0,x))$  obtained from solving the following ODE system with any initial time $t_0\geq 0$: 
	\begin{equation}\label{traj}
		\frac{d}{dt}\phi(t,(t_0,x))=u(t,\phi(t,(t_0,x)))\quad\mbox{for }t>0\quad\mbox{and } \quad \phi(t_0,(t_0,x))=x\in\mathbb{R}^3.
	\end{equation}
When $t_0=0$ is considered, we simply call $\phi(t,x)=\phi(t,(0,x))$.

\section{Traveling speed of the perturbation}

The primary goal of this section is to obtain estimates on the shift function $\tau(t)$, see \eqref{eq:no_high_jump} and \eqref{eq:travel-est_hill}. With these key estimates proved in Subsection 3.1, we conclude Theorem \ref{thm:B} and Corollary \ref{cor:vor:grow} in Subsections 3.2 and 3.3, respectively. 

 \subsection{Estimates on shift $\tau(t)$}
For any $\tau\in\mathbb{R}$, we define
\begin{equation}\label{def_f}
f(\tau):=
|B\triangle B^\tau|,
\end{equation}
where we 
denote
 $$ {{B}^{\tau}}:=\{x\in\mathbb{R}^3\,:\, |x-\tau e_{x_3}|<1\}, \quad B^{0}=B.$$
We can explicitly compute $f$:
$$f(\tau)=
4\pi \int_0^{|\tau|/2}(1-s^2)  ds
=\frac{\pi}{6}|\tau|(12-\tau^2),\quad |\tau|\leq 2,$$
and $f(\tau)=2\cdot \frac 4 3 \pi$ for any $|\tau|\geq 2$.
We note that $f$ is strictly increasing on $[0,2]$ and
\begin{equation}\label{claim_hill_1}
  \frac 4 3 \pi |\tau|\leq f(\tau),
 \quad\mbox{for any}\quad  |\tau|\leq 2.
\end{equation}

\begin{lemma}\label{lem:no_high_jump} 
There exist absolute constants $\varepsilon_1>0$, $ \tilde{K}>0$ such that 
for each $M>0$, there is a constant  $ \tilde{C}=\tilde{C}(M)>0$ satisfying the following statement: \\

For any non-negative axi-symmetric $\xi_{0}$ satisfying $\xi_{0}, r\xi_{0} \in L^{\infty}(\bbR^3)$, and
$\|\xi_0\|_{L^\infty}\leq M$,
if  there is a shift function $\tau(\cdot):[0,\infty)\to\mathbb{R}$ satisfying
\begin{align*}
		\sup_{t\geq 0}\left\{\|\xi(\cdot+\tau(t) e_{z},t)-\xi_H\|_{L^1\cap L^2(\mathbb{R}^3)}
		+\|r^2(\xi(\cdot+\tau(t) e_{z},t)-\xi_H)\|_{L^1(\mathbb{R}^3)}\right\} \leq \varepsilon 
	\end{align*}  for some $\varepsilon\in(0,\varepsilon_1)$,
	then 
the function $\tau$  satisfies
 \begin{equation}\label{eq:no_high_jump}
	\begin{split}
		|\tau(t) - \tau(t')  | \leq \tilde{K}\cdot \varepsilon^{1/2},\quad t,t'\geq 0
	\end{split}
\end{equation} whenever $|t-t'|\leq \tilde{C}\cdot \varepsilon^{1/2}$.
\end{lemma}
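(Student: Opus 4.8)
The plan is to reduce the statement to a short-time $L^{1}$-stability estimate for the solution itself, and then invert the elementary function $f$. Observe that the shift $\tau$ enters the hypothesis only through the translated ball $\xi_H^{\tau}(x):=\xi_H(x-\tau e_{z})=\mathbf 1_{\{|x-\tau e_z|<1\}}$, and by translation invariance of $\|\cdot\|_{L^{1}}$ the hypothesis reads $\|\xi(t)-\xi_H^{\tau(t)}\|_{L^{1}}\le\varepsilon$ for all $t\ge0$. Since $\|\xi_H^{\sigma_1}-\xi_H^{\sigma_2}\|_{L^1}$ is the volume of the symmetric difference of two translated unit balls, it equals $f(\sigma_1-\sigma_2)$ with $f$ as in \eqref{def_f} (the direction of translation is immaterial since $f$ depends only on $|\sigma|$); recall $f\le\frac83\pi$ everywhere, $f$ is strictly increasing on $[0,2]$ with $f(2)=\frac83\pi$, and $f$ obeys the linear lower bound \eqref{claim_hill_1}. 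Hence, by the triangle inequality, $f(\tau(t)-\tau(t'))\le 2\varepsilon+\|\xi(t)-\xi(t')\|_{L^{1}}$, so everything comes down to controlling $\|\xi(t)-\xi(t')\|_{L^{1}}$ when $|t-t'|=O(\varepsilon^{1/2})$.

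First I would record a uniform-in-time velocity bound. Since $\xi$ is transported by the measure-preserving, divergence-free flow of Section~2.4, we have $\|\xi(t)\|_{L^\infty}=\|\xi_0\|_{L^\infty}\le M$, while the hypothesis at time $t$ gives $\|\xi(t)\|_{L^1}\le\|\xi_H\|_{L^1}+\varepsilon$ and $\|r^2\xi(t)\|_{L^1}\le\|r^2\xi_H\|_{L^1}+\varepsilon$ (a $z$-translation changes neither the $L^1$ nor the $r^2$-weighted mass of $\xi_H$). For $\varepsilon\le 1$, Lemma~\ref{lem:FS} then yields $\|u(t)\|_{L^\infty}\le C_{M}$ for all $t\ge0$, with $C_M$ depending only on $M$, and consequently every trajectory obeys $|\phi(t,(t',x))-x|\le C_{M}|t-t'|$.

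Next comes the short-time $L^{1}$ bound, which I expect to be the only real obstacle: $\xi(t)$ is merely $L^{1}\cap L^{\infty}$ and so has no pointwise modulus of continuity one can invoke directly. The device is to transfer the $t$-to-$t'$ comparison onto the characteristic function of a ball. Fix $t'<t$, let $\Psi:=\phi(t',(t,\cdot))$ be the backward flow (measure preserving, $|\Psi(y)-y|\le C_M|t-t'|=:h$), so that $\xi(t,y)=\xi(t',\Psi(y))$. With $B':=\{|x-\tau(t')e_z|<1\}$ and inserting $\mathbf 1_{B'}$ twice,
\[
 \|\xi(t)-\xi(t')\|_{L^1}\le \int|\xi(t',\Psi(y))-\mathbf 1_{B'}(\Psi(y))|\,dy + \int|\mathbf 1_{B'}(\Psi(y))-\mathbf 1_{B'}(y)|\,dy + \|\mathbf 1_{B'}-\xi(t')\|_{L^1}.
\]
The third term is $\le\varepsilon$ by hypothesis; the first equals $\|\xi(t')-\xi_H^{\tau(t')}\|_{L^1}\le\varepsilon$ after the measure-preserving change of variables $z=\Psi(y)$; and the middle term is bounded by the Lebesgue measure of $\{y:\mathrm{dist}(y,\partial B')\le h\}$, a spherical shell of volume $\le C h$ once $h\le1$. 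Thus $\|\xi(t)-\xi(t')\|_{L^1}\le 2\varepsilon+C\,C_M|t-t'|$, whence $f(\tau(t)-\tau(t'))\le 4\varepsilon+C\,C_M|t-t'|$.

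Finally I would fix the constants. Take $\varepsilon_1=1$ and choose $\tilde C=\tilde C(M)>0$ so small that $C\,C_M\tilde C\le1$ and $h=C_M\tilde C\,\varepsilon_1^{1/2}\le1$. Then for $|t-t'|\le\tilde C\,\varepsilon^{1/2}$ we get $f(\tau(t)-\tau(t'))\le 4\varepsilon+\varepsilon^{1/2}\le 5\varepsilon^{1/2}<\tfrac83\pi$; since $f$ is strictly increasing on $[0,2]$ and constant $=\tfrac83\pi$ beyond, this forces $|\tau(t)-\tau(t')|<2$, and \eqref{claim_hill_1} then gives $\tfrac43\pi|\tau(t)-\tau(t')|\le 5\varepsilon^{1/2}$, i.e. \eqref{eq:no_high_jump} with the \emph{absolute} constant $\tilde K=\tfrac{15}{4\pi}$. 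The point to double-check here is that $\tilde K$ really comes out absolute: this works precisely because the $|t-t'|$-contribution is pushed below $\varepsilon^{1/2}$ by the $M$-dependent choice of $\tilde C$, so only the $4\varepsilon\le 4\varepsilon^{1/2}$ term is left to bound $|\tau(t)-\tau(t')|$.
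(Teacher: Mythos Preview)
Your proof is correct and in fact more streamlined than the paper's. Both arguments share the same skeleton---the Feng--\v{S}ver\'{a}k velocity bound \eqref{est_vel}, the resulting flow-displacement estimate $|\phi(t,(t',x))-x|\le C_M|t-t'|$, and the inversion of $f$ via \eqref{claim_hill_1}---but they differ in how they bridge the two times. The paper introduces auxiliary sets $E_\varepsilon(t), F_\varepsilon(t), G_\varepsilon(t)$ via a Chebyshev argument (this is where the $L^2$ part of the hypothesis enters), and then controls $|B^{\tau(t)}\triangle B^{\tau(t')}|$ through the intermediate quantity $|E_\varepsilon(t)\triangle E_\varepsilon(t')|$, tracking how the flow transports $E_\varepsilon$. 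You instead bound $\|\xi(t)-\xi(t')\|_{L^1}$ directly by writing $\xi(t)=\xi(t')\circ\Psi$ and inserting $\mathbf 1_{B'}$ twice; the only non-trivial term is the shell volume $|\{y:\mathrm{dist}(y,\partial B')\le h\}|\lesssim h$. This avoids the $L^2$ hypothesis altogether and makes the dependence of constants transparent (in particular that $\tilde K$ is absolute while only $\tilde C$ absorbs the $M$-dependence). The paper's set-level decomposition is perhaps more robust for variants of the lemma, but for the statement at hand your argument is the cleaner one.
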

\begin{proof}
\begin{enumerate}

\item We denote
$$\xi_H^{\tau}:=\xi_H(\cdot-\tau e_{{z}})={\mathbf{1}}_{B^{\tau}}(\cdot),\quad \tau\in\mathbb{R}.$$
Let $\varepsilon_1\in(0,1/4)$. We will take $\varepsilon_1$ (again) small enough in the proof.
We take $\xi$ to be a solution satisfying all the assumptions of this lemma  with some $\varepsilon\in(0,\varepsilon_1)$ and $M>0$,
and define sets
$$G_\varepsilon(t):=\{
x\in \mathbb{R}^3\setminus B^{\tau(t)}\,:\,|\xi(t,x) -1|\leq \varepsilon^{1/2}
\},$$
$$E_\varepsilon(t):=\{
x\in B^{\tau(t)}\,:\,|\xi(t,x) -1|\leq \varepsilon^{1/2}
\}\subset B^{\tau(t)}\quad\mbox{and}\quad F_\varepsilon(t):= B^{\tau(t)}\setminus E_\varepsilon(t),\quad t\geq 0.$$
Then, by Chebyshev's inequality,  we have
$$
|G_\varepsilon(t)|\leq|\{
x\in \mathbb{R}^3\setminus B^{\tau(t)}\,|\,\xi(t,x)\geq 1/2
\}|\leq 2\|\xi(t)\|_{L^1(\mathbb{R}^3\setminus B^{\tau(t)})}
\leq 2\|\xi(t)-\xi_H^{\tau(t)}\|_{L^1 }\leq 2\varepsilon
$$ and
$$|F_\varepsilon(t)|\leq \|\xi(t)-\xi_H^{\tau(t)}\|^2_{L^2}\cdot\varepsilon^{-1}\leq \varepsilon,\quad t\geq 0.$$
Then  we can compute for any $t,t'\geq 0$,
  \begin{equation}
\begin{split}\label{est_b_tau}
  &|B^{\tau(t')}\triangle B^{\tau(t)}|= \|\xi_H^{\tau(t')}-\xi_H^{\tau(t)}\|_1
 =\|\xi_H^{\tau(t')}{\mathbf{1}}_{{B}^{\tau(t')}}-\xi_H^{\tau(t)}{\mathbf{1}}_{{B}^{\tau(t)}}\|_{L^1}\\
 &\quad  \leq \|\xi_H^{\tau(t')}{\mathbf{1}}_{{B}^{\tau(t')}}-\xi(t'){\mathbf{1}}_{{B}^{\tau(t')}}\|_{L^1}+\|\xi(t'){\mathbf{1}}_{{B}^{\tau(t')}}-\xi(t){\mathbf{1}}_{{B}^{\tau(t)}}\|_{L^1}+\|\xi(t){\mathbf{1}}_{{B}^{\tau(t)}}-\xi_H^{\tau(t)}{\mathbf{1}}_{{B}^{\tau(t)}}\|_{L^1}\\
&\quad = \|\xi_H^{\tau(t')}-\xi(t')\|_{L^1({B}^{\tau(t')})}+ \|\xi(t'){\mathbf{1}}_{{B}^{\tau(t')}}-\xi(t){\mathbf{1}}_{{B}^{\tau(t)}}\|_{L^1}+  \|\xi(t)-\xi_H^{\tau(t)}\|_{L^1({B}^{\tau(t)})}\\
&\quad\leq  \varepsilon + \|\xi(t'){\mathbf{1}}_{{B}^{\tau(t')}}-\xi(t){\mathbf{1}}_{{B}^{\tau(t)}}\|_{L^1}+\varepsilon\\&\quad\leq 2\varepsilon + \|\xi(t'){\mathbf{1}}_{E_\varepsilon(t')}-\xi(t){\mathbf{1}}_{E_\varepsilon(t)}\|_{L^1}+\|\xi(t'){\mathbf{1}}_{F_\varepsilon(t')}\|_{L^1}+\|\xi(t){\mathbf{1}}_{F_\varepsilon(t)}\|_{L^1}\\
&\quad\leq 2\varepsilon   +\|\xi(t'){\mathbf{1}}_{E_\varepsilon(t')}- {\mathbf{1}}_{E_\varepsilon(t')}\|_{L^1} +\underbrace{|
 {E_\varepsilon(t')}\triangle {E_\varepsilon(t)}|}_{=:I(t',t)}+\| {\mathbf{1}}_{E_\varepsilon(t)}-\xi(t){\mathbf{1}}_{E_\varepsilon(t)}\|_{L^1}\\ &\quad\quad+ 
\|\xi(t')\|_{L^2}|{F_\varepsilon(t')}|^{1/2}+
\|\xi(t)\|_{L^2}|{F_\varepsilon(t)}|^{1/2}\\
&\quad\leq 2\varepsilon +\|\xi(t') - \xi_H^{\tau(t')} \|_{L^1({E_\varepsilon(t')})}  + I(t',t)+\|\xi(t) - \xi_H^{\tau(t)} \|_{L^1({E_\varepsilon(t)})}  + 
2(\|\xi_H\|_{L^2}+\varepsilon)\varepsilon^{1/2}\\
&\quad\leq C_0\varepsilon^{1/2}   + I(t',t),   
\end{split}\end{equation} where $C_0>0$ is an absolute constant. Let us now proceed to prove $I(t',t) \lesssim \varep^{\frac12}$. To this end, we begin with 
\begin{align*}   {E_\varepsilon(t')}&=\left( \phi(t',(t, {E_\varepsilon(t)})\cup  \phi(t',(t, {G_\varepsilon(t)})\right)\cap B^{\tau(t')}\\   &
\subset \phi(t',(t, {E_\varepsilon(t)})\cup  \phi(t',(t, {G_\varepsilon(t)}),
\end{align*} 
 where 
	$\phi$ is the particle trajectory map \eqref{traj}.
Thus we compute
  \begin{align*}
 | {E_\varepsilon(t')}\setminus {E_\varepsilon(t)}|& \leq 
  |\phi(t',(t, {E_\varepsilon(t)})\setminus {E_\varepsilon(t)}|
  +  |\phi(t',(t, {G_\varepsilon(t)})|\\
  &\leq 
  |\phi(t',(t, B^{\tau(t)}))\setminus {E_\varepsilon(t)}|
  +  |{G_\varepsilon(t)}|\\
   &\leq 
  |\phi(t',(t, B^{\tau(t)}))\setminus {E_\varepsilon(t)}|
  +2\varepsilon.
\end{align*} 
Due to the estimate \eqref{est_vel}, 
the flow speed is uniformly bounded
\begin{equation}\begin{split}\label{unif_speed_h}
		\|u(t)\|_{L^\infty(\mathbb{R}^3)}&\leq C_{0}\|r^2\xi(t)\|_{L^1(\mathbb{R}^3)}^{1/4}
		\|\xi(t)\|_{L^1(\mathbb{R}^3)}^{1/4}\|\xi(t)\|_{L^\infty(\mathbb{R}^3)}^{1/2} \\
&=  C_{0}\|r^2\xi_0\|_{L^1(\mathbb{R}^3)}^{1/4}
		\|\xi_0\|_{L^1(\mathbb{R}^3)}^{1/4}\|\xi_0\|_{L^\infty(\mathbb{R}^3)}^{1/2} \\
		&\leq C_{0}(\varepsilon+\|r^2\xi_H\|_{L^1(\mathbb{R}^3)})^{1/4}
		(\varepsilon+\|\xi_H\|_{L^1(\mathbb{R}^3)})^{1/4}M^{1/2}\\
		&\leq C(\varepsilon_1+1)^{1/4}
		(\varepsilon_1+1)^{1/4}M^{1/2}\leq C_1M^{1/2},
\end{split}  \end{equation} where $C_1>0$ is an absolute constant. By denoting
$$
 {{B}^{\tau,a}}:=\{x\in\mathbb{R}^3\,|\, |x-\tau e_{x_3}|<a\},\quad a>0,$$
 we get
\begin{align*}
    \phi(t',(t, B^{\tau(t)}))\subset B^{\tau(t),\,{1+C_1M^{1/2}|t-t'|}},
\end{align*} which gives
  \begin{align*}
 | {E_\varepsilon(t')}\setminus {E_\varepsilon(t)}|&  \leq 
  |B^{\tau(t),\,{1+C_1M^{1/2}|t-t'|}}\setminus {E_\varepsilon(t)}|
  +  2\varepsilon\\
  & \leq 
  |B^{\tau(t),\,{1+C_1M^{1/2}|t-t'|}}\setminus B^{\tau(t)}|+|B^{\tau(t)}\setminus {E_\varepsilon(t)}|
  +  2\varepsilon\\
  & \leq  \frac 4 3 \pi \left( \left(1+C_1M^{1/2}|t-t'|\right)^3-1^3\right)+\underbrace{| {F_\varepsilon(t)}|}_{\leq \varepsilon}
  +  2\varepsilon\\
   & \leq  C_2  \varepsilon^{1/2} 
\end{align*} whenever $t,t'\geq0$ satisfy
\begin{equation}\label{cond_ep1}
 \left( C_1M^{1/2}|t-t'|\right) \leq \varepsilon^{1/2},
\end{equation}
where $C_2>0$ is an absolute constant. By exchanging the role of $t$ with that of $t'$, we get, for any $t,t'\geq 0$ assuming \eqref{cond_ep1},
  \begin{align*}
 I(t',t)=|
 {E_\varepsilon(t')}\setminus {E_\varepsilon(t)}|+|
 {E_\varepsilon(t)}\setminus {E_\varepsilon(t')}|\leq  2  C_2  \varepsilon^{1/2}.
\end{align*} 
By recalling the definition of $f$ in \eqref{def_f} and 
the estimate \eqref{est_b_tau}, we have shown, for any $t,t'\geq 0$ with \eqref{cond_ep1},
  \begin{align*}
 f(\tau(t')-\tau(t))=|B\triangle B^{\tau(t')-\tau(t)}| =|B^{\tau(t')}\triangle B^{\tau(t)}|& \leq (C_0+2C_2)  \varepsilon^{1/2}.
\end{align*} 
Lastly, we assume $\varepsilon_1>0$ smaller than before in order to have
  \begin{align*}
   (C_0+2C_2)\varepsilon_1^{1/2}<2\cdot\frac 4 3 \pi=f(2).
\end{align*} 
This assumption guarantees 
  \begin{align*}
 |\tau(t')-\tau(t)|\leq 2,
\end{align*}  which, in turn (due to \eqref{claim_hill_1}), gives
  \begin{align*}
\frac 4 3 \pi|\tau(t')-\tau(t)| \leq f(\tau(t')-\tau(t))\leq (C_0+2C_2) \varepsilon^{1/2}.
\end{align*}  \end{enumerate} This gives the statement. 
 \end{proof}

 \begin{proposition}[Traveling speed]\label{prop:travel-speed_hill} 
There exist absolute  constants $\varepsilon_0>0,\, K>0$ such that
for each $M>0$, there is a   constant $\alpha_0=\alpha_0(M)>0$  satisfying the following statement:\\ 

For any non-negative axi-symmetric $\xi_{0}$ satisfying $\xi_{0}, r\xi_{0} \in L^{\infty}(\bbR^3)$, and
$\|\xi_0\|_{L^\infty}\leq M$,
if  there is a shift function $\tau(\cdot):[0,\infty)\to\mathbb{R}$ satisfying
\begin{align}\label{assum_lem_stab} 
	\sup_{t\geq 0}\left\{	\|\xi(\cdot+\tau(t) e_{z},t)-\xi_H\|_{L^1\cap L^2(\mathbb{R}^3)}
		+\|r^2(\xi(\cdot+\tau(t) e_{z},t)-\xi_H)\|_{L^1(\mathbb{R}^3)} \right\}\leq \varepsilon
	\end{align}  for some $\varepsilon\in(0,\varepsilon_0)$,
	then 
the function $\tau$  satisfies
   \begin{equation}\label{eq:travel-est_hill} 
		\begin{split}
			|\tau(t + \alp  ) - \tau(t) - {W} \alp   | \le {K}\cdot \varep^{1/2}
		\end{split}
	\end{equation}   for any $t\geq0$ and for any $\alp \in [0,\alp_0]$.
\end{proposition}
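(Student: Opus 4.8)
\textbf{Proof proposal for Proposition \ref{prop:travel-speed_hill}.}

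The plan is to estimate the displacement of the shifted ball $B^{\tau(t)}$ over a short time interval by tracking, on the one hand, the motion of the fluid particles that fill (most of) it, and on the other hand, the fact that the vorticity stays $\varep$-close to the translated Hill's vortex at every time. First I would fix $t\geq 0$ and, for $\alp\in[0,\alp_0]$ to be chosen, consider the particle trajectory map $\phi(t+\alp,(t,\cdot))$ from \eqref{traj}. Applied to the ``good core'' set $E_\varep(t)$ (in the notation of Lemma \ref{lem:no_high_jump}, the part of $B^{\tau(t)}$ where $\xi$ is within $\varep^{1/2}$ of $1$), this set is transported to a set that is $O(\varep^{1/2})$-close in measure to $E_\varep(t+\alp)$, by exactly the argument already carried out in the proof of Lemma \ref{lem:no_high_jump} (using that $|G_\varep|,|F_\varep|\lesssim \varep$ and that $\phi$ is measure-preserving). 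So it suffices to show that the particles of $E_\varep(t)$ move, over time $\alp$, by approximately $W\alp$ in the $z$-direction, up to error $O(\varep^{1/2})$ uniformly; then $B^{\tau(t+\alp)}$ and $B^{\tau(t)+W\alp}$ are $O(\varep^{1/2})$-close in symmetric difference, and \eqref{claim_hill_1} together with the formula for $f$ converts this into \eqref{eq:travel-est_hill}, provided $\varep_0$ is small enough that $|\tau(t+\alp)-\tau(t)-W\alp|\leq 2$.

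The heart of the matter is therefore the velocity estimate: for $x$ in (most of) $B^{\tau(t)}$ and $s\in[t,t+\alp]$, I want $u^z(s,\phi(s,(t,x))) = W + O(\varep^{1/2})$ and $|u^r(s,\phi(s,(t,x)))| \lesssim \varep^{1/2}$-ish, or at least something integrable-in-$\alp$ giving the right net displacement. The strategy is to compare $u(s) = \calK[\xi(s)]$ with the explicit Hill velocity $u_H$ translated by $\tau(s)$, i.e. $\calK[\xi_H^{\tau(s)}]$, whose formula is \eqref{vel_explicit}. By linearity, $u(s) - \calK[\xi_H^{\tau(s)}] = \calK[\xi(s) - \xi_H^{\tau(s)}]$, and the hypothesis \eqref{assum_lem_stab} says the argument has $L^1\cap L^2$-norm (and weighted $L^1$-norm) at most $\varep$. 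One must then invoke an appropriate bound for $\calK$ on the perturbation — here the catch is that Lemma \ref{lem:FS} needs an $L^\infty$ bound on the argument, and $\xi(s)-\xi_H^{\tau(s)}$ is only controlled in $L^1\cap L^2$, so its $L^\infty$ norm is merely $\leq M+1$; thus \eqref{est_vel} gives $\|\calK[\xi(s)-\xi_H^{\tau(s)}]\|_{L^\infty}\lesssim_M \varep^{1/4}\cdot\varep^{1/4}\cdot 1 = \varep^{1/2}$. This is exactly the $\varep^{1/2}$ in \eqref{eq:travel-est_hill}. (If a cleaner $L^\infty$-in-velocity-from-$L^1\cap L^2$-in-vorticity bound is available one could get the same conclusion; the point is the quantitative $\varep^{1/2}$.) Now along the trajectory, as long as $\alp_0$ is small depending on $M$, the particle starting in $B^{\tau(t)}$ stays inside $B^{\tau(s),\,1+C_1M^{1/2}\alp_0}$ by the uniform speed bound \eqref{unif_speed_h}; choosing $\alp_0(M)$ small makes this a slightly enlarged ball, on which $u_H^z \approx W$ up to $O(\alp_0)$ error from the explicit formula \eqref{vel_explicit} (recall $u_H^z = \frac{W}{2}(5-3|x|^2-3r^2)$ near the center, which is not uniformly $W$, so one actually needs to track that the \emph{bulk} of the mass moves at average speed $\approx W$ — see the next paragraph).

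The one genuinely delicate point is that $u_H^z$ is \emph{not} constant on $B$, so individual particles do not all move at speed $W$; what is true is that the \emph{set} $B^{\tau(t)}$, viewed as being (almost) materially transported, has its ``shape'' preserved because $\xi_H$ is a traveling wave. The clean way to exploit this: instead of pointwise velocity bounds, compare the flow of $u(s)$ with the flow of the (time-dependent, but explicit) field $v(s,x) := \calK[\xi_H^{\tau(s)}](x) = u_H(x-\tau(s)e_z) + \dot\tau(s)$-type object — more precisely, observe that under the exact Hill flow the ball $B^{\tau(s)}$ is carried to $B^{\tau(s)+W(\cdot)}$, and then control the difference between the two flows by Gronwall, using $\|u(s)-\calK[\xi_H^{\tau(s)}]\|_{L^\infty}\lesssim_M\varep^{1/2}$ and the log-Lipschitz bound on $\calK[\xi_H^{\tau(s)}]$ (which is genuinely Lipschitz, being smooth) over the short interval $[t,t+\alp_0]$. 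Since $\alp_0$ is an absolute-order-one-times-function-of-$M$ constant and the Lipschitz constant of the Hill field is absolute, Gronwall over length $\alp_0$ only inflates the $\varep^{1/2}$ by a constant. This yields that $\phi(t+\alp,(t,B^{\tau(t)}))$ is within Hausdorff/symmetric-difference distance $O_M(\varep^{1/2})$ of $B^{\tau(t)+W\alp}$, which combined with the $E_\varep$-vs-$\phi(B^{\tau(t)})$ comparison from Lemma \ref{lem:no_high_jump} and the lower bound \eqref{claim_hill_1} on $f$ closes the argument. The main obstacle, then, is organizing this two-flow comparison cleanly enough that the non-constancy of $u_H^z$ on the ball does not force a worse-than-$\varep^{1/2}$ error; using the traveling-wave structure of $\xi_H$ (rather than pointwise velocity bounds) is what makes it work.
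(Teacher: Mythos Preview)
Your plan has the right ingredients but skips over the one genuinely circular step, which is exactly where the paper's proof does real work. You propose to compare the flow of $u(s)$ with the flow of $v(s,\cdot)=\calK[\xi_H^{\tau(s)}]=u_H(\cdot-\tau(s)e_z)$, using $\|u(s)-v(s)\|_{L^\infty}\lesssim_M\varep^{1/2}$ from Lemma~\ref{lem:FS} and the Lipschitz bound on $u_H$ for Gronwall. The problem is that the comparison field $v$ depends on the \emph{unknown} function $\tau(s)$, which is not even known to be continuous in $s$; more to the point, the flow of $v$ does \emph{not} carry $B^{\tau(t)}$ to $B^{\tau(t)+W\alp}$. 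In the moving frame $y=x-\tau(s)e_z$ the $v$-flow becomes $\dot y = u_H(y)-\dot\tau(s)e_z$, and this preserves the unit ball only if $\dot\tau\equiv W$, which is precisely what you are trying to prove. If instead you freeze the center and compare with $u_H(\cdot-\tau(t)e_z)$ (whose flow \emph{does} carry $B^{\tau(t)}$ to $B^{\tau(t)+W\alp}$), then the velocity error picks up a term $\lesssim|\tau(s)-\tau(t)|$, and Lemma~\ref{lem:no_high_jump} only controls this for $|s-t|\lesssim_M\varep^{1/2}$, far short of the $\varep$-independent window $[0,\alp_0(M)]$ the proposition requires.

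The paper resolves this circularity by a bootstrap. One compares the $u$-flow with the flow of $u_H(\cdot-(\tau(t_0)+W(s-t_0))e_z)$, which is explicit and carries $B^{\tau(t_0)}$ to $B^{\tau(t_0)+W(s-t_0)}$. This introduces, in addition to your $\varep^{1/2}$ term, a second error term of size $C_{Lip}W^{-1}|\tau(s)-\tau(t_0)-W(s-t_0)|$. One therefore \emph{assumes} (hypothesis (B1')) that $|\tau(s)-\tau(t_0)-W(s-t_0)|\le K\varep^{1/2}$ on $[t_0,t^*]$, runs your Gronwall to get the flow-of-ball containment (B2*) with a much smaller constant, and then uses (B2*) together with a symmetric-difference argument (essentially the $f$-lower-bound step you outlined) to \emph{improve} (B1') to (B1*) with constant $K/8$. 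Lemma~\ref{lem:no_high_jump} is used not just for the $E_\varep$ bookkeeping but crucially to seed the bootstrap on an initial interval of length $\eta(M,\varep)$, after which one iterates in steps of $\eta$ until the whole window $[t_0,t_0+\alp_0]$ is covered. Your write-up has all the analytic pieces (Feng--\v{S}ver\'ak, Lipschitz $u_H$, the $f$-comparison) but is missing this bootstrap structure, without which the argument does not close on an $\varep$-independent time interval.
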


\begin{proof} 

\begin{enumerate}
\item  As before, we denote balls, for $\tau\in\mathbb{R}$ and $a>0$,
$$ {{B}^{\tau,a}}:=\{x\in\mathbb{R}^3\,|\, |x-\tau e_{{z}}|<a\},\quad 
 {{B}^{\tau}}:= {{B}^{\tau,1}}.$$ We will use the following simple estimate later:
\begin{equation}\label{kappa_est}
|B^{0,1+\kappa}\setminus B^{0,1}|=\frac 4 3 \pi \left( (1+\kappa)^3-1^3\right)\leq 30\kappa\quad \mbox{for any }\quad 0\leq \kappa<1/10.
\end{equation} 
 We recall  that $\xi_H(t,x):=\xi_H(x-t{W}e_{{z}})$ (with some abuse of notation) is a traveling wave solution of \eqref{3d_Euler_eq_intro}. The velocity of  Hill's vortex 
 $$u_H(t, x):=\mathcal{K}[\xi_{H}(t,\cdot_x)](x)=\mathcal{K}[\xi_{H}(\cdot_x-t{W}e_{{z}})](x)$$
  is Lipschitz in space-time $\mathbb{R}^3\times\mathbb{R}_{\geq0}$. Let $C_{Lip}>0$ be the Lipschitz constant. We denote
  the particle trajectory map $\phi_H$ obtained from solving the following ODE system:\\
	$$\frac{d}{dt}\phi_H(t,(t_0,x))=u_H(t,\phi_H(t,(t_0,x)))\quad\mbox{for }t>0\quad\mbox{and } \quad \phi_H(t_0,(t_0,x))=x\in\mathbb{R}^3,$$
	
  Then, we simply observe
$$ \phi_H(t,(t_0,{B}^{{W}t_0}))= {B}^{{W}t}, \quad t,t_0\geq 0.
$$

\item   Fix any $M>0$, consider any 
${\varepsilon_0}\in(0,\min(\varepsilon_1,1/4))$, and set 
\begin{equation*}\label{take_K} 
K:=4\tilde{K}>0,
\end{equation*}
 where $\varepsilon_1, \tilde{K}>0$ are the absolute  constants   from Lemma \ref{lem:no_high_jump}. 
 In the proof, ${\varepsilon_0}>0$ will be taken again  small enough (see \eqref{assum_ep_0_ag}).\\

 Let $\xi_0$ satisfy the assumptions in this lemma with the corresponding solution $\xi(t)$ with some $\varepsilon\in(0,{\varepsilon_0})$ and some $M>0$. Let $u(t):=\mathcal{K}[\xi(t)]$ be the corresponding velocity.\\

	\item  
Fix some $t_0\geq 0$ and $\tau(t_{0})$. 
We shall consider the flow of the unit ball region ${B}^{\tau(t_0)}$ via $u(t,\cdot)$. Our proof is done once we show the following statement:	\ \\

	\noindent \textbf{Bootstrap hypotheses}.     We have, for all $t \in [t_0,t_0+\alp_0 ]$,  \begin{equation} \tag{B1'} 
		\begin{split}
			|\tau(t) - (\tau(t_0) + {W}(t-t_0))| \le K \varep^{1/2}
		\end{split}
	\end{equation}  and 
	 \begin{equation}\tag{B2'}
		\begin{split}
\phi(t,(t_0,{B}^{\tau(t_0)}))\subset {B}^{\tau(t_0)+{W}(t-t_0),\, 1+ (1/2)K\varepsilon^{1/2}}, 
		\end{split}
	\end{equation}  where 
	$\phi$ is the particle trajectory map \eqref{traj}.\\
	
	We note that the hypotheses are trivially valid at $t = t_0$. \ \\
	
\item	We first prove the following claim:
	
	\medskip
	
	\indent  \textbf{Initial claim}. 
There exists	 a constant  $\eta=\eta(M,\varepsilon)>0$  such that,  for any $t\in[t_0,t_0+\eta]$, we have
\begin{equation} \tag{B1-} 
		\begin{split}
			|\tau(t) - (\tau(t_0) + {W}(t-t_0))| \le \frac{1}{2}K \varep^{1/2}
		\end{split}
	\end{equation}  
	and 
	 \begin{equation}\tag{B2-}
		\begin{split}
\phi(t,(t_0,x))\in {B}^{\tau(t_0)+{W}(t-t_0),\, 1+ (1/2)K\varepsilon^{1/2}},\quad
x\in {B}^{\tau(t_0)}.
		\end{split}
	\end{equation} 

	\medskip
	The above claim is essentially done by  Lemma \ref{lem:no_high_jump}. Indeed, the lemma says
	$$	|\tau(t) - \tau(t_0)  | \leq \tilde{K}\varepsilon^{1/2}$$
	whenever $t\in[t_0,t_0+ \tilde{C}\varep^{1/2} ]$. Here, $\tilde{C}=\tilde{C}(M)>0, \tilde{K}>0$ are the constants from Lemma \ref{lem:no_high_jump}. Thus, on this interval,
	$$	|\tau(t) - (\tau(t_0) + {W}(t-t_0))| \leq \tilde{K}\varepsilon^{1/2} +{W}|t-t_0|. 
	$$
We take a constant $\eta>0$ so that
$$\eta\leq\tilde{C}\varepsilon^{1/2}\quad\mbox{and}\quad
{W}\eta\leq \tilde{K}\varepsilon^{1/2}. 
$$ This choice of $\eta$ guarantees that   (B1-) holds for any $t\in[t_0,t_0+\eta]$ due to 
$$	|\tau(t) - (\tau(t_0) + {W}(t-t_0))| \leq 2\tilde{K}\varepsilon^{1/2} 
\leq \frac 1 2 K\varepsilon^{1/2}.   $$
 
For  (B2-), we note that the flow speed is uniformly bounded (see \eqref{unif_speed_h}):
\begin{equation}\label{unif_speed}
\|
u(t)
\|_{L^\infty}\leq C_1\sqrt{M},
\end{equation} where $C_1$ is an absolute constant.
Thus, for any $x_0\in {B}^{\tau(t_0),1}$, we have, for any $t\in\mathbb{R}$,
$$
x:=\phi(t,(t_0,x_0))\in {B}^{\tau(t_0),\, 1+ C_1\sqrt{M}|t-t_0|}.$$ Then, we compute
\begin{equation*}
	\begin{split}
		|x-(\tau(t_0)+{W}(t-t_0))e_{{z}}|&\leq |x-\tau(t_0)e_{{z}}|+{W}|t-t_0|\\
		&\leq  \left(1+ C_1\sqrt{M}|t-t_0|\right) +{W}|t-t_0|\\
		&=  1+ (C_1\sqrt{M}+{W})|t-t_0|.
	\end{split}
\end{equation*} 
Once we make $\eta>0$ smaller than before 
 (if necessary) in order to have
 \begin{equation}\label{take_eta}
 (C_1\sqrt{M}+{W})\eta \leq (7/16)K\varepsilon^{1/2},
 \end{equation}
we obtain   (B2-) for the short interval $[t_0,t_0+\eta]$.\\

\item From now on, we may assume that (B1') and (B2') are valid for $t \in [t_0,t^*]$ with some $t^*>t_0$. The existence of such a $t^*$ is guaranteed by \textbf{Initial claim} (B1-) and (B2-).  We shall prove the following \textit{bootstrap} claim: \ \\

\indent  \textbf{Bootstrap claim}. For each $M$, there exists a small  constant $\alp_0>0$ depending \textit{only} on $M$ such that if $t^* \leq  t_0 + \alp_0 
$, 
 then actually (B1') and (B2') hold for any $t\in[t_0,t^*]$ with $1$ and $1/2$ appearing as the coefficients of $K$ replaced with $1/8$ and ${1/240}$, respectively, \textit{i.e.} we claim, for $t\in[t_0,t^*]$,
\begin{equation} \tag{B1*} 
		\begin{split}
			|\tau(t) - (\tau(t_0) + {W}(t-t_0))| \le \frac{1}{8}K \varep^{1/2}
		\end{split}
	\end{equation}  and 
	 \begin{equation}\tag{B2*}
		\begin{split}
\phi(t,(t_0,{B}^{\tau(t_0)}))\subset {B}^{\tau(t_0)+{W}(t-t_0),\, 1+ ({1/240})K\varepsilon^{1/2}}.
		\end{split}
	\end{equation} 

To verify  \textbf{Bootstrap claim} (B2*), we fix
any $x_0\in {B}^{\tau(t_0)}$ and
compute with $$\phi(t):=\phi(t,(t_0,x_0))$$ that 
\begin{equation}\label{decomp_123}
\begin{split}
	\frac{d}{dt} \phi(t)    = u(t,\phi(t) ) &= u(t,\phi(t) ) - u_H({W}^{-1}{\tau(t)}, \phi(t)) \\
	&\qquad  + u_H ({W}^{-1}{\tau(t)},\phi(t)) - u_H({W}^{-1}{\tau(t_0)}+(t-t_0),\phi(t))  \\
	&\qquad + u_H({W}^{-1}{\tau(t_0)}+(t-t_0),\phi(t))\\
& =:I(t)+II(t)+III(t).
\end{split}
\end{equation}
From the stability assumption  \eqref{assum_lem_stab} and the estimate\eqref{est_vel}, we have 
\begin{equation}\label{est_1}
	\begin{split}
	|I(t)|&
=|\mathcal{K}[\xi(t)](\phi(t)) -\mathcal{K}[\xi_{H}(\cdot_x-\tau(t)e_{{z}})](\phi(t))|\\ &	
	\leq
\|\mathcal{K}[\xi(t)-\xi_{H}(\cdot_x-\tau(t)e_{{z}})]\|_{L^\infty}\\ &	
	\leq C\|r^2\left(\xi(t)-\xi_{H}(\cdot_x-\tau(t)e_{{z}})\right)\|_{L^1}^{1/4}\|\xi(t)-\xi_{H}(\cdot_x-\tau(t)e_{{z}})\|_{L^1}^{1/4}\|\xi(t)-\xi_{H}(\cdot_x-\tau(t)e_{{z}})\|_{L^\infty}^{1/2}\\ &  \leq C_2(M+1)^{1/2} \varep^{1/2},\quad t\geq 0,
	\end{split}
\end{equation} where $C_2>0$ is an absolute constant.\\

For $II(t)$, we use Lipschitz continuity (in space-time) of $u_H$ and the hypotheses (B1') on $[t_0,t^*]$ to get,
for any $t\in[t_0,t^*]$,
\begin{equation}\label{est_2}
	\begin{split}
	|II(t)|&\leq
C_{Lip}	\cdot |
	 {W}^{-1}{\tau(t)} -\left({W}^{-1}{\tau(t_0)}+(t-t_0)\right)|\\
	 &=C_{Lip}{W}^{-1}\cdot |\tau(t) - (\tau(t_0) + {W}(t-t_0))| \le C_{Lip}{W}^{-1}\cdot K \varep^{1/2}.
	\end{split}
\end{equation}
We denote
$$\psi(t):=\phi_H({W}^{-1}{\tau(t_0)}+(t-t_0),({W}^{-1}{\tau(t_0)},x_0)).$$ Then $\psi$ satisfies 
$$\psi(t_0)=x_0\quad\mbox{and}\quad  \psi(t)\in {B}^{\tau(t_0)+{W}(t-t_0),\, 1}\quad\mbox{for any} \quad  t\geq t_0$$ and
\begin{equation*}
	\begin{split}\frac{d}{dt}\psi(t)&=
\frac{d}{dt}\phi_H({W}^{-1}{\tau(t_0)}+(t-t_0),({W}^{-1}{\tau(t_0)},x_0))\\&
=u_H({W}^{-1}{\tau(t_0)}+(t-t_0),\phi_H({W}^{-1}{\tau(t_0)}+(t-t_0),({W}^{-1}{\tau(t_0)},x_0)))\\
&=u_H({W}^{-1}{\tau(t_0)}+(t-t_0),\psi(t)). \end{split}
\end{equation*}
Using the bounds \eqref{est_1}, \eqref{est_2} and comparing the equations for $\phi$ and $\psi$, we see that, for $t\in[t_0,t^*]$, \begin{equation*}
\begin{split}
	\frac{d}{dt}|\phi(t) - \psi(t)| & \le |u_H({W}^{-1}{\tau(t_0)}+(t-t_0),\phi(t)) -u_H({W}^{-1}{\tau(t_0)}+(t-t_0),\psi(t)) |  \\
	&\qquad\qquad +C_2(M+1)^{1/2} \varep^{1/2}+C_{Lip}{W}^{-1}\cdot K \varep^{1/2} \\
	& \le C_{Lip}|\phi(t)-\psi(t)| + C_3\cdot(M+1) \varep^{1/2},
\end{split}
\end{equation*} where $C_3>0$ is an absolute constant.
With Gronwall's inequality, we deduce for $t\in[t_0,t^*]\subset[t_0,t_0+\alpha_0 ]$ that \begin{equation*}
\begin{split}
	|\phi(t)-\psi(t)|& \le e^{C_{Lip}(t^*-t_0)}\cdot\int_{t_0}^{t^*}    
C_3(1+M)\varep^{1/2} ds. \\
	& \le   \left( e^{ C_{Lip}\alp_0  }C_3(1+M)\alp_0\right)\cdot  \varep^{1/2}.  
\end{split}
\end{equation*} We take $\alp_0=\alp_0(M)>0$ small enough so that
$$
\left(e^{ C_{Lip}\alp_0  }C_3(1+M)  \alp_0\right)\leq \frac 1 {240} K.
$$
Since we know $\psi(t)\in {B}^{\tau(t_0)+{W}(t-t_0),\, 1}$, the above estimate shows
	 \begin{equation*} 
		\begin{split}
\phi(t)\in {B}^{\tau(t_0)+{W}(t-t_0),\, 1+ ({1/240})K\varepsilon^{1/2}},
		\end{split}
	\end{equation*} which is \textbf{Bootstrap claim} (B2*) on $[t_0,t^*]$ whenever $t^*\leq t_0+\alpha_0$.\\

	To prove (B1*) on $[t_0,t^*]$ when $t^*\leq t_0+\alpha_0$, we
	denote   $$A_t:=\phi(t,(t_0,{B}^{\tau(t_0)})),\quad  t\geq t_0,
	$$ and decompose$$
	\xi(t,x)=\xi(t,x){\mathbf{1}}_{A_t}(x)+\xi(t,x){\mathbf{1}}_{\mathbb{R}^3\setminus A_t}(x)=:\Omega^1(t,x)+\Omega^2(t,x).
	$$

As before, we 	denote, for $\tau\in\mathbb{R}$, 
$$\xi_H^{\tau}:=\xi_H(\cdot_x-\tau e_{{z}}).$$ Then we note $$\|\Omega^2(t)\|_{L^1}=\|\Omega^2(t_0)\|_{L^1}\leq
	\|\xi(t_0)-\xi_H^{\tau(t_0)}\|_{L^1({\mathbb{R}^3\setminus A_{t_0}})}\leq
	\|\xi(t_0)-\xi_H^{\tau(t_0)}\|_{L^1}
	\leq  \varepsilon$$ since 	 $\xi_H^{\tau(t_0)}$ is supported in $A_{t_0}={B}^{\tau(t_0)}$. \\
	
Towards	a contradiction, let us assume that (B1*) on $[t_0,t^*]$ with   $t^*\leq t_0+\alpha_0$ fails, \textit{i.e.}  there is some $t'\in[t_0,t^*]$ satisfying
\begin{equation*}  
		\begin{split}
			|\tau(t') - (\tau(t_0) + {W}(t'-t_0))| > \frac{1}{8}K \varep^{1/2}.
		\end{split}
	\end{equation*} We may assume $$\tau(t') - (\tau(t_0) + {W}(t'-t_0)) > \frac{1}{8}K \varep^{1/2},$$ since the other case can be considered similarly.
Because we already obtained   (B2*) on $[t_0,t^*]$ with   $t^*\leq t_0+\alpha_0$,	 we recall  
$$A_{t'}\subset  {B}^{\tau(t_0)+{W}(t'-t_0),\, 1+ ({1/240})K\varepsilon^{1/2}}
$$  so we have
			 \begin{equation} \label{decomp}
		\begin{split}
		\|\xi(t')-\xi_H^{\tau(t')}\|_{L^1}&\geq 
		\|\Omega^1(t')-\xi_H^{\tau(t')}\|_{L^1}-\|\Omega^2(t')\|_{L^1}\\
		&\geq \|\Omega^1(t')-\xi_H^{\tau(t')}\|_{L^1(\mathbb{R}^3\setminus A_{t'})}-\varepsilon= \| \xi_H^{\tau(t')}\|_{L^1(\mathbb{R}^3\setminus A_{t'})}-\varepsilon
 		\end{split}
	\end{equation}	and 
			 \begin{equation*} 
		\begin{split}
& \| \xi_H^{\tau(t')}\|_{L^1(\mathbb{R}^3\setminus A_{t'})}\geq 
 \| \xi_H^{\tau(t')}\|_{L^1(\mathbb{R}^3\setminus  {B}^{\tau(t_0)+{W}(t'-t_0),\, 1+ ({1/240})K\varepsilon^{1/2}})} 	
 	 \\&\quad \geq
  \| \xi_H^{\tau(t')}\|_{L^1(\mathbb{R}^3\setminus
{B}^{\tau(t_0)+{W}(t'-t_0),\, 1}
  )}-|{B}^{\tau(t_0)+{W}(t'-t_0),\, 1+ ({1/240})K\varepsilon^{1/2}}\triangle {B}^{\tau(t_0)+{W}(t'-t_0),\, 1 }|  \\
  &\quad =:I(t')-II(t').
 		\end{split}
	\end{equation*} 
	We assume $\varepsilon_0>0$ small enough (if necessary) to guarantee
	\begin{equation}\label{assum_ep_0_ag}
		 K\varepsilon_0^{1/2} \leq 2	
		\quad\mbox{and}\quad
		 	\varepsilon_0^{1/2}\leq \frac{1}{24}K.		
	\end{equation}
	For $II(t')$, we use 
\eqref{kappa_est} to get	
	simply have
	$$
	|II(t')|\leq 30 \cdot ({1/240})K\varepsilon^{1/2}=   ({1/8})K\varepsilon^{1/2}.
	$$ Here
we used the assumption \eqref{assum_ep_0_ag}.	
 	For $I(t')$, we estimate 
	 \begin{equation*} 
		\begin{split}
I(t')&=   |B^{\tau(t')}\setminus
{B}^{\tau(t_0)+{W}(t'-t_0),\, 1}
  ) |=\frac{1}{2}
   |B^{\tau(t')}\triangle
{B}^{\tau(t_0)+{W}(t'-t_0),\, 1}
  ) |\\
  &=\frac 1 2 f\left(\tau(t')-\left(
 \tau(t_0)+{W}(t'-t_0)\right)\right)\\
  		&\geq 
 		\frac 1 2 \cdot\frac 4 3 \pi\cdot \left(\tau(t')-\left(
 \tau(t_0)+{W}(t'-t_0)\right)\right),
 		\end{split}
	\end{equation*} 
 where the last inequality follows from \eqref{claim_hill_1}	 thanks to the assumption \eqref{assum_ep_0_ag}. Thus, we obtain
  \begin{equation*} 
		\begin{split}
  \| \xi_H^{\tau(t')}\|_{L^1(\mathbb{R}^3\setminus A_{t'})} 	&\geq I(t')-II(t')\\
  &\geq 	\frac 1 2 \cdot\frac 4 3 \pi\cdot \left(\tau(t')-\left(
 \tau(t_0)+{W}(t'-t_0)\right)\right) - ({1/8})K\varepsilon^{1/2}\\
 &\geq \frac 1 2 \cdot\frac 4 3 \pi\cdot ({1/8})K\varepsilon^{1/2} - ({1/8})K\varepsilon^{1/2}\geq ({1/8})K\varepsilon^{1/2}.
 		\end{split}
	\end{equation*} 
	 Going back to \eqref{decomp}, we conclude	 
	 	 \begin{equation*} 
		\begin{split}
		\|\xi(t')-\xi_H^{\tau(t')}\|_{L^1}&\geq 
	\| \xi_H^{\tau(t')}\|_{L^1(\mathbb{R}^3\setminus A_{t'})}-\varepsilon\\
	&\geq  ({1/8})K\varepsilon^{1/2} -\varepsilon\geq 2 \varepsilon,
 		\end{split}
	\end{equation*} 
where we used \eqref{assum_ep_0_ag} again for the last inequality,	
	which violates the condition \eqref{assum_lem_stab}. Hence, we obtained (B1*) on $[t_0,t^*]$.

\item We are in a position to finish the proof since we can slightly extend the time interval $[t_0,t^*]$ on which (B1') and (B2') hold (and therefore also (B1*) and (B2*)) since 
		 we can apply \textbf{Initial claim} (B1-) 
		 to the time interval $[t^*,t^*+\eta]$, where
the constant $\eta=\eta(M,\varepsilon)>0$ was chosen in step 4.
 More precisely,  \textbf{Initial claim} (B1-) implies for any $t\in[t^*,t^*+\eta]$,
	\begin{equation}  
		\begin{split}
			|\tau(t) - (\tau(t^*) + {W}(t-t^*))| \le \frac 1 2 K \varep^{1/2}.
		\end{split}
	\end{equation}  
We add the above estimate into (B1*) (for $t=t^*$) in order to obtain, for $t\in[t^*,t^*+\eta]$,
$$ 
|\tau(t) - (\tau(t_0) + {W}(t-t_0))| \le \frac{1}{8}K \varep^{1/2}+\frac 1 2 K \varep^{1/2}\leq  K \varep^{1/2}.
$$ Thus we have (B1') on the extended interval $[t_0,t^*+\eta]$. \\

To extend (B2') up to $[t_0,t^*+\eta]$, we recall (B2*) for $t=t^*$:
$$\phi(t^*,(t_0,{B}^{\tau(t_0)}))\subset {B}^{\tau(t_0)+{W}(t^*-t_0),\, 1+ ({1/240})K\varepsilon^{1/2}}.$$ Thanks to the uniform bound \eqref{unif_speed} of the flow speed, we have, 
for  $t\in[t^*, t^*+\eta]$,
  	 \begin{equation} \label{comp_short_eat}
		\begin{split}
\phi(t,(t_0,{B}^{\tau(t_0)}))
&\subset \phi(t,(t^*,
 {B}^{\tau(t_0)+{W}(t^*-t_0),\, 1+ ({1/240})K\varepsilon^{1/2}}))\\
 &\subset   {B}^{\tau(t_0)+{W}(t^*-t_0),\, 1+ ({1/240})K\varepsilon^{1/2}+C_1\sqrt{M}\eta}
		\end{split}
	\end{equation}
	
	We claim   
	\begin{equation}\label{temp_claim}
		{B}^{\tau(t_0)+{W}(t^*-t_0),\, 1+ ({1/240})K\varepsilon^{1/2}+C_1\sqrt{M}\eta}\subset {B}^{\tau(t_0)+{W}(t-t_0),\, 1+ (1/2)K\varepsilon^{1/2}}.
	\end{equation}
Indeed, for any $y\in {B}^{\tau(t_0)+{W}(t^*-t_0),\, 1+ ({1/240})K\varepsilon^{1/2}+C_1\sqrt{M}\eta}$, 	we compute
\begin{equation*} 
		\begin{split}
 |y-\left(\tau(t_0)+{W}(t-t_0)\right)e_{{z}}|&\leq
 |y-\left(\tau(t_0)+{W}(t^*-t_0)\right)e_{{z}}|+{W}|t^*-t|\\ 
  &\leq  1+ ({1/240})K\varepsilon^{1/2}+C_1\sqrt{M}\eta+{W}\eta.
		\end{split}
	\end{equation*}	
		Since we assumed the condition \eqref{take_eta}, we obtained the above claim \eqref{temp_claim}.		
 Together with
\eqref{comp_short_eat}, it implies,
	  for $t\in[t^*, t^*+\eta]$,
	$$\phi(t,(t_0,{B}^{\tau(t_0)}))\subset  {B}^{\tau(t_0)+{W}(t-t_0),\, 1+ (1/2)K\varepsilon^{1/2}},$$ which gives (B2') on the extended interval $[t_0,t^*+\eta]$.\\
 
	 Next, we can obtain \textbf{Bootstrap claim} (B1*) and (B2*) on the extended interval $[t_0,t^*+\eta]$. Then we simply repeat the same process above to get that (B1*) and (B2*) are actually valid on $[t_0,t^*+k\eta]$ for any $k\in\mathbb{N}$ until (B1') and (B2') are covered on the \textit{full} interval $[t_0,t_0+\alpha_0]$. This finishes the proof. \qedhere  \end{enumerate} 
\end{proof}



\subsection{Proof of Theorem  \ref{thm:B}} 
\begin{proof}

(I) The orbital stability  can be found in    \cite[Theorem 1.2]{Choi2020} (or just see Theorem \ref{thm_hill_gen}). Here, we can simply take $\tau(0)=0$ (by assuming $\delta(\varepsilon)\leq \varepsilon$ if necessary).\\

(II) The estimate for the shift function $\tau$  is a direct consequence of Proposition \ref{prop:travel-speed_hill}. Indeed, 
 a simple summation  
of the estimate \eqref{eq:travel-est_hill} 
  gives \begin{equation*}
	\begin{split}
		|\tau(t) - \tau(0) -{W}t| \le (K/\alpha_0) (t+\alpha_0)\varepsilon^{1/2}, \qquad t\ge 0. \qedhere
	\end{split}
\end{equation*}  
\end{proof}

 \subsection{Finite-time growth in vorticity (Proof of Corollary \ref{cor:vor:grow})}
In order to obtain growth of the vorticity, we first prove a finite-time stability result using Theorem \ref{thm:B}. 
 \begin{lemma}\label{lem_finite_gro} 
 For each $M>0$, $T>0$, and  $\lambda_0\in(0,1)$, there exists $\varepsilon_2 = \varep_2(M,T,\lambda_0)>0$ such that 
for any non-negative axi-symmetric $\xi_{0}$ satisfying $\xi_{0}, r\xi_{0} \in L^{\infty}(\bbR^3)$, and
$\|\xi_0\|_{L^\infty}\leq M$,
if  there is a shift function $\tau(\cdot):[0,\infty)\to\mathbb{R}$ satisfying
\begin{align*}
	\sup_{t\geq 0}\left\{	\|\xi(\cdot+\tau(t) e_{z},t)-\xi_H\|_{L^1\cap L^2(\mathbb{R}^3)}
		+\|r^2(\xi(\cdot+\tau(t) e_{z},t)-\xi_H)\|_{L^1(\mathbb{R}^3)} \right\}\leq \varepsilon
	\end{align*}  for some $\varepsilon\in(0,\varepsilon_2)$,
	then we have, for any $t\in[0,T]$,
	 	 \begin{equation}\label{lem_finite_concl}
		\begin{split}
		&	|\tau(t)-{W}t| \le \lambda_0\quad\mbox{and}\quad \phi(t,(0,{S}^{0}))\subset {S}^{{W}t,\,  \lambda_0},
		\end{split}
	\end{equation}  where 
	$\phi$ is the particle trajectory map  \eqref{traj},
	 and $S^{\tau,a}$ are spherical shells defined by ($\tau\in\mathbb{R}, a\in[0,1]$) 
$$ {{S}^{\tau,a}}:=\{x\in\mathbb{R}^3\,|\, 1-a\leq  |x-\tau e_{{z}}|\leq 1+a\},\quad 
 {{S}^{\tau}}:= {{S}^{\tau,0}}.$$
 \end{lemma}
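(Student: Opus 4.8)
The statement is a finite-time refinement of the infinite-time estimate \eqref{eq:travel-est_hill}: given the (time-uniform) orbital smallness with parameter $\varep$, we want both the shift $\tau(t)$ and the image of the unit spherical shell under the true flow to stay within $\lambda_0$ of the Hill's vortex position $Wt$ on the \emph{fixed} interval $[0,T]$, at the cost of shrinking $\varep$ in terms of $M, T, \lambda_0$. The first half of \eqref{lem_finite_concl}, namely $|\tau(t)-Wt|\le\lambda_0$, is essentially immediate: Theorem \ref{thm:B}(II) (equivalently, summing \eqref{eq:travel-est_hill} over $\lceil T/\alpha_0\rceil$ steps exactly as in the proof of Theorem \ref{thm:B}) gives $|\tau(t)-\tau(0)-Wt|\le (K/\alpha_0)(T+\alpha_0)\varep^{1/2}$ for $t\in[0,T]$, and since we may assume $\tau(0)=0$ (taking $\delta(\varep)\le\varep$ as in the proof of Theorem \ref{thm:B}(I)), choosing $\varep_2$ small enough that $(K/\alpha_0)(T+\alpha_0)\varep_2^{1/2}\le \lambda_0$ finishes this part. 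Note $\alpha_0=\alpha_0(M)$, $K$ absolute, so the smallness threshold here depends only on $M,T,\lambda_0$, as required.

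For the second half, the plan is to run a short-time comparison between the true trajectory $\phi(t,(0,x_0))$ for $x_0\in S^0$ and the trajectory $\psi(t):=\phi_H(W^{-1}\tau(t_0)+(t-t_0),(W^{-1}\tau(0),x_0))$ of the Hill velocity field starting from the same point, exactly mimicking the decomposition \eqref{decomp_123} in the proof of Proposition \ref{prop:travel-speed_hill}: write $\frac{d}{dt}\phi = u(t,\phi) = I + II + III$ where $I$ is the difference between $\calK[\xi(t)]$ and $\calK[\xi_H^{\tau(t)}]$ evaluated at $\phi$, controlled by $C(M+1)^{1/2}\varep^{1/2}$ via Lemma \ref{lem:FS} and \eqref{assum_lem_stab}; $II$ is the Lipschitz-in-time error of $u_H$ coming from replacing the argument $W^{-1}\tau(t)$ by $W^{-1}\tau(0)+t$, controlled by $C_{Lip}W^{-1}|\tau(t)-Wt|\le C_{Lip}W^{-1}\cdot(K/\alpha_0)(T+\alpha_0)\varep^{1/2}$ using the bound from the first paragraph; and $III=u_H(\cdots,\phi)$, which we compare to $u_H(\cdots,\psi)$ via the space-Lipschitz bound $C_{Lip}|\phi-\psi|$. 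Gronwall on $[0,T]$ then yields $|\phi(t)-\psi(t)|\le e^{C_{Lip}T}\cdot C(M+1)^{1/2}(1+C_{Lip}W^{-1}K/\alpha_0)(T+\alpha_0)\cdot T\cdot\varep^{1/2}$, and since $\psi(t)\in S^{Wt,0}$ (the Hill flow preserves the unit sphere, because it preserves the ball $B^{Wt}$ and its boundary), we conclude $\phi(t,(0,x_0))\in S^{Wt,\lambda_0}$ once $\varep_2$ is chosen small enough — again depending only on $M,T,\lambda_0$ since all constants are either absolute or functions of $M$ alone.

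The one point requiring a little care — and the place I'd expect the main (minor) obstacle — is that the argument in the previous paragraph needs the time-uniform bound $|\tau(t)-Wt|\lesssim\varep^{1/2}$ \emph{before} running the Gronwall comparison, which is fine because that bound is an output of Proposition \ref{prop:travel-speed_hill}/Theorem \ref{thm:B}(II) and depends only on $M$; so there is no circularity. One must also make sure the orbital assumption \eqref{assum_lem_stab} is invoked with $\varep<\varep_0$ (the threshold of Proposition \ref{prop:travel-speed_hill}) and $\varep<\varep_1$ (Lemma \ref{lem:no_high_jump}), so we simply take $\varep_2\le\min(\varep_0,\varep_1)$ in addition to the two quantitative smallness requirements above. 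Finally, the claim that $\phi_H$ maps the unit sphere $S^{W t_0}$ at time-parameter $t_0$ to $S^{Wt}$ at time-parameter $t$ follows from the already-recorded identity $\phi_H(t,(t_0,B^{Wt_0}))=B^{Wt}$ together with the fact that $\phi_H(t,\cdot)$ is a homeomorphism (so it maps boundary to boundary); since $S^{Wt}=\partial B^{Wt}$ this is exactly what we need. Assembling these, setting $\varep_2$ to be the minimum of all the thresholds above, completes the proof.
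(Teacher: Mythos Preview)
The proposal is correct and follows essentially the same approach as the paper's proof: invoke Theorem~\ref{thm:B}(II) (equivalently, Proposition~\ref{prop:travel-speed_hill}) to get $|\tau(t)-Wt|\lesssim_M (T+1)\varep^{1/2}$ on $[0,T]$, then run the same $I+II+III$ decomposition \eqref{decomp_123} and Gronwall comparison with the Hill trajectory $\psi(t)=\phi_H(t,(0,x_0))\in S^{Wt,0}$ to conclude $\phi(t,(0,x_0))\in S^{Wt,\lambda_0}$. The only cosmetic difference is that the paper routes the bookkeeping through an auxiliary parameter $\lambda_1\in(0,\lambda_0)$ (chosen small via \eqref{lam_1}) rather than working directly with $\varep^{1/2}$, but the content is identical.
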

 \begin{proof}
 First, 
we fix any    $M>0$,   $T>0$, and   $\lambda_0\in(0,1)$. Let $\lambda_1\in(0,\lambda_0)$ be a constant which will be chosen small later (see \eqref{lam_1}). 
  We assume  $\varepsilon_2\in(0,\varepsilon_0)$ small enough to satisfy
\begin{equation}\label{ep_2}
  C_M {\varep_2^{\frac12}(T +1)}\leq \lambda_1, 
\end{equation}  where $\varepsilon_0, C_M$ are the constants from Theorem \ref{thm:B}.
Then
 the estimate \eqref{main_est} on a shift $\tau(t)$ of Theorem \ref{thm:B} becomes \begin{equation}\label{est_tau_lam}
  			|\tau(t)-{W}t| \le C_M {\varep^{\frac12}(t +1)}
  			\leq \lambda_1\leq \lambda_0,\quad t\in[0,T],
\end{equation} which gives the first estimate \eqref{lem_finite_concl}.\\

For the second estimate,  we proceed as in the proof of \textbf{Bootstrap claim} (B2*) in Proposition \ref{prop:travel-speed_hill}. Indeed, we fix
any $x_0\in S^0$, denote
  $$\phi(t):=\phi(t,(t_0,x_0)),$$ and use the same decomposition \eqref{decomp_123} (with $t_0=0$):
\begin{equation*}
\begin{split}
	\frac{d}{dt} \phi(t)    = u(t,\phi(t) ) &= u(t,\phi(t) ) - u_H({W}^{-1}{\tau(t)}, \phi(t)) \\
	&\qquad  + u_H ({W}^{-1}{\tau(t)},\phi(t)) - u_H(t,\phi(t))  \\
	&\qquad + u_H(t,\phi(t))\\
& =:I(t)+II(t)+III(t).
\end{split}
\end{equation*} 
Then as in \eqref{est_1}, we have 
\begin{equation*}
	\begin{split}
	|I(t)|&
  \leq C_2(M+1)^{1/2} \varep^{1/2},\quad t\geq 0,
	\end{split}
\end{equation*} where $C_2>0$ is an absolute constant.\\

For $II(t)$, as in \eqref{est_2}, we obtain,
for any $t\in[0,T]$,
\begin{equation*}
	\begin{split}
	|II(t)|&\leq
C_{Lip}	\cdot |
	 {W}^{-1}{\tau(t)} -t|\\
	 &=C_{Lip}{W}^{-1}\cdot |\tau(t) -{W}t| \le C_{Lip}{W}^{-1}\cdot \lambda_1,	\end{split}
\end{equation*}
where $C_{Lip}>0$ is the (space-time) Lipschitz constant of the velocity for Hill's vortex $\xi_H$.
 Here we used \eqref{est_tau_lam} in the last inequality.\\

Denoting
$$\psi(t):=\phi_H(t,(0,x_0)),$$ it satisfies 
$$\psi(t_0)=x_0\quad\mbox{and}\quad  \psi(t)\in {S}^{{W}t,\, 0}\quad\mbox{for any} \quad  t\geq 0$$ and
\begin{equation*}
	\begin{split}\frac{d}{dt}\psi(t)&=u_H(t,\psi(t)). \end{split}
\end{equation*}
The above bounds implies, for $t\in[0,T]$, \begin{equation*}
\begin{split}
	\frac{d}{dt}|\phi(t) - \psi(t)| & \le |u_H(t,\phi(t)) -u_H(t,\psi(t)) |  +C_2(M+1)^{1/2} \varep^{1/2}+C_{Lip}{W}^{-1}\cdot \lambda_1\\
	 & \le  C_{Lip}|\phi(t)-\psi(t)|  +\underbrace{\left( \frac{C_2(M+1)^{1/2}}{C_M(T+1)} +C_{Lip}{W}^{-1}\right)}_{=:\hat{C}} {\lambda_1},
\end{split}
\end{equation*} where we have used \eqref{ep_2}. With Gronwall's inequality, we deduce for $t\in[0,T]$ that \begin{equation*}
\begin{split}
	|\phi(t)-\psi(t)|& \le e^{C_{Lip}T}\cdot\int_{0}^{T}    
\hat{C}\lambda_1 ds 
 \le     e^{ C_{Lip}T  }\hat{C} T \lambda_1.
\end{split}
\end{equation*}  
We assume $\lambda_1\in(0,\lambda_0)$ small enough so that
\begin{equation}\label{lam_1}
  e^{ C_{Lip}T  }\hat{C} T \lambda_1\leq \lambda_0,
\end{equation} which implies 
\begin{equation*}
\begin{split}
	|\phi(t)-\psi(t)| \leq  \lambda_0, \quad t\in[0,T].
\end{split}
\end{equation*}  
Since $\psi(t)\in {S}^{{W} t,\, 0}$, the above estimate shows that 
	 \begin{equation*} 
		\begin{split}
\phi(t)\in {S}^{{W} t,\, \lambda_0},\quad t\in[0,T]. \qedhere 
		\end{split}
	\end{equation*}  \end{proof}
 Now we are ready to prove finite time growth in vorticity. 
\begin{proof}[Proof of Corollary \ref{cor:vor:grow}]

 \begin{enumerate}

  \item  We recall 
   the explicit expression 
\eqref{vel_explicit}   
   of the Lipschitz velocity $u_H$ of Hill's vortex $\xi_H$. In particular, the information 
$$u^r_H(x)|_{r\geq 0,\,z=1}   =\frac{3}{2} {W}\frac{r}{(r^2+1)^{5/2}}$$
guarantees that there is a constant  $a \in(0,1/10)$ such that
  if we set a cylinder
\begin{equation}
F:=\{
x\in\mathbb{R}^3\,|\, 1-a
\leq z\leq 1+a
 \quad\mbox{and} \quad 0\leq r \leq \frac 1 {10}
\}
\end{equation}  then the velocity on this cylinder satisfies
 \begin{equation}
\frac 1 {5} {W}r\leq  u^r_H(x)\leq 5 {W}r,\quad x\in F.
 \end{equation}
 
 \item 
For each $\tau\in\mathbb{R}, \lambda\in[0,1]$, we denote 
\begin{equation*}\label{E_la}
E^{\tau,\lambda}
:=
S^{\tau,\lambda}\cap \{ 0\leq r\leq \frac 1 {10},\quad z\geq \tau\},\quad 
E^{\tau}:=E^{\tau,0},
\end{equation*}  where spherical shells ${{S}^{\tau,\lambda}}$ are defined by
($\tau\in\mathbb{R}$)
$$ {{S}^{\tau,\lambda}}:=\{x\in\mathbb{R}^3\,|\, 1-\lambda\leq  |x-\tau e_{{z}}|\leq 1+\lambda\},\quad 
 {{S}^{\tau}}:= {{S}^{\tau,0}}.$$
 We can easily verify that there is $\lambda_0>0$ (small depending on $a$) such that,
 
  $$E^{\tau,\lambda_0} \subset F\quad \mbox{for any}\quad |\tau|\leq \lambda_0.$$ It guarantees,
   for any $ x\in E^{\tau,\lambda_0}$ with $ |\tau|\leq \lambda_0$,
 \begin{equation}\label{est_ur_e}
\frac 1 {5} {W}r\leq  u^r_H(x)\leq 5 {W}r.
 \end{equation}
 \item Let $L> 1$ and set 
$$r_0:=\frac 1 {10 L},\quad M:=\frac{2}{r_0},\quad c:=({W}/10)^{-1}, \quad T:=c\log(L).$$ 	We fix a point $x_0=(x_0^1,x_0^2,x_0^3)\in E^0\subset  S^0\subset \mathbb{R}^3$ satisfying 
$ |(x_0^1,x_0^2,0)|=r_0$ (\textit{i.e.} $r(x_0)=r_0$).\\

By applying Lemma \ref{lem_finite_gro} for the above constants $M, T, \lambda_0$, we take 
$\varepsilon_2>0$ from  Lemma \ref{lem_finite_gro}. Let $\varepsilon\in(0,\varepsilon_2)$ which will be taken small later (see \eqref{small_ep_finite}). Consider $\delta=\delta(\varepsilon)>0$ from    Theorem \ref{thm:B}.\\

Now we take a  $C^\infty$--smooth initial data $\xi_{0}\in L^{\infty}(\bbR^3)$
which is compactly supported, axi-symmetric,  non-negative,  
$\|\xi_0\|_{L^\infty}\leq M$, and 
 \begin{align*}
		\|\xi_0-\xi_H\|_{L^1\cap L^2(\mathbb{R}^3)}
		+\|r^2(\xi_0-\xi_H)\|_{L^1(\mathbb{R}^3)} 
		\leq \delta.
	\end{align*} 
	 	 In addition, we assume 
	\begin{equation}
	\xi_0(x_0)=\frac M 2 
	\end{equation} and
	$$\|\omega_0\|_{L^\infty(\mathbb{R}^3)}\leq 1
$$
	 (by recalling the relation $\omega=\omega^\theta e_\theta= r \xi e_\theta$).
	
\item By 
Theorem \ref{thm:B},   there is a shift function $\tau(\cdot):[0,\infty)\to\mathbb{R}$ satisfying
\begin{align*}
	\sup_{t\geq 0}\left\{	\|\xi(\cdot+\tau(t) e_{z},t)-\xi_H\|_{L^1\cap L^2(\mathbb{R}^3)}
		+\|r^2(\xi(\cdot+\tau(t) e_{z},t)-\xi_H)\|_{L^1(\mathbb{R}^3)} \right\}\leq \varepsilon.
	\end{align*}  
	Together with the estimate \eqref{est_vel}, the above (orbital) stability implies
\begin{equation}\label{est_3}
	\begin{split}
&\| u(t) -\mathcal{K}[\xi_{H}(\cdot_x-\tau(t)e_{{z}})]\|_{L^\infty}= 
\|\mathcal{K}[\xi(t)-\xi_{H}(\cdot_x-\tau(t)e_{{z}})]\|_{L^\infty}\\ &	\quad
	\leq C\|r^2\left(\xi(t)-\xi_{H}(\cdot_x-\tau(t)e_{{z}})\right)\|_{L^1}^{1/4}\|\xi(t)-\xi_{H}(\cdot_x-\tau(t)e_{{z}})\|_{L^1}^{1/4}\|\xi(t)-\xi_{H}(\cdot_x-\tau(t)e_{{z}})\|_{L^\infty}^{1/2}\\ & \quad \leq C_2(M+1)^{1/2} \varep^{1/2},\quad t\geq 0,
	\end{split}
\end{equation} where $C_2>0$ is an absolute constant.\\

From now on, we assume $\varepsilon\in(0,\varepsilon_2)$ small enough to have
\begin{equation}\label{small_ep_finite}
 C_2(M+1)^{1/2} \varep^{1/2}\leq \frac 1 {10} {W} r_0.
\end{equation}

	On the other hand,  \eqref{lem_finite_concl} from Lemma \ref{lem_finite_gro} implies, for any $t\in[0,T]$,
		 	 \begin{equation}\label{S_est}
		\begin{split}
|\tau(t)-{W}t| \le \lambda_0 \quad\mbox{and}\quad \phi(t):=\phi(t,(0,x_0))\in {S}^{{W}t,\,  \lambda_0}.
		\end{split}
	\end{equation} 
	\item We observe, for $t\in[0,T]$, that as long as $\phi(t)$ stays
	in $E^{t{W},\lambda_0}$, 	
	we can use the estimate \eqref{est_ur_e} to obtain
	\begin{equation}\label{phi_r_est_finite}
\phi^r(t) \mbox{ is increasing in } t \quad  \mbox{and}\quad
	  \phi^r(t) \geq r_0\exp(  c^{-1} t).
	\end{equation}
Indeed, 	since
$$
\phi(t)\in E^{t{W},\lambda_0}\quad\Rightarrow\quad  \phi(t)-\tau(t)e_z
\in E^{t{W}-\tau(t),\lambda_0}
	$$
we have, by using \eqref{S_est}, \eqref{est_ur_e}, \eqref{est_3}, \eqref{small_ep_finite},
	 	 \begin{equation*}
		\begin{split}
 u^r(t,\phi(t))&\geq u_H^r(\phi(t)-\tau(t)e_z)-| u^r(t,\phi(t))-u_H^r(\phi(t)-\tau(t)e_z)|\\
& \geq  
\frac 1 5 {W} \phi^r(t)
-C_2(M+1)^{1/2} \varep^{1/2}
\geq \frac 1 {5} {W} \phi^r(t)-\frac 1 {10} {W} \phi^r(0).
		\end{split}
	\end{equation*} 
This estimate implies for each $t'\in(0,T]$ that 
$$
\phi^r(t)\geq \frac 1 2 \phi^r(0)\quad\mbox{on}\quad t\in[0,t'] \qquad
\Rightarrow\qquad  u^r(t,\phi(t))\geq 
0
\quad\mbox{on}\quad t\in[0,t'] 
$$	 while the converse  is also true.
	 Thus we obtain the first part (monotonicity) of \eqref{phi_r_est_finite} as long as $\phi(t)$ stays
	in $E^{t{W},\lambda_0}$ (\textit{e.g.} by a  bootstrap argument). As a result, we obtain
		 	 \begin{equation*}
		\begin{split}
 u^r(t,\phi(t))& 
\geq \frac 1 {5} {W} \phi^r(t)-\frac 1 {10} {W} \phi^r(0)\geq 
\frac 1 {10} {W} \phi^r(t),
		\end{split}
	\end{equation*} which implies the second part of \eqref{phi_r_est_finite}.
	
	
\item 	 Denote $[0,T^*]$($\subset[0,T]$) be the maximal time interval 	satisfying$$\phi(t) \in E^{t{W},\lambda_0}\quad \mbox{for all}\quad t\in[0,T^*].$$	Then it is obvious that $T^*>0$ by continuity, and there are two cases: either $T^*=T$ or $T^*<T$. When the first case occurs, we obtain
	$$  \phi^r(T) \geq r_0\exp(  c^{-1} T)=\frac 1{10}.$$ Thus, for this case $T^*=T$,  we conclude
	$$\|\omega(T)\|_{L^\infty}\geq |\omega(T,\phi(T))|=|\phi^r(T)\xi(T,\phi(T))|\geq \frac 1 {10}|\xi_0(x_0)|=\frac M {20}=L.$$
Lastly, let's suppose that $T^*<T$ holds. Thanks to 
\eqref{S_est} and continuity of $\phi$, we see that $\phi(t)$ at $t = T^*$ should lie on  the ``upper'' boundary of the set
$E^{T^*{W},\lambda_0}$, i.e. 
	$$\phi(T^*)\in  \{r=\frac 1 {10}\}.$$
 As before, it implies  \begin{equation*}
		\begin{split}
			\|\omega(T^*)\|_{L^\infty}\geq L. \qedhere 
		\end{split}
	\end{equation*}
	
 \end{enumerate} 
\end{proof}

 \color{black}
 
 \begin{figure}\centering
 	 \includegraphics{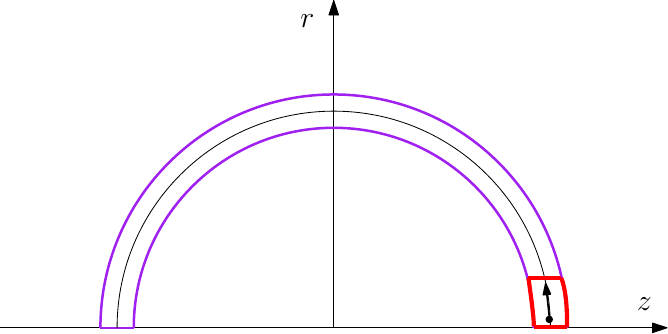}  
 	\caption{A schematic diagram for the proof of Corollary \ref{cor:vor:grow}. Two thickened regions represent $S^{\tau,\lmb}$ (large) and $E^{\tau,\lmb}$ (small), respectively. In the frame moving with velocity $We^z$, fluid particles in the interior of $E^{\tau,\lmb}$ ``climb up'' in the $r$-direction, leading to growth of the vorticity in $L^\infty$.}
 	\label{fig1}
 \end{figure}
\section{Dynamics of the perturbed Hill's vortex}
 
\subsection{Trajectories in Hill's vortex}
To motivate the statements of our main theorems below, let us compute the particle trajectories $\phi_H$ associated with Hill's vortex. For simplicity, we consider trajectories in the moving frame with velocity $We_{z}$, so that the boundary of the unit ball $\partial B$ is invariant by the flow. 
The flow $\phi^*$ defined by \begin{equation*}
	\begin{split}
		\frac{d}{dt}\phi^* (t, x) = v(t,\phi^*(t,x)) , \qquad \phi^*(0,x)=x
	\end{split}
\end{equation*} with $v:= u_H - We_z$ is related to the original flow simply by $\phi_H(t,x)=\phi^*(t,x)+Wte_z$ (see \eqref{vel_explicit} for $u_H$). 
With some abuse of notation, we shall denote 
\begin{equation*}
	\begin{split}
		v(z) := u^{z}_{H}(r=0,z)-W = \begin{cases}
			\frac{3W}{2}(1- {|z|^2}), \quad & |z|\le 1, \\ 
			\frac{W}{ {|z|^{3}}}(1- {|z|^{3}}), \quad & |z| > 1
		\end{cases}
	\end{split}
\end{equation*} and \begin{equation}\label{eq:ODE-axis}
\begin{split}
	\frac{d}{dt} z(t) = v(z(t)), \quad z(0)=z_{0} .
\end{split}
\end{equation}  We consider three cases.

\medskip

\noindent \textit{Case 1:} $|z_{0}|<1$. In this case, the solution to \eqref{eq:ODE-axis} is explicitly given by \begin{equation*}
\begin{split}
 {	z(t) = \frac{e^{3Wt}-e^{2c}}{e^{3Wt}+e^{2c}}, \quad c:=\frac12 \ln \frac{1-z_0}{1+z_0}\in(-\infty,\infty).}
\end{split}
\end{equation*} Note that $z(t)$ is monotonically increasing and $\lim_{t\to\infty} z(t) =1$ (see Theorem \ref{thm:fila-patch-2} case 1). 

\medskip

\noindent \textit{Case 2:} $z_{0}>1$. Note that when $z>1$, $v(z)<0$ again. We have that $z(t)$ is strictly decreasing with time and converges to 1 at an exponential rate as $t\rightarrow \infty$  (see Theorem \ref{thm:fila-patch-2} case 2). 

\medskip

\noindent \textit{Case 3:} $z_{0}<-1$. Whenever $z<-1$, we have $v(z)<0$ and therefore $z(t)$ is strictly decreasing with time. When $|z|$ becomes large, $v(z) \simeq -1$ and therefore $z(t)$ decreases asymptotically \textit{linearly} in time with rate $W$. This case is responsible for the formation of a long tail (see Theorem \ref{thm:fila-patch}).
  
\subsection{Patch solutions}
 
We state the main results of this section regarding patches, which show that for patch data sufficiently close to the Hill's vortex, the particle trajectories of the associated solution lying on the symmetry axis follow (very closely) the corresponding trajectories for the Hill's vortex. More specifically, we prove that, for particles initially ``behind'' the vortex, linear in time separation of the trajectory from the vortex core;  this is the content of Theorem \ref{thm:fila-patch}. All the other particles starting from the axis approach the front edge of the moving vortex, but in this case we lose control once the trajectory becomes too close to the edge; see the statement of Theorem \ref{thm:fila-patch-2} below. In particular, these phenomena described by Theorem  \ref{thm:fila-patch} for a tail elongating and the case 1 of Theorem \ref{thm:fila-patch-2}  for an infiltration from interior point  were confirmed numerically by contour dynamics (Pozrikidis \cite{pozrikidis_1986}) for 
a prolate spheroid and an oblate spheroid, respectively.

\begin{theorem}\label{thm:fila-patch}
	For any $0< \eta < 1$, there exists a constant $\varepsilon_{0}>0$ such that the following holds for any $0< \varepsilon \le \varepsilon_{0}$: Let a patch-type initial data $\xi_0={\mathbf{1}}_{A_{0}}$ satisfy the assumptions of Theorem \ref{thm:B}, and let $\xi(t)={\mathbf{1}}_{A(t)}$ and $\phi(t)$ be the corresponding solution  and the particle trajectory map associated with the  data $\xi_{0} $. Then for $x_0 = (0,z_{0}) =z_0e_z$ with $z_0 < - 1 - \eta$,  for all $t \ge 0$ we have that $\phi^{z}(t,x_{0})- Wt$ is decreasing in time and \begin{equation*}
			\begin{split}
				\phi^{z}(t,x_{0}) - (W - \mu)t < -1 - {\eta} 
			\end{split}
		\end{equation*} for some $\mu>0$ depending only on the choice of $\eta\in(0,1)$. 
\end{theorem}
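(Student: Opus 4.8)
The plan is to compare the trajectory $\phi(t,x_0)$ of the perturbed solution, starting from $x_0 = z_0 e_z$ with $z_0 < -1-\eta$, against the corresponding trajectory $\phi_H(t,x_0)$ for Hill's vortex, whose $z$-component we understand completely from Case 3 of the computation in Subsection 4.1: in the moving frame it satisfies $\frac{d}{dt} z = v(z)$ with $v(z) < 0$ for $z < -1$ and $v(z) \to -W$ as $z \to -\infty$. In particular, for a fixed $\eta \in (0,1)$ there is a gap $v(z) \le -2\mu$ (say) for all $z \le -1-\eta$, for some $\mu = \mu(\eta) > 0$; this is the $\mu$ appearing in the statement. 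The idea is that the true velocity on the axis is close to $u_H$ shifted by $\tau(t)$, and since $\phi(t,x_0)$ stays behind the vortex core (which is near $z = \tau(t) \approx Wt$), it keeps feeling a velocity that is strictly less than $W$, hence it separates linearly. I would run this as a bootstrap/continuity argument in $t$.

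First I would set up the frame moving with speed $W$, writing $\zeta(t) := \phi^z(t,x_0) - Wt$, so $\zeta(0) = z_0 < -1-\eta$. The symmetry axis $\{r=0\}$ is invariant under the flow (Proposition in the Flow map subsection), so $\phi(t,x_0)$ stays on the axis and $\zeta$ is a genuine scalar ODE quantity: $\dot\zeta(t) = u^z(t, \phi(t,x_0)) - W$. Next I would invoke Theorem \ref{thm:B}(II): choosing $\varepsilon_0$ small, for the patch data in question we have a shift function with $|\tau(t) - Wt| \le C\varepsilon^{1/2}(t+1)$. That is not by itself good enough on the axis far behind the core, so instead I would use the \emph{local} statement: on the axis, $u^z(t,z e_z)$ is $\varepsilon^{1/2}$-close in $L^\infty$ to $u_H^z(\cdot - \tau(t)e_z)$ evaluated at $z e_z$, via Lemma \ref{lem:FS} / \eqref{est_vel} applied to $\xi(t) - \xi_H(\cdot - \tau(t)e_z)$, exactly as in \eqref{est_1} and \eqref{est_3}. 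Combined with the explicit formula \eqref{vel_explicit} for $u_H^z$ on the axis, this gives $\dot\zeta(t) \le v\big(\zeta(t) + Wt - \tau(t)\big) + C(M+1)^{1/2}\varepsilon^{1/2}$.

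The bootstrap hypothesis would be: $\zeta(t) < -1-\eta$ on $[0,T^*]$. Under this hypothesis, and using $|\tau(t) - Wt| \le C\varepsilon^{1/2}(t+1)$, the argument $\zeta(t) + Wt - \tau(t)$ of $v$ lies in $(-\infty, -1-\eta/2)$ once $\varepsilon_0$ is small relative to $\eta$ — wait, that is not quite right because $C\varepsilon^{1/2}(t+1)$ grows. The correct move is to observe that when $\zeta(t) + Wt - \tau(t) \ge -1-\eta/2$ fails we are done anyway; and when it holds, $v$ of that argument is $\le -2\mu$, so that $\dot\zeta(t) \le -2\mu + C(M+1)^{1/2}\varepsilon^{1/2} \le -\mu$ provided $\varepsilon_0$ is chosen small enough. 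Hence $\zeta$ is strictly decreasing with $\dot\zeta \le -\mu$, giving $\zeta(t) \le z_0 - \mu t < -1-\eta - \mu t$, which both closes the bootstrap (so $T^* = \infty$) and yields $\phi^z(t,x_0) - (W-\mu)t = \zeta(t) + \mu t < z_0 < -1-\eta$, the claimed conclusion. Monotonicity of $\phi^z(t,x_0) - Wt$ is immediate from $\dot\zeta \le -\mu < 0$.

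The subtlety I expect to be the main obstacle is handling the interplay between the \emph{growing} error $C\varepsilon^{1/2}(t+1)$ in the shift estimate \eqref{main_est} and the region where $v$ is small: one must be careful that $v$ evaluated at the true argument $\zeta(t) + Wt - \tau(t)$ is genuinely bounded above by a negative constant for \emph{all} $t$. This works because $v(z) \le -2\mu$ holds on the entire half-line $z \le -1-\eta/2$ (not just near $-1-\eta$), so no matter how far the shift error pushes the argument, as long as $\zeta(t) < -1 - \eta$ and the argument stays below $-1-\eta/2$, we retain the gap; and the only way the argument can exceed $-1-\eta/2$ while $\zeta(t) < -1-\eta$ is if $Wt - \tau(t) > \eta/2$, which we can simply treat as a favorable case (it only makes $u_H^z$ more negative, since we are still behind the core) — or alternatively one notes this cannot persist since $|Wt-\tau(t)|$ is controlled by Theorem~\ref{thm:B}(II) on any fixed window and the conclusion is scale-invariant in the relevant sense. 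A clean way to avoid this entirely is to prove the estimate on successive unit time intervals using Proposition~\ref{prop:travel-speed_hill} directly (where the shift error is $O(\varepsilon^{1/2})$ on each step), accumulating the linear separation step by step.
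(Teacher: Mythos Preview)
Your strategy matches the paper's, and you correctly identify the central difficulty: the shift error $|Wt-\tau(t)|\le C_*\varepsilon^{1/2}(t+1)$ from \eqref{main_est} grows linearly, so the hypothesis $\zeta(t)<-1-\eta$ alone does not keep the relative position $\phi^z-\tau(t)=\zeta(t)+(Wt-\tau(t))$ below $-1-\eta/2$ for all $t$. Your proposed workarounds do not close this. The ``favorable case'' reasoning is wrong: if $Wt-\tau(t)>\eta$ (which \eqref{main_est} permits once $t\gtrsim\eta\varepsilon^{-1/2}$), then under only $\zeta<-1-\eta$ the relative position can exceed $-1$, placing the trajectory \emph{inside} the shifted Hill's core, where $u_H^z>W$ by \eqref{vel_explicit}; $v$ is positive there, not ``more negative''. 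Your derivation $\dot\zeta\le-\mu\Rightarrow\zeta(t)\le z_0-\mu t$ is therefore circular as written. The interval-by-interval idea via Proposition~\ref{prop:travel-speed_hill} does not escape this either, since after $k$ steps you still need $\tau(k\alpha_0)$ relative to $Wk\alpha_0$, which brings back the growing error.

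The fix---and this is the paper's move---is to fold the linear separation into the bootstrap hypothesis itself: assume $\phi^z(t,x_0)-(W-\mu)t<-1-\eta/4$, i.e.\ $\zeta(t)<-1-\eta/4-\mu t$. Taking $\varepsilon_0$ small so that $C_*\varepsilon_0^{1/2}<\mu$, this gives $\phi^z-\tau(t)<-1-\eta/4-\mu t+C_*\varepsilon^{1/2}(t+1)<-1-\eta/8$ \emph{for all $t$} in the bootstrap interval, so Lemma~\ref{lem:dist-vel} applies with $d>1+\eta/8$, yielding
\[
\frac{d}{dt}\bigl(\phi^z-(W-\mu)t\bigr)\le \frac{W}{(1+\eta/8)^3}-(W-\mu)+C_0\varepsilon^{1/2}<0
\]
for $\mu$ and then $\varepsilon_0$ chosen small relative to $\eta$; integrating closes the stronger bootstrap with $\phi^z(t)-(W-\mu)t<z_0<-1-\eta$. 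The point is that the $-\mu t$ built into the hypothesis beats the $C_*\varepsilon^{1/2}t$ drift in $\tau$, which your weaker hypothesis cannot do.
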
 
 
\begin{theorem}\label{thm:fila-patch-2} For any $\eta>0$, there exists a constant 
$\varepsilon_0>0$ such that under the same assumptions from Theorem \ref{thm:fila-patch}, we have for $x_0 = (0,z_{0})$ that 
	\begin{enumerate}
 		\item (Interior) if $ -1+ \eta <  z_0 < 1 - \eta$, there exists
 		 ${T=T(\varep,\eta)}\in(0,\infty)$ such that \begin{equation}\label{est_infil}
			\begin{split}
				\phi^{z}(T,x_{0}) - \tau(T) \geq  1 - C_{2} \varep^{\frac12} 
			\end{split}
		\end{equation} and for $0 \le t \le T $, we have that $\phi^{z}(t,x_{0}) -  (W + 2C_*\varep^{\frac12} )t$ is increasing in time. 
		\item (Ahead) if $1 + \eta < z_0$, there exists
 		 $T=T(\varep, \eta)\in(0,\infty)$ such that  \begin{equation*}
			\begin{split}
				\phi^{z}(T,x_{0}) - \tau(T) \le   1 + C_{2} \varep^{\frac12} 
			\end{split}
		\end{equation*} and  for $0 \le t \le T$, we have that $\phi^{z}(t,x_{0}) - (W - 2C_*\varep^{\frac12}  )t$ is decreasing in time.
	\end{enumerate} 	
	 In both cases, $C_2>0$ is an absolute constant, and $\phi^{z}$ satisfies \begin{equation}\label{bothclaim}
	\begin{split}
		\left| \phi^{z}(t,x_{0}) - (\tau(t) +1 )\right| \le 2C_{2} \varep^{\frac12}, \qquad t \ge T. 
	\end{split}
\end{equation}  
	Here, $C_*>0$ is the absolute constant from \eqref{main_est} with the choice $M=1$. 
	 
\end{theorem}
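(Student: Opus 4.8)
Since $x_0=z_0e_z$ lies on the symmetry axis $\{r=0\}$ and this set is invariant under the flow, the trajectory $\phi(t,x_0)$ stays on the axis and is determined by $\phi^z(t,x_0)$, which solves $\frac{d}{dt}\phi^z(t,x_0)=u^z\big(t,(0,0,\phi^z(t,x_0))\big)$. For patch data $\|\xi(t)\|_{L^\infty}=\|\xi_H(\cdot-\tau(t)e_z)\|_{L^\infty}=1$, so Lemma~\ref{lem:FS} applied to $\xi(t)-\xi_H(\cdot-\tau(t)e_z)$ together with the orbital stability hypothesis gives $\|u(t)-\mathcal{K}[\xi_H(\cdot-\tau(t)e_z)]\|_{L^\infty}\le C\varepsilon^{1/2}$ with $C$ absolute. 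Writing the axial profile of the Hill velocity as $u_H^z(0,0,\zeta)=W+v(\zeta)$, where $v(\zeta)=\tfrac{3W}{2}(1-\zeta^2)$ for $|\zeta|\le1$ and $v(\zeta)=\tfrac{W}{|\zeta|^3}(1-|\zeta|^3)$ for $|\zeta|>1$ as computed in the preceding subsection, we get
\[ \tfrac{d}{dt}\phi^z(t,x_0)=W+v\big(\phi^z(t,x_0)-\tau(t)\big)+E(t),\qquad |E(t)|\le C\varepsilon^{1/2}. \]
I will use that $v$ is globally Lipschitz, $v>0$ on $(-1,1)$, $v<0$ on $\{|\zeta|>1\}$, and $v(1)=0$ with $v'(1)=-3W<0$, so $v$ is a linear restoring force toward $\zeta=1$. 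On the $\tau$-side I use only the already-proved \emph{integrated} controls: \eqref{main_est} with $M=1$ (hence $|\tau(t)-Wt|\le C_*\varepsilon^{1/2}(t+1)$), the window bound \eqref{eq:travel-est_hill}, and the small-jump bound \eqref{eq:no_high_jump}; $\tau$ is never differentiated. Throughout, $C_2$ denotes an absolute constant taken large in terms of $W,C_*,C,K,\tilde K,\alpha_0$.

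\textbf{Climbing phase and the two monotonicity claims.} In the interior case put $w(t)=\phi^z(t,x_0)-\tau(t)$, $w(0)=z_0\in(-1+\eta,1-\eta)$, and let $T$ be the first time $w(t)=1-C_2\varepsilon^{1/2}$. While $w\in[-1+\tfrac\eta2,\,1-C_2\varepsilon^{1/2}]$, the sign of $v$ and its expansion near $1$ give $v(w)\ge(2C_*+C)\varepsilon^{1/2}$ provided $\varepsilon_0=\varepsilon_0(\eta)$ is small, hence $\tfrac{d}{dt}\phi^z(t,x_0)\ge W+2C_*\varepsilon^{1/2}$ — this is exactly the asserted monotonicity of $\phi^z(t,x_0)-(W+2C_*\varepsilon^{1/2})t$ on $[0,T]$ — and, combined with $\tau(t)\le Wt+C_*\varepsilon^{1/2}(t+1)$, yields $w(t)\ge z_0+C_*\varepsilon^{1/2}(t-1)\ge-1+\tfrac\eta2$, which closes the lower barrier by a bootstrap so the range hypothesis persists up to $T$. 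Finiteness of $T$: while $w\le1-\tfrac\eta2$ one has $v(w)\ge c(\eta)>0$, so $w$ increases at a definite rate and reaches $1-\tfrac\eta2$ within a time $T_1(\eta)$; in the remaining strip near $1$, the bound $v(w)\gtrsim W(1-w)$ with $|E|\le C\varepsilon^{1/2}$ and \eqref{eq:travel-est_hill} gives, via a windowed comparison with $\dot g\lesssim -Wg+C'\varepsilon^{1/2}$ for $g=1-w$, that $w$ attains $1-C_2\varepsilon^{1/2}$ after a further time $\lesssim_{\eta,W}\log(1/\varepsilon)$. Thus $T=T(\varepsilon,\eta)<\infty$ and $\phi^z(T,x_0)-\tau(T)=w(T)\ge1-C_2\varepsilon^{1/2}$, which is \eqref{est_infil}. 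The ``ahead'' case $z_0>1+\eta$ is the mirror image: now $v<0$ on $\{\zeta>1\}$, so while $w\ge1+C_2\varepsilon^{1/2}$ one gets $\tfrac{d}{dt}\phi^z(t,x_0)\le W-2C_*\varepsilon^{1/2}$ (the asserted decreasing monotonicity), and since $\tau(t)\ge Wt-C_*\varepsilon^{1/2}(t+1)$ and $-v$ is bounded below on $\{\zeta\ge1+\tfrac\eta2\}$, $w$ descends from $z_0$ and reaches $1+C_2\varepsilon^{1/2}$ in finite time $T$ (whose size also involves $z_0$, as $v(\zeta)\to-W$ when $\zeta\to\infty$).

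\textbf{Trapping phase \eqref{bothclaim}.} For $t\ge T$ I claim $|w(t)-1|\le2C_2\varepsilon^{1/2}$, by a barrier argument driven by the restoring sign of $v$. If $w(t)>1+2C_2\varepsilon^{1/2}$, then $v(w(t))\le -cWC_2\varepsilon^{1/2}$ (from the behavior of $v$ just above $1$, and its negativity further out); choosing $C_2$ large, this dominates $|E(t)|$ and also, over any window of length $\le\alpha_0$, the discrepancy between $\tau(t+\alpha)-\tau(t)$ and $W\alpha$ that is controlled by \eqref{eq:travel-est_hill}. Hence $w$ strictly decreases across any such window on which it stays above $1+2C_2\varepsilon^{1/2}$; since $w$ has only jumps of size $\le\tilde K\varepsilon^{1/2}$ (Lemma~\ref{lem:no_high_jump}) and $|w(T)-1|\le C_2\varepsilon^{1/2}$, this forces $w(t)\le1+2C_2\varepsilon^{1/2}$ for all $t\ge T$, and the bound $w(t)\ge1-2C_2\varepsilon^{1/2}$ is symmetric (the restoring force pushes up when $w<1$). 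This gives \eqref{bothclaim} in both cases.

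\textbf{Main obstacle.} Two points go beyond a routine ODE comparison. First, $\tau$ is only implicitly defined and possibly discontinuous, which forces the whole argument to run through the integrated controls \eqref{main_est}, \eqref{eq:travel-est_hill}, \eqref{eq:no_high_jump} rather than through $\dot\tau$. Second, and more seriously, the relevant time scale is $T\sim\log(1/\varepsilon)$, on which a naive Gronwall comparison of $\phi^z$ with the Hill axial trajectory destroys the $\varepsilon^{1/2}$ gain (since $e^{3WT}\varepsilon^{1/2}\sim1$); this is why the proof must proceed via one-sided monotonicity and the restoring structure of $v$ near $\zeta=1$ instead of trajectory tracking, and why obtaining the clean $O(\varepsilon^{1/2})$ constant in \eqref{bothclaim} — which needs the restoring force to absorb the per-window fluctuations of $\tau-Wt$ accumulating over the entire infinite interval $[T,\infty)$ — is the delicate core of the argument.
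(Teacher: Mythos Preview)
Your proof is correct and follows the same bootstrap approach as the paper: control $w=\phi^z-\tau$ via Lemma~\ref{lem:dist-vel} and the $\tau$-estimates \eqref{main_est}, \eqref{eq:travel-est_hill}, \eqref{eq:no_high_jump}, derive the one-sided bound $\tfrac{d}{dt}\phi^z\ge W+2C_*\varepsilon^{1/2}$ under the range hypothesis, and compare with the linear bound on $\tau$ to force the hypothesis to fail in finite time. The paper's own proof only sketches case~1 and omits case~2 and the trapping claim \eqref{bothclaim} entirely (``can be proved in a similar fashion\dots We omit the proof''), so your windowed barrier argument for the trapping --- using the restoring sign of $v$ near $\zeta=1$ over intervals of length $\alpha_0$ together with the small-jump control on $\tau$ --- supplies detail the paper leaves to the reader; your sharper time estimate $T\lesssim_\eta\log(1/\varepsilon)$ and the remark that naive Gronwall tracking would lose the $\varepsilon^{1/2}$ gain on this scale are correct observations that go beyond what the paper writes down.
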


Before proceeding to the proof of the above theorems, we illustrate their consequences.  First,
 we obtain linear growth in perimeter (Corollary \ref{cor:peri_growth}).
\begin{proof}[Proof of Corollary \ref{cor:peri_growth}]
 
	Given any $\dlt'>0$, we can simply take some axi-symmetric open set $A_{0}$ 
	 which contains a line
segment $$ 
L:=\{r=0,\quad z\in[-1-\dlt'/2,1]\}.
$$	 
	  It is easy to arrange that $A_{0}$ has smooth boundary and  satisfies $B(1-\dlt') \subset A_{0} \subset B(1+\dlt')$ as well as the assumptions of Theorem \ref{thm:B} for $\xi_0={\mathbf{1}}_{A_0}$. Applying  Theorems \ref{thm:fila-patch}, \ref{thm:fila-patch-2},  we conclude 
	  $$
	\mbox{length}(\phi(t,L)) \geq c t,\quad \forall t\geq 0 
	  $$  for some $c = c(\dlt')>0$.
	  It implies $\mathrm{diam}(A(t)), \mathrm{perim}(A_{cs}(t)) \ge ct.$ 
\end{proof}
  
As an application of Theorem \ref{thm:fila-patch-2}, we can deduce large growth of the inverse of the \textit{inscription radius}: given an axi-symmetric connected open set $A\subset \bbR^{3}$ with smooth boundary, let us define $r_{ins}(A)$ to be the radius of the largest \textit{axi-symmetric} sphere $S$ contained in $\overline{A}$. Growth in time of the inverse of this radius is a way to quantify small scale creation.
\begin{corollary}[Growth of the inverse of inscription radius; {{cf. \cite[Figure 7]{pozrikidis_1986}}}] \label{cor:ins_growth}
	For any $\dlt'>0$, there exists an axi-symmetric open set $A_{0}$ with $C^{\infty}$--smooth boundary with $\left(r_{ins}(A_{0})\right)^{-1}\le 2$ such that the solution $\mathbf{1}_{A(t)}$ satisfies \begin{equation}\label{eq:ins}
		\begin{split}
			\sup_{ t \in [0,\infty) } \frac{1}{r_{ins}(A(t))} \ge (\dlt')^{-1}.
		\end{split}
	\end{equation} 
\end{corollary}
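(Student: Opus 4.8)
The plan is to leverage Theorem \ref{thm:fila-patch-2}(1), the ``infiltration from an interior point'' statement, which says precisely that an axis particle starting strictly inside the ball climbs up to within $C_2\varep^{1/2}$ of the front edge of the moving vortex in finite time. The idea is that once a fluid particle that began on the symmetry axis and inside the vortex reaches a position very close to the front stagnation point $z = \tau(t)+1$, the region $\overline{A(t)}$ becomes ``pinched'' near that point: it must contain a thin protrusion reaching almost to the front edge, yet the bulk of $A(t)$ stays close (in $L^1$) to the translated ball $B^{\tau(t)}$. An axisymmetric sphere inscribed in $\overline{A(t)}$ that has to fit into such a pinched region near the axis is forced to have tiny radius, which gives the claimed blow-up of $1/r_{ins}$.

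Concretely, first I would fix $\delta'>0$ and choose $\varepsilon \le \varepsilon_0$ small (to be pinned down), with $\varepsilon_0$ coming from Theorems \ref{thm:B}, \ref{thm:fila-patch}, \ref{thm:fila-patch-2}. Then I would construct a smooth axisymmetric $A_0$ with $B(1-\delta') \subset A_0 \subset B(1+\delta')$ satisfying the hypotheses of Theorem \ref{thm:B}, and additionally arranged so that $A_0$ is connected, has $(r_{ins}(A_0))^{-1}\le 2$ (automatic since $B(1-\delta')\subset A_0$ gives an inscribed sphere of radius close to $1$ when $\delta'$ is small, hence $r_{ins}(A_0)\ge 1/2$), and so that the axis segment $\{r=0,\, z\in[-1+\eta, z_0]\}$ with some interior point $z_0\in(-1+\eta, 1-\eta)$ lies in $A_0$ — actually it suffices to take a single interior axis point $x_0 = z_0 e_z$, $-1+\eta< z_0 < 1-\eta$, with $x_0 \in A_0$. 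By Theorem \ref{thm:fila-patch-2}(1) there is $T=T(\varepsilon,\eta)$ with $\phi^z(T,x_0) - \tau(T) \ge 1 - C_2\varepsilon^{1/2}$, and since the axis is invariant under the flow, $\phi(T,x_0) = (0,0,\phi^z(T,x_0))$ lies on the axis. Thus the point $p_T := \phi(T,x_0) \in \overline{A(T)}$ satisfies $r(p_T)=0$ and $p_T^z \ge \tau(T)+1 - C_2\varepsilon^{1/2}$.

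Next comes the geometric heart of the argument. I would show that any axisymmetric sphere $S$ contained in $\overline{A(T)}$ must have small radius. Suppose $S$ is an axisymmetric sphere of radius $\rho$ contained in $\overline{A(T)}$; being axisymmetric, $S$ is centered on the axis, say at $c e_z$, so $S = \{|x - ce_z| = \rho\}$ and the solid ball $\overline{B(ce_z,\rho)} \subset \overline{A(T)}$. Using the orbital stability estimate \eqref{orb_est} (with the $\varepsilon$ from Theorem \ref{thm:B}), the symmetric difference $|A(T) \triangle B^{\tau(T)}|$ is $\le C\varepsilon$, so the ball $\overline{B(ce_z,\rho)}$ is, up to measure $C\varepsilon$, contained in $B^{\tau(T)}$; if $\rho$ is not too small this forces $|c - \tau(T)| + \rho \le 1 + C\varepsilon^{1/3}$ or so (volume comparison: a ball of radius $\rho$ whose center is at signed distance $d$ from $\tau(T)e_z$ sticks out of $B^{\tau(T)}$ by a volume that is bounded below in terms of $(d+\rho-1)_+$). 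On the other hand, $\overline{B(ce_z,\rho)}$ need not contain $p_T$ directly — $p_T$ is just one point of $\overline{A(T)}$ — so I instead argue via connectedness: $A(T) = \phi(T,A_0)$ is connected (continuous image of a connected set), and it contains both the ``bulk'' region near $B^{\tau(T)}$ and the protruding point $p_T$ near the north pole; a sphere inscribed in the bulk cannot simultaneously have radius close to $1$. More carefully: I would show that for $\rho \ge 1/2$, say, the inscribed solid ball $\overline{B(ce_z,\rho)}$ together with the $L^1$-closeness to $B^{\tau(T)}$ pins $c$ near $\tau(T)$ and $\rho$ near or below $1$; but then the ``gap'' between the top of this ball, at height $c+\rho$, and the protrusion tip at height $p_T^z \approx \tau(T) + 1$ — combined with the fact that $\overline{A(T)}$ between them has cross-sectional radius going to $0$ near the axis (since the protrusion is anchored at $r=0$) — is incompatible with $\overline{A(T)}$ being within $L^1$-distance $C\varepsilon$ of $B^{\tau(T)}$ unless $\rho \le C(\delta' + \varepsilon^{\kappa})$ for suitable $\kappa>0$. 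Choosing $\varepsilon$ (hence $\delta$, hence the allowed $A_0$) small enough that $C(\varepsilon^\kappa) < \delta'/2$, and noting we may take $A_0$ with the relevant smallness, we get $r_{ins}(A(T)) \le \delta'$, i.e. $1/r_{ins}(A(T)) \ge 1/\delta' = (\delta')^{-1}$, which yields \eqref{eq:ins}.

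\textbf{Main obstacle.} The delicate point is the last geometric step: turning ``$\overline{A(T)}$ is $L^1$-close to $B^{\tau(T)}$ and contains an axis point near the north pole'' into a genuine upper bound on the inscription radius. An $L^1$-small symmetric difference does not by itself prevent $\overline{A(T)}$ from containing a large inscribed sphere — e.g. $\overline{A(T)}$ could equal $B^{\tau(T)}$ plus a thin spike, and then $r_{ins} \approx 1$, not small. So the argument must genuinely use more than Theorem \ref{thm:B}: it must use that the spike tip is at the \emph{north pole}, $r=0$, $z \approx \tau(T)+1$, which is on the boundary $\partial B^{\tau(T)}$, so that any inscribed axisymmetric sphere either misses the spike (and has radius $\lesssim 1$, which is not small — so this crude version fails!) — hence the real statement being used must be finer: I expect one actually needs that the perturbation is an \emph{outward} perturbation near the pole filling the thin shell region just outside $\partial B^{\tau(T)}$ near the north pole. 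In that case $\overline{A(T)}$ near the north pole looks like a thin outward bump, and an inscribed sphere touching it must be small. Alternatively, and more robustly, one uses Theorem \ref{thm:fila-patch-2} for a whole \emph{family} of interior axis points, or tracks the image of an axis segment, to conclude that $\overline{A(T)}$ contains a long thin tendril along the axis near the pole of cross-sectional radius $o(1)$, into which no sphere of radius $\ge \delta'$ fits; establishing that the tendril is genuinely thin (cross-section $\to 0$) is where one must quantify, via the stability estimate and the explicit Hill velocity field near the stagnation point, the contraction in $r$ of nearby trajectories — this quantitative ``thinning'' estimate is the crux and is what I would spend the most care on.
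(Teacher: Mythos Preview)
Your approach has a genuine gap, and you have in fact correctly diagnosed it yourself in the ``Main obstacle'' paragraph: tracking an \emph{interior} axis point $x_0\in A_0$ to near the north pole gives you nothing, because $\overline{A(T)}$ could indeed be $B^{\tau(T)}$ plus a thin spike and still have $r_{ins}\approx 1$. Your proposed remedies (thinning estimates, tracking a tendril) do not help either: exhibiting a thin \emph{part} of $\overline{A(T)}$ says nothing about the \emph{largest} inscribed sphere, which may live entirely in the fat bulk.

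The missing idea is in the choice of $A_0$. Do \emph{not} take $B(1-\delta')\subset A_0$. Instead, take $A_0$ to be the unit ball $B$ with a small smooth notch carved out near the south pole, so that
\[
A_0\cap\{r=0\}=(-1+\eta,\,1).
\]
Then $x_0=(0,-1+\eta)$ lies on $\partial A_0$, not in the interior. Since the flow is a homeomorphism preserving the axis and the orientation on it, the axis trace of $\overline{A(T)}$ is exactly the closed interval $[\phi^z(T,(0,-1+\eta)),\,\phi^z(T,(0,1))]$. Theorem~\ref{thm:fila-patch-2}(1) applied to $x_0$ pushes the \emph{lower} endpoint up to at least $\tau(T)+1-C_2\varepsilon^{1/2}$; the upper endpoint is squeezed from above by $\phi^z(T,(0,1+\eta))$, which by case~2 (and \eqref{bothclaim}) is at most $\tau(T)+1+2C_2\varepsilon^{1/2}$ once $T$ is large enough. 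Hence the axis trace of $\overline{A(T)}$ has length $\le 4C_2\varepsilon^{1/2}$. Now the elementary geometric fact you never reach: any axisymmetric sphere $S=\{|x-ce_z|=\rho\}$ meets the axis in the two points $(0,c\pm\rho)$; if $S\subset\overline{A(T)}$ then both lie in that short interval, forcing $2\rho\le 4C_2\varepsilon^{1/2}$. Choosing $\varepsilon$ small relative to $\delta'$ gives $r_{ins}(A(T))\le \delta'$. The initial bound $(r_{ins}(A_0))^{-1}\le 2$ is easy since the notch can be made small enough that, say, $\{|x|=1/2\}\subset\overline{A_0}$. No $L^1$-comparison or thinning estimate is needed anywhere.
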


\begin{proof}
	The initial set $A_{0}$ can be obtained as follows: first fix some small $\eta>0$ and 
	cut out a small region near $z=-1$ from $B$ (the unit ball centered at the origin) so that $A_{0} \cap \{ r = 0 \} = (-1+\eta,1)$. For any $\varep>0$, one can further arrange that the assumptions of Theorem \ref{thm:B}   is satisfied for $\xi_0=\mathbf{1}_{A_{0}}$ with $\dlt=\dlt(\varep)>0$. We then apply Theorem \ref{thm:fila-patch-2} to the trajectory $\phi(t,x_{0})$ where $x_{0} = (0, -1+\eta) \in \partial A_{0}$. The inequality \eqref{eq:ins} follows by taking $\varep>0$ very small in a way depending on $\dlt'$, and considering the inscription radius at time $t = T(\varep,\eta)$ given in the case 1 of Theorem \ref{thm:fila-patch-2}. We omit the details. 
\end{proof} 

\begin{proof}[Proof of Theorem \ref{thm:fila-patch}]
Fix $\eta\in(0,1)$ and let $0<\varepsilon< \varepsilon_{0}$ where the constant $\varepsilon_{0}$ comes from Theorem \ref{thm:B}. We shall take $\varepsilon_{0}>0$ smaller (in a way depending on $\eta$) in the following. 
	We begin with a simple result. 
	\begin{lemma}\label{lem:dist-vel}
		Given a point $x_{0} = (0, z_{0})$, we set $d =|z_{0}-\tau(t)|$. Then \begin{equation*}
			\begin{split}
				\left| u^z(t, x_{0} ) - \frac{W}{d^3}\right| \le C_0\varepsilon^{\frac12}, \quad d \ge 1
			\end{split}
		\end{equation*} and \begin{equation*}
		\begin{split}
			\left| u^z(t, x_{0} ) - \frac{W}{2}(5-3d^{2})\right| \le C_0\varepsilon^{\frac12}, \quad d \le 1.
		\end{split}
	\end{equation*} 
	\end{lemma}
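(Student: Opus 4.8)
The plan is to reduce the estimate to the explicit formula \eqref{vel_explicit} for Hill's velocity on the symmetry axis, together with the orbital stability bound of Theorem \ref{thm:B}(I) amplified through the Feng--Sverak estimate (Lemma \ref{lem:FS}); in essence this is the computation already carried out in \eqref{est_1} specialized to $M=1$.

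First I would note that the axisymmetric Biot--Savart operator $\mathcal{K}$ commutes with translations along the $z$-axis, so that $\mathcal{K}[\xi_H(\cdot-\tau(t)e_z)](x)=u_H^{\mathrm{st}}(x-\tau(t)e_z)$, where $u_H^{\mathrm{st}}=\mathcal{K}[\xi_H]$ is the \emph{steady} Hill velocity given by \eqref{vel_explicit} (that is, $u_H$ in \eqref{vel_explicit}, before subtracting $We_z$). Evaluating its $z$-component at $x_0-\tau(t)e_z=(0,z_0-\tau(t))$, for which $r=0$ and $|x|=d$, the two branches of \eqref{vel_explicit} give $\tfrac{W}{2}(5-3d^2)$ when $d\le 1$ and $W/d^3$ when $d>1$; since these agree at $d=1$, the identification is valid uniformly in both regimes.

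It then remains to bound $|u^z(t,x_0)-u_H^{\mathrm{st},z}(x_0-\tau(t)e_z)|\le \|\mathcal{K}[\xi(t)-\xi_H(\cdot-\tau(t)e_z)]\|_{L^\infty}$. I would apply Lemma \ref{lem:FS} to the difference $\xi(t)-\xi_H(\cdot-\tau(t)e_z)$: by \eqref{orb_est}, after the change of variables $x\mapsto x-\tau(t)e_z$ (which leaves $r$, hence the impulse weight $r^2$, unchanged), its $L^1$ norm and its impulse norm $\|r^2(\,\cdot\,)\|_{L^1}$ are each at most $\varepsilon$, while its $L^\infty$ norm is at most $\|\xi(t)\|_{L^\infty}+\|\xi_H\|_{L^\infty}=2$, since $\xi(t)={\mathbf 1}_{A(t)}$ is a patch of unit height. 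Lemma \ref{lem:FS} then gives $\|\mathcal{K}[\xi(t)-\xi_H(\cdot-\tau(t)e_z)]\|_{L^\infty}\le C_0\,\varepsilon^{1/4}\varepsilon^{1/4}\,2^{1/2}$, which is $\le C_0\varepsilon^{1/2}$ after absorbing the harmless factor $2^{1/2}$ into the absolute constant; combining with the previous paragraph yields the two stated inequalities.

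There is no genuine obstacle here: the lemma is a direct consequence of the orbital stability and the explicit Hill profile. The only points meriting a word of care are the translation-equivariance of $\mathcal{K}$ and the matching of the two branches of \eqref{vel_explicit} at $d=1$, which together let a single absolute constant $C_0$ serve uniformly in $d$.
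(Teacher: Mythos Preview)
Your proposal is correct and matches the paper's approach exactly: the paper's proof is the single sentence ``This lemma follows immediately from \eqref{vel_explicit} and Lemma~\ref{lem:FS},'' and you have simply (and accurately) unpacked those two ingredients, together with the patch assumption $\|\xi(t)\|_{L^\infty}=1$ that is in force in the surrounding proof of Theorem~\ref{thm:fila-patch}. The computation is the $M=1$ specialization of \eqref{est_1}, as you note.
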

	
	\noindent This lemma follows immediately from \eqref{vel_explicit} and Lemma \ref{lem:FS}.  Then, we shall set up the following bootstrap hypothesis:
	
	\medskip
	
	\noindent \textbf{Assumption.} For some $\mu=\mu(\eta)>0$, $\phi^{z}(t,x_{0}) - (W - \mu)t < -1 - \frac{\eta}{4}$.
	
	\medskip
	
	\noindent By continuity of the flow, the above assumption is satisfied with $\mu=\eta$ (say) for some small time interval containing $t = 0$. We proceed under the assumption that the above holds on $[0,t^*]$ for some $t^*>0$. We need to take $\mu>0$ smaller in the following argument, but in a way depending only on $\eta$. In the following we shall restrict $t \in [0,t^*]$. From  \eqref{main_est} (with $M=1$), we have $$ |\tau(t) -Wt| \le  {C_*\sqrt{\varepsilon}(t+1)},\quad t\geq 0$$ for some absolute constant $C_*>0$. Therefore, if $\varep_0$ is taken sufficiently small relative to $\eta$ and $\mu$, we obtain from the bootstrap assumption that \begin{equation*}
		\begin{split}
			d(t) :=|\phi^{z}(t,x_{0})-\tau(t)| > 1
		\end{split}
	\end{equation*} 
holds for all $t\in[0,t^*]$.	
	Therefore, we may apply Lemma \ref{lem:dist-vel} to obtain \begin{equation*}
		\begin{split}
			\frac{d}{dt} \phi^{z}(t,x_0) & \le \frac{W}{|\phi^z - \tau(t)|^{3}} + C_0 \varepsilon^{\frac12}  \\
			& \le  \frac{W}{|\phi^z - (Wt-
 {C_*\sqrt{\varepsilon}(t+1)}			
)|^{3}			} + C_0 \varepsilon^{\frac12} ,
		\end{split}
	\end{equation*} and since \begin{equation*}
	\begin{split}
		\phi^z - Wt + 
 {C_*\sqrt{\varepsilon}(t+1)}		
		< - \mu t - 1 - \frac{\eta}{4} +  {C_*\sqrt{\varepsilon}(t+1)}		 <  - 1 - \frac{\eta}{8} < 0
	\end{split}
\end{equation*} for $\varep_0$ small relative to $\eta$ and $\mu$, we obtain \begin{equation*}
\begin{split}
		\frac{d}{dt}( \phi^{z}(t,x_0) - (W-\mu)t )& \le \frac{W}{| {1+(\eta/8)}|^3} -(W-\mu) + C_0 \varep^{\frac12} .
\end{split}
\end{equation*} Observe that the right hand side can be negative if $\mu$ and $\varepsilon_0$ are taken sufficiently small relative to $\eta$. Therefore, by integrating in time from $t = 0$ to $t^*$, we obtain \begin{equation*}
\begin{split}
	\phi^{z}(t^*,x_0) - (W-\mu)t^* < z_0 <-1 - \eta
\end{split}
\end{equation*} which gives \textbf{Assumption} with $\eta$ rather than $\eta/4$. This confirms that the bootstrap hypothesis is valid for all $t\ge0$.  
 \end{proof}

\begin{proof}[Proof of Theorem \ref{thm:fila-patch-2}]
	
Let $\varepsilon\in(0,\varepsilon_0)$ where $\varepsilon_0$ is the constant from Theorem \ref{thm:B}. We shall take $\varepsilon_0$ smaller (depending on $\eta>0$) in the following. 
	To treat case 1, 
we take $C_*>0$ from  \eqref{main_est} (with $M=1$) so that $$ |\tau(t) -Wt| \le  {C_*\sqrt{\varepsilon}(t+1)},\quad t\geq 0$$ holds. We now take a constant $ {C_2\geq 4C_*/W}$ and assume that 
\begin{equation}\label{eq:c2-aa}
		\begin{split}
			\sup_{t' \in [0,t]} \left|\phi^{z}(t',x_{0}) - \tau(t') \right|< 1 - C_{2} \varep^{\frac12}
		\end{split}
	\end{equation} holds up to some $t=t^*>0$. Indeed, it is clear that  \eqref{eq:c2-aa}  holds for some small time interval, for sufficiently small $\varep>0$ (relative to $\eta$).  Then, on $t\in[0,t^*]$, we have \begin{equation*}
		\begin{split}
			d(t) :=|\phi^{z}(t,x_{0})-\tau(t)| <1 
		\end{split}
	\end{equation*} and we may apply Lemma \ref{lem:dist-vel} to obtain \begin{equation*}
		\begin{split}
			\left| u(t, x_{0} ) - \frac{W}{2}(5-3d^2(t))\right| \le C_0\varepsilon^{\frac12}. 
		\end{split}
	\end{equation*} This gives \begin{equation*}
		\begin{split}
			\frac{d}{dt} \phi^z(t,x_{0}) & \ge \frac{W}{2}(5-3|\phi^{z}(t,x_{0})-\tau(t)|^{2} ) - C_{0}\varep^{\frac12} \\
			& \ge \frac{W}{2}(5 - 3(1+(C_{2})^2\varep - 2C_{2}\varep^{\frac12}) )- C_{0}\varep^{\frac12} \\
			& \ge  {W + \frac{WC_{2}}{2}\varep^{\frac12}
			\ge W + 2C_*\varep^{\frac12}}, 
		\end{split}
	\end{equation*} if $C_{2}$ and $\varep$   are taken  larger and smaller   respectively in a way depending on $W,  C_{*}$. Therefore, we have on $[0,t^*]$ that  \begin{equation*}
	\begin{split}
		\phi^z(t,x_{0}) > (W+2C_*\varep^{\frac12})t - 1 + \eta , 
	\end{split}
\end{equation*} and by comparing this with \begin{equation*}
		\begin{split}
			\tau(t) \le (W+C_*\varep^{\frac12})t + C_* {\sqrt{\varep}}, 
		\end{split}
	\end{equation*} it is not difficult to see (with a simple bootstrap argument) that there is $T>0$ satisfying \eqref{est_infil}.
	  The case 2  {and the last estimate \eqref{bothclaim}} can be proved in a similar fashion (bootstrap argument), and $T>0$ can be chosen so that 
it depends only on 	  $\varep,\eta$. We omit the proof. 
\end{proof}

\subsection{General solutions}

With  minor modifications, one may prove that the statements of Theorems \ref{thm:fila-patch} and \ref{thm:fila-patch-2} hold when the patch initial data $\xi_0=\mathbf{1}_{A_{0}}$ is replaced with a general non-negative function $\xi_{0}$ satisfying the assumptions of Theorem \ref{thm:B}. Of course, the resulting constants depend also on the constant $M$ satisfying $\|\xi_0\|_{L^\infty}\leq M$.
As a consequence,  we obtain an infinite time growth in the Hessian of a smooth vorticity (Corollary \ref{cor:hessian_growth}). 

\begin{proof}[Proof of Corollary \ref{cor:hessian_growth}]
 
	We may take an axi-symmetric non-negative initial datum $\xi_{0} \in C^{\infty}_{c}(\bbR^{3})$ satisfying the assumptions of Theorem \ref{thm:B} with a sufficiently small $\varep>0$. We further impose that \begin{itemize}
		\item $\xi_{0} \le 1$, \item 
$  \supp(\xi_{0}-\xi_{H}) \subset \left(B(1+\dlt')\setminus B(1-\dlt')\right)$		
		with $|\supp(\xi_{0})| \le 10$, where $|\cdot|$ is the Lebesgue measure of sets in $\mathbb{R}^3$, and
		\item $\xi_{0}(r,z) = 1$ when  $r=0$ and  $-1-(\dlt'/2) \le z \le 0$. 
	\end{itemize} Then, since the corresponding smooth solution $\xi$ is constant along particle trajectories, applying Theorems \ref{thm:fila-patch} and \ref{thm:fila-patch-2} with $z_{0} = -1-(\dlt'/2)$ and $z_{0} = 0$ respectively (in the case of general data), we have that the solution $\xi(t,\cdot)$ satisfies \begin{equation*}
	\begin{split}
	\xi(t,\cdot)=1 
	\end{split}
\end{equation*} on an interval of length $\gtrsim t$ lying on the symmetry axis $r=0$. However, since $|\supp(\xi(t,\cdot))|  = |\supp(\xi_{0})|  \le 10$, there exists some $z^*=z^*(t)$ for each $t\geq0$ such that $\xi(t,(0,z^*))=1$ but $\xi(t,(\tilde{r},z^*))=0$ for some $ { \tilde{r}(t) \gtrsim 1/\sqrt{t}}$. Then by the mean value theorem, we have that $$-\rd_{r}\xi(t,(r^*,z^*)) \gtrsim   { \sqrt{t}}$$ for some $0<r^*(t)\le \tilde{r}(t)$.\\

 Next, from $(\rd_r \omg^\tht)(r=0)=\xi(r=0)$, we have that \begin{equation*}
		\begin{split}
			-\rd_r\xi + \left( \frac{\xi(r=0)}{r} - \frac{\xi}{r} \right) = \frac{\rd_r\omg^\tht (r=0) - \rd_r\omg^\tht}{r}. 
		\end{split}
	\end{equation*} We fix some $t>0$ and evaluate both sides at $(r^*,z^*)$. The right hand side is bounded in absolute value by $\nrm{ \rd_{rr} \omg^{\tht}(t,\cdot)}_{L^{\infty}}$, whereas the left hand side is bounded from below by $-\rd_r\xi$ (we have used that $\xi(t,\cdot) \le 1$ and $\xi(t,(0,z^*))=1$), we conclude \begin{equation}\label{eq:vort-Hessian-growth}
	\begin{split}
		\nrm{ \rd_{rr} \omg^{\tht}(t,\cdot)}_{L^{\infty}}  \ge c  {\sqrt{t}},\quad t\geq0. 
	\end{split}
\end{equation}     Then, it is not difficult to see using $\omg = \omg^\tht e_\tht$ and \eqref{eq:vort-Hessian-growth} that the corresponding vorticity satisfies $\omg(t,\cdot) \in C^{\infty}_{c}(\bbR^{3})$ and  \begin{equation*}
		\begin{split}
			\nrm{ \nb^2 \omg (t,\cdot)}_{L^{\infty}(\mathbb{R}^3)} \ge c {\sqrt{t}}, \quad t\ge 0, 
		\end{split}
	\end{equation*} where $\nb^2$ is taken in the Cartesian coordinates in $\mathbb{R}^3$. 
\end{proof}

\section*{Acknowledgement}

\noindent KC has been supported by the National Research Foundation of Korea (NRF-2018R1D1A1B07043065) and by the UBSI Research Fund(1.219114.01) of UNIST. IJ has been supported by the Samsung Science and Technology Foundation under Project Number SSTF-BA2002-04 and the New Faculty Startup Fund from Seoul National University. 

\ \\ 
\bibliographystyle{abbrv}
\bibliography{bib_Instability_0203_2022_hill}

\end{document}